\documentclass[12pt,reqno,a4paper]{amsart}
\usepackage{amsmath,amssymb,amsthm,amsaddr}
\usepackage{enumitem}
\usepackage[margin=2.5cm,top=3.5cm,bottom=2.9cm,footskip=1cm,headsep=1cm]{geometry}
\usepackage[utf8]{inputenc}
\usepackage{graphicx}
\usepackage{tikz}
\usepackage{mathtools}
\usetikzlibrary{arrows}

\author{H. Egger \and T. Kugler}
\address{Department of Mathematics, TU Darmstadt, Germany}
\email{egger@mathematik.tu-darmstadt.de}
\email{kugler@mathematik.tu-darmstadt.de}

\title[Damped wave systems on networks]{Damped wave systems on networks:\\Exponential stability and uniform approximations}


\newtheorem{lemma}{Lemma}[section]
\newtheorem{problem}[lemma]{Problem}
\newtheorem{theorem}[lemma]{Theorem}

\theoremstyle{definition}
\newtheorem{remark}[lemma]{Remark}
\newtheorem*{example*}{Example}

\def\div{\mathrm{div}}

\def\dt{\partial_t}
\def\dx{\partial_x}
\def\dtt{\partial_{tt}}

\def\u{u}

\def\RR{\mathbb{R}}

\def\eps{\varepsilon}

\def\E{\mathcal{E}}
\def\l{l}
\def\G{\mathcal{G}}

\def\V{\mathcal{V}}
\def\Vi{{\V_{0}}}
\def\Vb{{\V_{\partial}}}

\numberwithin{equation}{section}
\numberwithin{table}{section}
\numberwithin{figure}{section}

\begin{document}

\begin{abstract} 
We consider a damped linear hyperbolic system modelling
the propagation of pressure waves in a network of pipes.
Well-posedness is established via semi-group theory
and the existence of a unique steady state is proven in the absence of driving forces.
Under mild assumptions on the network topology and the model parameters, 
we show exponential stability and convergence to equilibrium. 
This generalizes related results for single pipes and multi-dimensional domains to the network context. 
Our proof of the exponential stability estimate is based on a variational formulation of the problem, 
some graph theoretic results, and appropriate energy estimates.
The main arguments are rather generic and can be applied also for the analysis of Galerkin approximations.
Uniform exponential stability can be guaranteed for the resulting semi-discretizations under mild compatibility 
conditions on the approximation spaces. 
A particular realization by mixed finite elements is discussed
and the theoretical results are illustrated by numerical tests in which also bounds for the decay 
rate are investigated. 
\end{abstract}

\maketitle

\vspace*{-1em}

\begin{quote}
\noindent 
{\small {\bf Keywords:} 
damped wave equation, 
differential equations on networks,
exponential stability, 
Galerkin methods,
uniform error estimates}
\end{quote}

\begin{quote}
\noindent
{\small {\bf AMS-classification (2000):}
35L05, 35L50, 65L20, 65M60}
\end{quote}

\section{Introduction} \label{sec:intro}

We consider the propagation of pressure waves in a network of pipes. 
On every single pipe $e$, the dynamics shall be described by the 
linear damped hyperbolic system
\begin{align*}
b^e \dt p^e + \dx u^e &= 0 \\
c^e \dt u^e + \dx p^e &= - a^e u^e.
\end{align*}
Here $p^e$ and $u^e$ denote the pressure and mass flux, respectively, and $a^e$, $b^e$, $c^e$ are positive parameters that reflect the properties of the pipe, e.g. length, 
cross-section, or roughness, and the properties of the fluid, like density or speed of sound.
The two differential equations model, respectively, the conservation of mass and the balance of momentum in the pipe $e$. 
In order to retain these physical principles also across junctions $v$ in the network, 
the mass fluxes into and the sum of forces at the junction have to balance appropriately. 
This can be phrased as algebraic coupling conditions
\begin{align*}
\sum_{e \in \E(v)} n^e(v) u^e(v) = 0 \qquad  &\qquad \text{for all } v \in \Vi \qquad \text{and} \\[-2ex]
p^e(v) = p^{e'}(v) & \qquad \text{for all } e,e' \in \E(v), \ v \in \Vi.
\end{align*}
Here $\Vi$ denotes the set of junctions $v$ in the interior of the network, 
$\E(v)$ is the set of pipes meeting at $v$,
and $n^e(v)$ takes the values minus one or one, depending on whether the pipe $e$ 
starts or ends at $v$. 
At the boundary of the network, i.e. at pipe ends $v$ not meeting at a junction, 
we assume for simplicity that the pressure is zero, i.e.,
\begin{align*}
p^e(v)=0, \qquad  v \in \Vb,
\end{align*}
where $\Vb$ denotes the set of all pipe ends $v$ at the boundary.
Inhomogeneous right hand sides or more general coupling and boundary conditions can be treated similarly.

\medskip 

The above system of differential and algebraic equations describes the evolution of pressure waves in a pipe network or the vibrations of 
a network of strings. Problems of similar structure also describe networks of electric transmission lines \cite{GoettlichHertySchillen15} 
or more general of elastic multi-structures \cite{LagneseLeugeringSchmidt}. 
Related nonlinear problems arise, for instance, in the modeling of gas pipeline networks \cite{BrouwerGasserHerty11} or of electronic circuits \cite{GuentherFeldmannTerMaten05}. 
The well-posedness of the underlying evolution problems uis usually established via semi-group theory. 
We refer to \cite{DagerZuazua06,LagneseLeugeringSchmidt,MehmetiBelowNicaise,Mugnolo14} for a collection of results concerning the modelling, analysis, and control of partial differential equations on networks. 

In general, such hyperbolic systems are governed by certain physical principles, e.g., the conservation of mass 
or the balance of momentum and energy, and dissipation or damping mechanisms lead to stability of the system. 
Depending on the topology of the network, resonances may in general occur, even in the presence of damping
\cite{DagerZuazua06,LagneseLeugeringSchmidt}.
As we will show, such problematic cases can however not arise for the damped hyperbolic system considered here.

\bigskip 

In the first part of the paper, we present a detailed stability analysis of 
the problem. 
Although the damping mechanism effectively dissipates only kinetic energy, 
one can show that, in the absence of driving forces, 
also the total energy eventually decreases, i.e., 
\begin{align*}
\sum\nolimits_e \|u^e(t)\|^2_{L^2(e)} &+  \|p^e(t)\|^2_{L^2(e)}  \\
&\le C e^{-\gamma (t-s)} \sum\nolimits_e\big( \|u^e(s)\|^2_{L^2(e)} + \|p^e(s)\|^2_{L^2(e)}\big), 
\end{align*}
for some $C$ and $\gamma>0$. 
Tthe energy thus decays exponentially to zero, and 
for time independent excitation, the system approaches steady state exponentially fast. 
Such stability estimates are well-known for damped wave equations on domains in one and multiple dimensions; 
see e.g. \cite{BabinVishik83,CoxZuazua94,Lagnese83,RauchTaylor74,Zuazua05}.
The first main result of this paper is to prove the exponential stability also in the network context. 
Let us mention that similar considerations are also of interest for the control of networks \cite{DagerZuazua06,LagneseLeugeringSchmidt,Zuazua05} and for the systematic numerical approximation \cite{BanksItoWang91,ErvedozaZuazua09,Fabiano01,RinconCopetti13,TebouZuazua03}. 

Our proof of the energy decay estimate above is follows the arguments of \cite{EggerKugler15} used for a since pipte 
and is based on the following generic ingredients: 
some graph theoretic results that allow us to proof well-posedness of the corresponding stationary problem; 
a generalized Poincar\'e inequality for certain function spaces defined on the network; 
a variational characterization of solutions to the stationary and instationary problem; 
and a decay estimate for a modified energy which serves as a Lyapunov function for the evolution. 
This last step utilizes an argument proposed originally in \cite{BabinVishik83}.

\bigskip 

In the second part of the manuscript, we investigate the systematic numerical approximation of the model problem 
by Galerkin methods, extending the ideas of \cite{EggerKugler15} for a single pipe to the network context.
Under a mild compatibility condition for the approximation spaces, we can establish the well-posedness of the Galerkin 
discretization for the stationary problem as well as the exponential stability estimate for the discretization of the evolution 
problem. The same decay rate $\gamma$ as for the continuous case can be chosen, which implies that our results are 
uniform, i.e., independent of the discretization level.
For illustration, we discuss a particular method based on the approximation by mixed finite elements, 
for which we derive mesh independent stability and convergence results. 
The exponential stability can be preserved also on the fully discrete level if appropriate time stepping schemes are used \cite{EggerKugler15}.
In summary, we thus obtain a family of uniformly exponentially stable discrete approximations for the problem under investigation.

\bigskip 

The remainder of the manuscript is organized as follows: 
In Section~\ref{sec:prelim}, we introduce the relevant notation.
In Section~\ref{sec:problem}, we state the problem under investigation in more detail 
and summarize our main analytical results. 
Proofs are given in Sections~\ref{sec:stat} and \ref{sec:instat}.
Sections~\ref{sec:stath} and \ref{sec:semi} are concerned with the Galerkin approximation 
of the stationary and the instationary problem,
and in Section~\ref{sec:fem}, we present the approximation by mixed finite elements. 
This discretization is used to illustrate our theoretical results by some numerical tests in Section~\ref{sec:num}. 
We conclude with a short discussion of our results and mention some open problems that require further research.

\section{Preliminaries and notation} \label{sec:prelim}

Let us start with recalling some elementary notations from graph theory \cite{Berge,Mugnolo14} 
that will allow us to give a convenient formulation of the problem under investigation.

\subsection{Topology}

Let $\G=(\V,\E)$ be a finite directed graph with set of vertices denoted by $\V=\{v_1,\ldots,v_n\}$ and set of edges $\E=\{e_1,\ldots,e_m\} \subset \V \times \V$. For obvious reasons we always assume that $\G$ is connected.  
To every vertex $v \in \V$ we associate a set of edges $\E(v)=\{ e=(v,\cdot) \text{ or } e=(\cdot,v)\}$ incident on $v$. 
We further denote by $\Vi=\{v : |\E(v)| \ge 2\}$ and $\Vb = \V \setminus \Vi$ the set of inner and boundary vertices. 
For every edge $e \in \E$, we define an incidence vector $(n^e)_{v \in \V}$ by 
\begin{align*}
n^e(v)=-1 \text{ if } e=(v,\cdot), \qquad 
n^e(v)=1 \text{ if } e=(\cdot,v), \qquad \text{and} \qquad 
n^e(v) = 0 \text{ else}.
\end{align*}
The role of $n^e$ is that of a normal vector for multi-dimensional problems.
The matrix $N \in \RR^{n \times m}$ defined by $N_{ij} = n^{e_j}(v_i)$ is the incidence matrix of the graph. 
For illustration of the above notions, consider the simple example given in Figure~\ref{fig:graph}
\begin{figure}[ht!]
\begin{minipage}[c]{.3\textwidth}
\hspace*{-2.5em}
\begin{tikzpicture}[scale=.7]
\node[circle,draw,inner sep=2pt] (v1) at (0,2) {$v_1$};
\node[circle,draw,inner sep=2pt] (v2) at (4,2) {$v_2$};
\node[circle,draw,inner sep=2pt] (v3) at (8,4) {$v_3$};
\node[circle,draw,inner sep=2pt] (v4) at (8,0) {$v_4$};
\draw[->,thick,line width=1.5pt] (v1) -- node[above] {$e_1$} ++(v2);
\draw[->,thick,line width=1.5pt] (v2) -- node[above,sloped] {$e_2$} ++(v3);
\draw[->,thick,line width=1.5pt] (v2) -- node[above,sloped] {$e_3$} ++(v4);
\end{tikzpicture}
\end{minipage}
\caption{\label{fig:graph}Graph $\G=(\V,\E)$ with vertices $\V=\{v_1,v_2,v_3,v_4\}$ and edges $\E=\{e_1,e_2,e_3\}$
defined by $e_1=(v_1,v_2)$, $e_2=(v_2,v_3)$, and $e_3=(v_2,v_4)$. 
Here $\Vi=\{v_2\}$, $\Vb=\{v_1,v_3,v_4\}$, and $\E(v_2)=\{e_1,e_2,e_3\}$, 
and the non-zero entries of the incidence matrix are $n^{e_1}(v_1)=n^{e_2}(v_2)=n^{e_3}(v_2)=-1$ and $n^{e_1}(v_2)=n^{e_2}(v_3)=n^{e_3}(v_4)=1$.} 
\end{figure}
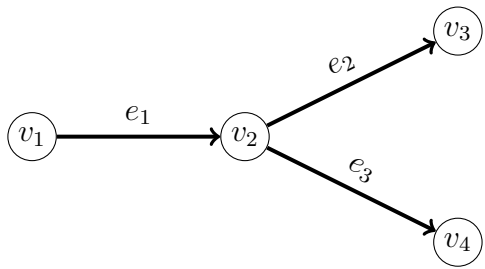

The following elementary property of graphs will be required later on, see e.g. \cite{Berge}.
\begin{lemma} \label{lem:graph}
Let $\G=(\V,\E)$ be a connected graph with incidence matrix $N \in \RR^{n \times m}$. 
Then $N$ has a regular $(n-1) \times (n-1)$ block.
\end{lemma}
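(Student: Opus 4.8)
The plan is to exhibit an explicit nonsingular block by exploiting a spanning tree of the connected graph $\G$. Since $\G$ is connected, it possesses a spanning tree $T \subseteq \E$ consisting of exactly $n-1$ edges and meeting all $n$ vertices. I would then consider the $n \times (n-1)$ submatrix $N_T$ of $N$ formed by the columns indexed by the edges of $T$, and claim that deleting any single row of $N_T$ yields a nonsingular $(n-1)\times(n-1)$ matrix. This submatrix is precisely the ``regular block'' asserted in the lemma.

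First I would fix an arbitrary vertex $v_0 \in \V$ whose row is to be deleted, and denote by $\tilde N_T$ the resulting $(n-1)\times(n-1)$ matrix with rows indexed by $\V \setminus \{v_0\}$ and columns by the edges of $T$. The key structural observation is that every finite tree with at least two vertices has at least two leaves, i.e.\ vertices incident to exactly one edge, so one can always select a leaf $v \neq v_0$. Because $v$ is a leaf, the row of $\tilde N_T$ corresponding to $v$ contains a single nonzero entry, namely $n^e(v) = \pm 1$, where $e$ is the unique edge of $T$ incident to $v$.

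The argument then proceeds by induction on $n$. Laplace expansion of $\det \tilde N_T$ along the row of $v$ reduces the determinant to $\pm \det M$, where $M$ is obtained by deleting the row of $v$ and the column of $e$. One checks that $M$ is exactly the reduced incidence matrix (with row $v_0$ removed) of the tree $T' = T \setminus \{e\}$ on the vertex set $\V \setminus \{v\}$; removing a leaf together with its incident edge preserves connectedness and acyclicity, so $T'$ is a spanning tree on $n-1$ vertices. Since $v_0 \neq v$, the deleted row still corresponds to a genuine vertex of this smaller graph, and the induction hypothesis yields $\det M \neq 0$, whence $\det \tilde N_T \neq 0$. The base case $n=1$ (an empty $0\times 0$ block, trivially regular) is immediate.

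I expect the delicate points to be bookkeeping rather than conceptual: one must ensure the leaf is chosen distinct from $v_0$ (guaranteed by the two-leaves property) and verify that the minor $M$ really is the reduced incidence matrix of the contracted tree, so that the induction applies cleanly. As an alternative route, one could argue at the level of rank: each column of $N$ carries exactly one entry $+1$ and one entry $-1$, so $\mathbf{1}^\top N = 0$ forces $\operatorname{rank}(N) \le n-1$, while the linear independence of the spanning-tree columns after deleting a row supplies the matching lower bound, and the existence of a regular block is then equivalent to $\operatorname{rank}(N) = n-1$. I would nonetheless prefer the spanning-tree construction, since it produces the block \emph{explicitly}, which is convenient for the well-posedness arguments of the stationary problem later on.
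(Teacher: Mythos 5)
Your proof is correct and takes essentially the same route as the paper, which (in Remark~\ref{rem:graph}, with a pointer to the graph-theory literature) obtains the regular block by passing to a spanning tree, keeping its $n-1$ columns, and deleting the row of the root vertex. Your leaf-peeling induction simply fills in the determinant computation the paper delegates to the reference, and your observation that \emph{any} row may be deleted is a slight strengthening that in particular covers the paper's later choice of deleting the row of a boundary vertex.
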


\begin{remark} \label{rem:graph}
The result is proven by construction of a spanning tree.
The regular block can then be obtained by eliminating the row corresponding to the root vertex and the columns corresponding 
to the edges not present in the spanning tree. 
If there exists at least one vertex $v \in \Vb$ at the boundary, 
we can choose the root vertex of the spanning tree to lie on the boundary and eliminate it to obtain the regular subblock.. 
\end{remark}

\subsection{Geometry}

To each edge $e \in \E$, we associate a parameter $l^e>0$ representing the length of the corresponding pipe.
Throughout the presentation, we tacitly identify the interval $[0,l^e]$ with the edge $e$ which it coresponds to. 
The values $l^e$ are stored in a length vector $\l=(l^e)_{e \in \E}$.
The triple $\G=(\V,\E,\l)$ is called a \emph{geometric graph} and serves as the basic geometric model for the pipe network. 

\subsection{Function spaces}
The following function spaces defined on the geometric graph $\G=(\V,\E,\l)$
will be required for our analysis later on. 
We denote by
\begin{align*}
L^2(\E) = \{ u : u|_e=u^e \in L^2(e) \quad \forall e \in \E\}
\end{align*}
the space of square integrable functions over the network with norm
\begin{align*}
\|u\|_{L^2(\E)} = (u,u)_\E^{1/2} 
\quad \text{and} \quad 
(u,v)_\E = \sum\nolimits_e (u^e,v^e)_{L^2(e)}.
\end{align*}
For ease of presentation, we also use $\|\cdot\|_{L^2}$ and $\|\cdot\|$ to denote this norm. 
In addition to this basic function space, we will make use of broken Sobolev spaces
\begin{align*}
H^s(\E) = \{ u : u^e \in H^s(e) \quad \forall e \in \E\}. 
\end{align*}
Note that functions in $H^s(\E)$ may in general be discontinuous at interior vertices $v \in \Vi$. 
The broken derivative of a function $u \in H^1(\E)$ is denoted by $\dx' u$ defined by 
\begin{align*}
(\dx' u)|_e = \dx (u|_e) \qquad \text{for all } e \in \E. 
\end{align*}
This allows us to write $H^1(\E) = \{v \in L^2(\E) : \dx' v \in L^2(\E)\}$ with the induced norm
\begin{align*}
\|u\|_{H^1(\E)}^2 = \|u\|_{L^2(\E)}^2 + \|\dx' u\|^2_{L^2(\E)}.
\end{align*}
Similar notation will be used for functions with higher order broken derivatives.
The space $L^2(\E)$ and certain subspaces of $H^1(\E)$ will arise frequently in our analysis.

\section{Definition of the problems and main results} \label{sec:problem}

For the rest of the presentation, the pipe network will always be represented by a geometric graph $\G=(\V,\E,\l)$ 
satisfying the following conditions.
\begin{itemize}\setlength\itemsep{1ex}
 \item[(A1)] $(\V,\E)$ is a finite connected and directed graph.
 \item[(A2)] $\Vb \ne \emptyset$, i.e., there exists a least one boundary vertex.
 \item[(A3)] All pipes have finite length, i.e., $l^e>0$ for all $e \in \E$.
\end{itemize}
The phyiscal properties of the pipe and the fluid, e.g., the diameter and roughness of the pipe, 
or the density and viscosity of the fluid, are encoded in parameter functions $a,b,c$ defined on $\E$, 
which are assumed to satisfy 
\begin{itemize}\setlength\itemsep{1ex}
 \item[(A4)] $a,b,c \in L^2(\E)$ with $C_0 \le a,b,c \le C_1$ on $\E$ for some constants $C_0,C_1>0$.
\end{itemize}
We are now in the position to give a detailed formulation of the problems 
under investigation and to summarize our main analytical results, which will be stated as theorems. 

\subsection{The instationary problem}

On every edge $e$ of the network, the evolution is described by the following system of differential equations
\begin{align}
c^e \dt u^e + \dx p^e + a^e u^e &= f^e && \text{on } e \in \E, \ t>0, \label{eq:sys1}\\
b^e \dt p^e + \dx u^e &= g^e && \text{on } e \in \E, \ t>0.  \label{eq:sys2}
\end{align}
Here $f^e,g^e$ denote restrictions of appropriate functions $f,g$ defined over the network for time $t>0$ to the edge $e$.
To ensure the conservation of mass and the balance of momentum across junctions, we require the algebraic continuity and conservation conditions 
\begin{align}
p^e(v) &= p^{e'}(v) \qquad \text{for all } e,e' \in \E(v), \ v \in \Vi, \ t>0,  \label{eq:sys3}\\ 
\sum\nolimits_{e \in \E(v)}  n^e(v) u^e(v) &= 0 \qquad \qquad \text{for all } v \in \Vi, \ t>0. \label{eq:sys4}
\end{align}
At the boundary of the network, the pressure shall be prescribed by
\begin{align}
p^e(v) &= 0 \qquad  \text{for } v \in \Vb, \ e \in \E(v), \ t>0. \label{eq:sys5}
\end{align}
Inhomogeneous coupling or boundary conditions could be considered without much difficulty.
The description of the evolution is completed by the initial conditions
\begin{align}
u(0)=u_0 \qquad  p(0)=p_0 \qquad \text{on } \E \label{eq:sys6}.
\end{align}
It will be convenient for the subsequent analysis to include the 
continuity and boundary conditions \eqref{eq:sys3}--\eqref{eq:sys5} into appropriate function spaces. 
Let us therefore define
\begin{align}
H^1_0 &:= \{ p \in H^1(\E) : \eqref{eq:sys3} \text{ and } \eqref{eq:sys5} \text{ hold} \} \\
H(\div) &:= \{ u \in H^1(\E) : \eqref{eq:sys4} \text{ hold}\}.
\end{align}
These spaces are equipped with the norms inherited from $H^1(\E)$, i.e., we set
\begin{align*}
\|p\|_{H^1}^2 = \|p\|_{L^2}^2 + \|\dx' p\|_{L^2}^2 
\quad \text{and} \quad 
\|u\|_{H(\div)}^2 = \|u\|_{L^2}^2 + \|\dx' u\|_{L^2}^2.
\end{align*}
Here $\|\cdot\|_{L^2} = \|\cdot\|_{L^2(\E)}$ is the norm of $L^2(\E)$, for which we briefly write $L^2$ in the sequel.

\begin{remark}
The above notation is inspired by acoustic wave propagation in multiple space dimensions.
Note that functions $p \in H_0^1$ are \emph{continuous across junctions} $v \in \Vi$. 
The fluxes $u \in H(\div)$ may be termed \emph{conservative at junctions}, 
accordingly. 
\end{remark}
The unique solvability of the instationary problem can now be formulated as follows. 
\begin{lemma}[Well-posedness] 
Let (A1)--(A4) hold and $T>0$. 
Then for $u_0 \in H(\div)$, $p_0 \in H^1_0$, and 
$f,g \in W^{1,1}(0,T;L^2(\E))$, there exists a unique solution
\begin{align*}
(u,p) \in C^1([0,T];L^2 \times L^2) \cap C([0,T]; H(\div) \times H^1_0) 
\end{align*}
of the system \eqref{eq:sys1}--\eqref{eq:sys6} and its norm depends continuously on the norm of the data. 
Such a function $(u,p)$ is called \emph{classical solution} of the initial boundary value problem. \end{lemma}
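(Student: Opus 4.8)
The plan is to recognize this as a standard semigroup well-posedness result and cast the first-order system \eqref{eq:sys1}--\eqref{eq:sys2} in the abstract Cauchy form $\frac{d}{dt} w = Aw + F$, where $w=(u,p)$ and $F=(f,g)$, working on the Hilbert space $X = L^2 \times L^2$ equipped with the \emph{weighted} inner product
\begin{align*}
\langle (u,p),(\tilde u,\tilde p)\rangle_X = (c\,u,\tilde u)_\E + (b\,p,\tilde p)_\E,
\end{align*}
which is equivalent to the standard $L^2\times L^2$ norm by assumption (A4). The operator is $A(u,p) = \big(-c^{-1}(\dx p + a u),\, -b^{-1}\dx u\big)$ with domain
\begin{align*}
D(A) = H(\div) \times H^1_0,
\end{align*}
so that the coupling, continuity, and boundary conditions \eqref{eq:sys3}--\eqref{eq:sys5} are encoded directly in the domain. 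The regularity class claimed in the statement is exactly the one delivered by the Lumer--Phillips theorem together with the inhomogeneous-term regularity $f,g\in W^{1,1}(0,T;L^2)$, so the whole task reduces to verifying the hypotheses of that theorem for $A$.

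The key steps are as follows. First I would check that $A$ is \emph{dissipative} in the weighted inner product: computing $\langle A w, w\rangle_X$ for $w=(u,p)\in D(A)$ gives, after using the weights to cancel the $c,b$ factors, the expression $-(\dx p, u)_\E - (a u,u)_\E - (\dx u, p)_\E$; integrating $(\dx p,u)_\E + (\dx u,p)_\E$ by parts edge-by-edge produces boundary terms $\sum_{e}\big[p^e u^e\big]_{\partial e}$, which reorganize into vertex contributions of the form $\sum_{v} p(v)\sum_{e\in\E(v)} n^e(v) u^e(v)$. Here the continuity condition \eqref{eq:sys3} lets $p$ be pulled out of the inner sum at each $v\in\Vi$, the conservation condition \eqref{eq:sys4} then kills that sum, and the boundary condition \eqref{eq:sys5} kills the contributions at $v\in\Vb$; hence the boundary terms vanish and $\langle Aw,w\rangle_X = -(au,u)_\E \le 0$, confirming dissipativity. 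This vertex bookkeeping is the genuinely network-specific computation and is where the coupling conditions earn their place in the domain. Second I would verify the \emph{range condition} $\mathrm{Range}(I-A)=X$: given $(\hat u,\hat p)\in X$, I must solve $(I-A)(u,p)=(\hat u,\hat p)$, i.e. a stationary coupled system for $(u,p)\in D(A)$. I would recast this weakly, testing against functions in $H^1_0$ and $H(\div)$, and invoke coercivity of the resulting bilinear form on the appropriate product space to get a unique solution via Lax--Milgram; this is where the graph-theoretic content (Lemma~\ref{lem:graph}) and the generalized Poincaré inequality announced in the introduction underwrite solvability of the stationary problem and thus guarantee surjectivity.

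Having established that $A$ is densely defined, closed, dissipative, with $I-A$ surjective, the Lumer--Phillips theorem yields a contraction $C_0$-semigroup $e^{tA}$ on $X$. For the inhomogeneous problem I would then apply the standard result that for $w_0=(u_0,p_0)\in D(A)$ and $F=(f,g)\in W^{1,1}(0,T;X)$ the mild solution $w(t)=e^{tA}w_0+\int_0^t e^{(t-s)A}F(s)\,ds$ is in fact a classical solution, lying in $C^1([0,T];X)\cap C([0,T];D(A))$, which is precisely the asserted regularity once $X$ and $D(A)$ are identified with $L^2\times L^2$ and $H(\div)\times H^1_0$; continuous dependence on the data follows from the contraction estimate $\|w(t)\|_X \le \|w_0\|_X + \int_0^t\|F(s)\|_X\,ds$ together with norm equivalence. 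The main obstacle is the range condition, since verifying surjectivity of $I-A$ is exactly the well-posedness of the stationary network problem and requires the Poincaré inequality and the spanning-tree/regular-block structure from Lemma~\ref{lem:graph} to rule out the resonant kernels that \cite{DagerZuazua06,LagneseLeugeringSchmidt} warn can otherwise appear; everything else is routine once the weighted inner product is chosen so that dissipativity becomes the clean identity $\langle Aw,w\rangle_X=-(au,u)_\E$.
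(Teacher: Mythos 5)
Your proposal takes exactly the route the paper intends: the paper's own proof is a two-line remark that, once the coupling conditions \eqref{eq:sys3}--\eqref{eq:sys5} are built into $D(A)=H(\div)\times H^1_0$, the result follows from standard semigroup theory, and your write-up (weighted inner product, vertex-wise cancellation of the boundary terms in the dissipativity computation, Lumer--Phillips, and the $W^{1,1}$ classical-solution theorem from Pazy) is a correct and complete expansion of that remark.

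One inaccuracy is worth correcting, though it does not break the argument. You claim the range condition is "exactly the well-posedness of the stationary network problem" and therefore needs Lemma~\ref{lem:graph} and the generalized Poincar\'e inequality. It does not: the resolvent equation $(I-A)(u,p)=(\hat u,\hat p)$ carries the zeroth-order terms $cu$ and $bp$, so after eliminating $p=\hat p - b^{-1}\dx' u$ from the second equation one obtains the problem of finding $u\in H(\div)$ with
\begin{align*}
\big((c+a)u,v\big)_\E + \big(b^{-1}\dx' u,\dx' v\big)_\E = (c\hat u,v)_\E + (\hat p,\dx' v)_\E \qquad \forall v\in H(\div),
\end{align*}
whose bilinear form is coercive on $H(\div)$ directly from (A4), with no graph-theoretic or Poincar\'e input; those tools are needed for Theorem~\ref{thm:stat}, i.e.\ for invertibility of $A$ itself (the stationary problem \eqref{eq:stat1}--\eqref{eq:stat5}, where the $bp$ term is absent), not for surjectivity of $I-A$. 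Relatedly, your suggestion to get the range condition from "coercivity of the resulting bilinear form on the appropriate product space" would fail as literally stated: the mixed form $((c+a)u,v)_\E-(p,\dx' v)_\E+(bp,q)_\E+(\dx' u,q)_\E$ is coercive only with respect to the $L^2\times L^2$ norm, not the $H(\div)\times L^2$ norm, so you must either eliminate $p$ as above or invoke a Brezzi-type argument; after Lax--Milgram you then recover $p\in H^1_0$ by the same local regularity and vertex test-function argument the paper uses in Section~\ref{sec:stat}.
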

\begin{proof}
Note that by definition of the function spaces, the coupling and boundary conditions \eqref{eq:sys3}--\eqref{eq:sys5} are satisfied automatically. 
The problem can then be understood as an abstract evolution equation on Hilbert spaces 
and the result follows by application of standard results in semi-group theory; see e.g. \cite{DL5,Evans98,Pazy83}. 
\end{proof}

\begin{remark}
Related well-posedness results for evolution equations on networks can be found for instance in \cite{Below88,Mugnolo14}. Let us note that existence could be established here also via Galerkin approximations. Detailed a-priori estimates will be derived below. 
\end{remark}

\subsection{Stationary problem}

As outlined in the introduction, we are particularly interested in the stability of the evolution and the convergence to equilibirum. Let us therefore consider next the corresponding stationary problem 
\begin{align}
\dx \bar p^e + a^e \bar u^e &= \bar f \qquad \text{on } e \in \E, \label{eq:stat1} \\
\dx \bar u^e &= \bar g \qquad \text{on } e \in \E. \label{eq:stat2}
\end{align}
The bar symbol is used here to denote functions that are independent of time. 
As before, the differential equations on the individual edges $e$ are coupled across junctions $v$ by algebraic conditions
\begin{align}
 \bar p^e(v) &= \bar p^{e'}(v) \qquad \text{for all } e,e' \in \E(v), \ v \in \Vi,  \label{eq:stat3}\\ 
\sum\nolimits_{e \in \E(v)}  n^e(v) \bar u^e(v) &= 0 \qquad \qquad \text{for all } v \in \Vi, \label{eq:stat4}
\end{align}
modelling conservation of momentum and mass across vertices $v \in \Vi$ in the interior of the network. 
At the boundary, we again require 
\begin{align}
\bar p^e(v)=0  \qquad  \text{for } v \in \Vb, \ e \in \E(v) \qquad \text{for all } v \in \Vb. \label{eq:stat5}
\end{align}
As before, the conditions \eqref{eq:stat3}--\eqref{eq:stat5} can be eliminated 
by the use of appropriate function spaces. 
Well-posedness of the stationary problem can then be stated as follows.
\begin{theorem} [Existence of a unique equilibrium] \label{thm:stat} $ $\\
Let (A1)--(A4) hold. Then for any $\bar f,\bar g \in L^2(\E)$ the stationary problem 
\eqref{eq:stat1}--\eqref{eq:stat5} has a unique solution $(\bar u,\bar p) \in H(\div) \times H_0^1$ and $\|\bar u\|_{H(\div)} + \|\bar p\|_{H^1} \le C \big( \|\bar f\|_{L^2} + \|\bar g\|_{L^2}\big)$.
\end{theorem}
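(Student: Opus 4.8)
The plan is to pass to a mixed variational formulation on $H(\div)\times H_0^1$ and to exploit the graph structure through Lemma~\ref{lem:graph}. Testing \eqref{eq:stat1} with $v\in H(\div)$, testing \eqref{eq:stat2} with $q\in H_0^1$, and integrating by parts on each edge, the decisive point is that the arising vertex terms $\sum_{e\in\E(v)}n^e(v)\,\bar p^e(v)\,v^e(v)$ vanish: at interior vertices because $\bar p$ is continuous there and $v$ satisfies the conservation condition \eqref{eq:stat4}, and at boundary vertices because of \eqref{eq:stat5}. This yields the identity $(\dx'\bar p,v)_\E=-(\bar p,\dx' v)_\E$ and leads to the problem of finding $(\bar u,\bar p)\in H(\div)\times H_0^1$ with
\begin{align*}
(a\bar u,v)_\E-(\bar p,\dx' v)_\E &= (\bar f,v)_\E && \text{for all } v\in H(\div),\\
(\dx'\bar u,q)_\E &= (\bar g,q)_\E && \text{for all } q\in H_0^1 .
\end{align*}

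For the a~priori estimate, which already yields uniqueness and the quantitative bound, I would test with $v=\bar u$ and $q=\bar p$ and add the two identities; the coupling terms cancel and leave $(a\bar u,\bar u)_\E=(\bar f,\bar u)_\E+(\bar g,\bar p)_\E$. The essential tool here is the generalized Poincar\'e inequality $\|q\|_{L^2}\le C_P\,\|\dx' q\|_{L^2}$ on $H_0^1$, which holds because the graph is connected and, by (A2), carries a boundary vertex at which $q$ vanishes, so that continuity across junctions propagates this value through the network. Combined with $\dx'\bar p=\bar f-a\bar u$ it controls $\|\bar p\|_{L^2}$ by $\|\bar f\|_{L^2}+C_1\|\bar u\|_{L^2}$; inserting this, using $a\ge C_0$ from (A4), and absorbing with Young's inequality bounds $\|\bar u\|_{L^2}$ and then $\|\bar p\|_{L^2}$ by $\|\bar f\|_{L^2}+\|\bar g\|_{L^2}$. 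The derivative bounds follow directly from the equations via $\|\dx'\bar u\|_{L^2}=\|\bar g\|_{L^2}$ and $\|\dx'\bar p\|_{L^2}\le\|\bar f\|_{L^2}+C_1\|\bar u\|_{L^2}$, which completes the estimate in the $H(\div)\times H^1$ norm.

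Existence I would obtain by a reduction reflecting the saddle point structure, with the divergence-free fluxes $K=\{v\in H(\div):\dx' v=0\}$ playing the central role. First I would construct a particular flux $\bar u_g\in H(\div)$ with $\dx'\bar u_g=\bar g$ by solving the edgewise ordinary differential equations with free endpoint values and fixing these from the nodal conservation conditions \eqref{eq:stat4}; this is exactly where Lemma~\ref{lem:graph} enters, its regular $(n-1)\times(n-1)$ block---obtained by eliminating the row of a boundary vertex, which exists by (A2)---guaranteeing a solution with $\|\bar u_g\|_{H(\div)}\le C\|\bar g\|_{L^2}$. Writing $\bar u=\bar u_g+u_0$ with $u_0\in K$, the first equation restricted to test functions in $K$ becomes $(a u_0,v)_\E=(\bar f-a\bar u_g,v)_\E$ for all $v\in K$, a coercive problem solvable by Lax--Milgram, since $\|\cdot\|_{H(\div)}=\|\cdot\|_{L^2}$ on $K$ and $a\ge C_0$. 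Finally $\bar p\in H_0^1$ is recovered from $\dx'\bar p=\bar f-a\bar u$; this is solvable because the right-hand side is orthogonal to $K$ by construction of $u_0$, and because $\dx'$ maps $H_0^1$ onto $K^{\perp}$ with closed range, a consequence of the Poincar\'e inequality. The main obstacle throughout is precisely this interplay of the graph-theoretic surjectivity of the broken divergence and the generalized Poincar\'e inequality; once both are in hand, uniqueness, existence, and the stated stability bound follow.
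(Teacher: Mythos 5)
Your argument is correct, but it reaches the theorem by a genuinely different route than the paper. The paper poses the weak problem in $H(\div)\times L^2$ (Problem~\ref{prob:weakstat}), verifies kernel ellipticity and inf-sup stability (Lemma~\ref{lem:infsup}) --- using the same graph-theoretic construction via Lemma~\ref{cor:graph} that you use to build $\bar u_g$ --- invokes Brezzi's splitting lemma, and only afterwards recovers $\bar p \in H_0^1$ by a postprocessing step with specially constructed test functions $\hat v \in H(\div)$ that force continuity at interior junctions and the boundary condition. You instead work in $H(\div)\times H_0^1$ from the outset and replace the Brezzi machinery by an elementary kernel decomposition: a particular solution of $\dx' \bar u_g = \bar g$ (the same edgewise integration plus piecewise-constant correction as in the paper's inf-sup proof), Lax--Milgram on the divergence-free kernel $K = H^0(\div)$, where coercivity is immediate since $\|\cdot\|_{H(\div)} = \|\cdot\|_{L^2}$ there, and recovery of the pressure from $\dx' \bar p = \bar f - a\bar u$ via a closed-range argument. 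Two ingredients of yours do not appear in the paper and would need their own (short, routine) proofs: first, the Poincar\'e inequality $\|q\|_{L^2} \le C_P \|\dx' q\|_{L^2}$ on $H_0^1$ is \emph{not} the paper's Lemma~\ref{lem:poincare}, which is a different inequality on $H(\div)$; yours is valid and follows, under (A1)--(A3), by integrating $\dx' q$ along a path connecting any point to a boundary vertex. Second, the identification $\dx'(H_0^1) = K^\perp$ requires both closedness of the range (which your Poincar\'e inequality gives) and the annihilator computation: $(w,\dx' q)_\E = 0$ for all $q \in H_0^1$ forces $w$ to be edgewise constant and, testing with piecewise linear $q$ having arbitrary interior vertex values, conservative at interior vertices, i.e.\ $w \in K$ --- essentially the same integration by parts as in the paper's Lemma~\ref{lem:equiv}. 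Your a priori estimate, adding the two tested equations to get $(a\bar u,\bar u)_\E = (\bar f,\bar u)_\E + (\bar g,\bar p)_\E$ and closing with the $H_0^1$-Poincar\'e inequality and Young's inequality, is clean and also yields uniqueness by linearity. As for what each approach buys: the paper's inf-sup framework is reused verbatim for the Galerkin discretization (Lemma~\ref{lem:infsuph} in Section~\ref{sec:stath}), so its detour through $\bar p \in L^2$ does double duty, whereas your argument is more self-contained, avoids the a posteriori continuity argument by producing $\bar p \in H_0^1$ with the $H^1$ bound directly, but would have to be reworked to serve the discrete stability analysis.
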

\noindent 
The proof of this result will be given in Section~\ref{sec:stat}.

\subsection{Exponential stability and a-priori estimates}

From a physical point of view one would expect that the pressure waves decay in amplitude with time
in the absence of driving forces, or more generally that the system converges to equilibrium. 
This behaviour is ensured for the mathematical problem by the following stability result.

\begin{theorem}[Exponential stability] \label{thm:stability} $ $\\
Let (A1)--(A4) hold and let $(u,p)$ denote the solution of \eqref{eq:sys1}--\eqref{eq:sys5} with time independent data $f=\bar f$ and $g=\bar g\in L^2(\E)$. 
Moreover, let $(\bar u,\bar p)$ denote the solution of the 
corresponding stationary problem \eqref{eq:stat1}--\eqref{eq:stat5}. 
Then for $t \ge s \ge 0$
\begin{align} \label{eq:est1}
\| u(t) - \bar u\|^2_{L^2} + \| p(t) - \bar p\|^2_{L^2} \le C e^{-\gamma (t-s)}\big( \| u(s) - \bar u\|^2_{L^2} + \| p(s) - \bar p\|^2_{L^2}\big) 
\end{align}
with constants $C,\gamma > 0$ independent of $u$ and $p$. 
Moreover, 
\begin{align} \label{eq:est2}
\|\dt u(t)\|^2_{L^2} + \|\dt p(t)\|^2_{L^2} \le C e^{-\gamma (t-s)}\big( \|\dt  u(s) \|^2_{L^2} + \|\dt p(s)\|^2_{L^2}\big).
\end{align}
\end{theorem}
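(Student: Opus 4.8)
The plan is to reduce the assertion to an exponential decay estimate for a homogeneous system and then prove that estimate by combining the natural energy identity with a Babin--Vishik type modified energy. Since the data $f=\bar f$, $g=\bar g$ are time independent and $(\bar u,\bar p)$ solves \eqref{eq:stat1}--\eqref{eq:stat5}, I first set $\tilde u:=u-\bar u$ and $\tilde p:=p-\bar p$; subtracting the stationary from the instationary equations shows that $(\tilde u,\tilde p)$ satisfies $c\dt\tilde u+\dx'\tilde p+a\tilde u=0$, $b\dt\tilde p+\dx'\tilde u=0$ together with the \emph{homogeneous} coupling and boundary conditions, i.e.\ $\tilde u(t)\in H(\div)$ and $\tilde p(t)\in H^1_0$. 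Writing $E(t):=\tfrac12\|\sqrt c\,\tilde u(t)\|_{L^2}^2+\tfrac12\|\sqrt b\,\tilde p(t)\|_{L^2}^2$, estimate \eqref{eq:est1} is exactly the exponential decay of $E$; and because $\bar u,\bar p$ are time independent, $\dt u=\dt\tilde u$ and $\dt p=\dt\tilde p$, so \eqref{eq:est2} will follow by applying the same decay result to the time-differentiated solution. It therefore suffices to prove exponential decay of $E$ for the homogeneous problem.

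The second step is the basic energy identity. Differentiating $E$ and inserting the two equations gives $E'=-(\dx'\tilde p,\tilde u)_\E-(\dx'\tilde u,\tilde p)_\E-\|\sqrt a\,\tilde u\|_{L^2}^2$. Integrating the first two terms by parts edgewise collects the interface contributions $\sum_{v\in\V}\sum_{e\in\E(v)}n^e(v)\,\tilde u^e(v)\tilde p^e(v)$, which vanish: at $v\in\Vi$ the pressure is single valued so the sum factors as $\tilde p(v)\sum_{e\in\E(v)}n^e(v)\tilde u^e(v)=0$ by \eqref{eq:sys4}, while at $v\in\Vb$ one has $\tilde p^e(v)=0$ by \eqref{eq:sys5}. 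Hence $E'(t)=-\|\sqrt a\,\tilde u(t)\|_{L^2}^2\le 0$. The difficulty is that this only dissipates the flux part $\tilde u$; no decay of $\tilde p$ is visible yet, and $\|\sqrt a\,\tilde u\|_{L^2}^2$ need not control $E$ at a fixed instant.

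To transfer decay to $\tilde p$ I introduce a modified energy. Using Theorem~\ref{thm:stat} with data $\bar f=0$, $\bar g=\tilde p(t)$, I define $\phi(t)\in H(\div)$ by $\dx'\phi=\tilde p(t)$ with $\|\phi\|_{L^2}\le C\|\tilde p\|_{L^2}$, and set $\mathcal F:=E-\delta\,(c\tilde u,\phi)_\E$, which satisfies $\tfrac12 E\le\mathcal F\le\tfrac32 E$ for $\delta$ small. Differentiating and using $c\dt\tilde u=-\dx'\tilde p-a\tilde u$ together with the integration-by-parts identity $(\dx'\tilde p,\phi)_\E=-(\tilde p,\dx'\phi)_\E=-\|\tilde p\|_{L^2}^2$ (the interface terms again cancel because $\phi\in H(\div)$ is conservative and $\tilde p\in H^1_0$), I obtain
\[
\mathcal F'=-\|\sqrt a\,\tilde u\|_{L^2}^2-\delta\|\tilde p\|_{L^2}^2+\delta\,(a\tilde u,\phi)_\E-\delta\,(c\tilde u,\dt\phi)_\E .
\]
The term $(a\tilde u,\phi)_\E$ is harmless, being bounded by $\tfrac12\|\tilde p\|_{L^2}^2+C\|\tilde u\|_{L^2}^2$ via Young and $\|\phi\|\le C\|\tilde p\|$.

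The hard part, and the main obstacle, is the term $(c\tilde u,\dt\phi)_\E$. Since $\phi$ is the image of $\tilde p$ under the (linear, time-independent) stationary solution operator $S$, one has $\dt\phi=S(\dt\tilde p)$ with $\dt\tilde p=-b^{-1}\dx'\tilde u$, so $\dt\phi$ involves a \emph{spatial derivative} of $\tilde u$ and is a priori not controlled by the $L^2$ energy. The resolution is to measure $\dt\tilde p$ in a dual norm: because $\dx'\colon L^2\to H^{-1}$ is bounded (the interface terms in $\langle\dx'\tilde u,v\rangle=-(\tilde u,\dx'v)_\E$ vanish since the test functions $v$ lie in $H^1_0$), we get $\|\dt\tilde p\|_{H^{-1}}\le C\|\tilde u\|_{L^2}$, while $S$ gains one derivative, i.e.\ $\|\dt\phi\|_{L^2}=\|S(\dt\tilde p)\|_{L^2}\le C\|\dt\tilde p\|_{H^{-1}}\le C\|\tilde u\|_{L^2}$. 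This $H^{-1}\to L^2$ smoothing bound for $S$ is the one auxiliary estimate I must add to Theorem~\ref{thm:stat}, by a duality argument. Granting it, $|(c\tilde u,\dt\phi)_\E|\le C\|\tilde u\|_{L^2}^2$, and choosing $\delta$ small enough that the $C\delta\|\tilde u\|^2$ contributions are absorbed into $-\|\sqrt a\,\tilde u\|^2\le -C_0\|\tilde u\|^2$ yields $\mathcal F'\le-\tfrac{C_0}{2}\|\tilde u\|_{L^2}^2-\tfrac{\delta}{2}\|\tilde p\|_{L^2}^2\le-\gamma\mathcal F$. Gronwall then gives $E(t)\le C e^{-\gamma(t-s)}E(s)$, which is \eqref{eq:est1}. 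Finally, differentiating the homogeneous system in time shows that $(\dt\tilde u,\dt\tilde p)=(\dt u,\dt p)$ solves the same homogeneous problem with the same homogeneous coupling and boundary conditions; applying the estimate just proven—with the \emph{same} rate $\gamma$—delivers \eqref{eq:est2}, the required time differentiation being justified by the regularity in the well-posedness result and a density argument.
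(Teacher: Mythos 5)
Your overall strategy is sound and is organized genuinely differently from the paper's proof. The paper proves \eqref{eq:est2} \emph{first}, working with the energy of $(\dt u,\dt p)$ and the Babin--Vishik corrector $\eps(c\,\dt u,u)_\E$, whose size and dissipation are controlled through the generalized Poincar\'e inequality of Lemma~\ref{lem:poincare} together with Lemma~\ref{lem:poincare2} (via the projection $\Pi^0$ onto $H^0(\div)$ and a compactness/equivalent-norms argument); it then obtains \eqref{eq:est1} by applying \eqref{eq:est2} to the integrated variables $U(t)=\int_0^t (u-\bar u)\,ds-\tilde u$, $P(t)=\int_0^t (p-\bar p)\,ds-\tilde p$. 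You reverse the order: you prove \eqref{eq:est1} directly for $(\tilde u,\tilde p)=(u-\bar u,p-\bar p)$ with the corrector $-\delta(c\tilde u,\phi)_\E$, $\dx'\phi=\tilde p$, and deduce \eqref{eq:est2} by time differentiation. These are two faces of one argument: in the paper's route the corrector for $(U,P)$ is $\eps(c\,\dt U,U)_\E=\eps(c\tilde u,U)_\E$ where, by the second equation, $\dx' U=-b\tilde p$, i.e.\ a \emph{$b$-weighted} primitive of $\tilde p$ --- the analogue of your $\phi$. Your version trades the paper's Poincar\'e lemma for quantitative mapping properties of the stationary solution operator, which is legitimate and arguably more constructive.

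There is, however, a genuine flaw in the one estimate you flag as needing proof. You claim $\|\dt\tilde p\|_{H^{-1}}\le C\|\tilde u\|_{L^2}$ from $\dt\tilde p=-b^{-1}\dx'\tilde u$ and boundedness of $\dx':L^2\to H^{-1}$. The operator $\dx'$ is indeed bounded in this sense, but multiplication by $b^{-1}$ is \emph{not} bounded on $H^{-1}=(H^1_0)'$ under (A4): pairing $b^{-1}\dx'\tilde u$ with $v\in H^1_0$ and integrating by parts would require $b^{-1}v$ to lie in $H^1(\E)$ \emph{and} to be single-valued at interior vertices; if $b$ jumps across a junction (as it does in the paper's own test network, $b=(4,4,1,1,1,4,4)$), the vertex terms $\sum_{e\in\E(v)}n^e(v)\,\tilde u^e(v)\,b^e(v)^{-1}v(v)$ do not cancel, since conservativity gives $\sum_{e\in\E(v)} n^e(v)\tilde u^e(v)=0$ but not $\sum_{e\in\E(v)} n^e(v)\tilde u^e(v)/b^e(v)=0$. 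So the chain $\|\dt\phi\|_{L^2}\le C\|\dt\tilde p\|_{H^{-1}}\le C\|\tilde u\|_{L^2}$ breaks for admissible coefficients. The fix stays entirely within your strategy: define $\phi$ by $\dx'\phi=b\tilde p$ instead. Then $(\dx'\tilde p,\phi)_\E=-(\tilde p,\dx'\phi)_\E=-\|b^{1/2}\tilde p\|_{L^2}^2\le -C_0\|\tilde p\|_{L^2}^2$ still produces the pressure dissipation, while $\dt\phi=S(b\,\dt\tilde p)=-S(\dx'\tilde u)$ admits a direct bound without any dual norm: if $(\dt\phi,\pi')$ denotes the mixed stationary solution with data $(0,-\dx'\tilde u)$, then $(a\,\dt\phi,\dt\phi)_\E=-(\pi',\dx'\tilde u)_\E=(\dx'\pi',\tilde u)_\E=-(a\,\dt\phi,\tilde u)_\E\le C_1\|\dt\phi\|_{L^2}\|\tilde u\|_{L^2}$, where the vertex terms vanish because $\pi'\in H^1_0$ is single-valued and $\tilde u$ is conservative, and where $\dx'\pi'=-a\,\dt\phi$ by \eqref{eq:pbar}; hence $\|\dt\phi\|_{L^2}\le (C_1/C_0)\|\tilde u\|_{L^2}$ with no smoothness of $b$ required. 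Note that this weighted primitive is exactly what the paper's integrated-variable trick produces automatically ($\dx'U=-b\tilde p$). With this replacement your proof closes: the energy identity, the equivalence of $E$ and $\mathcal F$ for small $\delta$, the absorption via Young, Gronwall, and the time-differentiation step for \eqref{eq:est2} (with the same rate $\gamma$, justified for regular data and extended by density, exactly as the paper implicitly does in \eqref{eq:weakt1}--\eqref{eq:weakt2}) are all correct.
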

The proof of this theorem will be given given in Section~\ref{sec:instat}.
As an immediate consequence of the stability estimate, we obtain the following uniform a-priori estimates.
\begin{theorem}[Uniform a-priori estimate] \label{thm:apriori} $ $\\
Let (A1)--(A4) hold and let $(u,p)$ be a solution of \eqref{eq:sys1}--\eqref{eq:sys5}.
Then for $t \ge s \ge 0$
\begin{align}
\|u(t)\|^2 + \|p(t)) \|^2  \label{eq:est}
&\le C' e^{-\gamma (t-s)} \big( \|u(s)\|^2 + \|p(s)\|^2\big) \\
& \qquad \qquad \qquad \qquad + C'' \int_s^t e^{-\gamma (t-r)} \big( \|f(r)\|^2 + \|g(r)\|^2\big) \; dr \notag
\end{align}
with constants $\gamma,C',C''>0$ independent of $s,t$, and of the data $f,g$.
\end{theorem}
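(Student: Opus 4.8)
The plan is to combine the homogeneous decay furnished by Theorem~\ref{thm:stability} with a variation-of-constants (Duhamel) representation of the solution. First I would rewrite the system \eqref{eq:sys1}--\eqref{eq:sys5} as an abstract evolution equation
\[
\dt z = A z + h, \qquad z=(u,p), \quad h=(f/c,\,g/b), \quad F=(f,g),
\]
on the Hilbert space $X:=L^2\times L^2$, where $A$ is the generator of the $C_0$-semigroup $(S(\tau))_{\tau\ge0}$ associated with the homogeneous problem ($f=g=0$); its existence is a consequence of the well-posedness lemma. Applying Theorem~\ref{thm:stability} with $f=g=0$ -- for which Theorem~\ref{thm:stat} forces $\bar u=\bar p=0$ -- yields $\|S(\tau)z_0\|^2\le C e^{-\gamma\tau}\|z_0\|^2$, first for $z_0$ in the (dense) domain and then, by density, for all $z_0\in X$; equivalently $\|S(\tau)\|_{X\to X}\le C^{1/2}e^{-\gamma\tau/2}$. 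Here I use that, by (A4), the weighted norm built from $b,c$ is equivalent to $\|\cdot\|$, so the decay transfers to the unweighted $L^2$ norm up to a change of $C$.

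Next I would invoke Duhamel's formula
\[
z(t)=S(t-s)\,z(s)+\int_s^t S(t-r)\,h(r)\,dr,
\]
valid for the classical solution supplied by the well-posedness lemma. Taking norms, using the operator bound above together with $\|h(r)\|\le C_0^{-1}\|F(r)\|$ from (A4), gives
\[
\|z(t)\|\le C^{1/2}e^{-\gamma(t-s)/2}\|z(s)\|+C^{1/2}C_0^{-1}\int_s^t e^{-\gamma(t-r)/2}\|F(r)\|\,dr.
\]

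To reach the quadratic form claimed in \eqref{eq:est}, I would square this inequality, use $(\alpha+\beta)^2\le 2\alpha^2+2\beta^2$, and treat the convolution term by Cauchy--Schwarz after splitting the kernel as $e^{-\gamma(t-r)/2}=e^{-\gamma(t-r)/4}\cdot e^{-\gamma(t-r)/4}$. Since $\int_s^t e^{-\gamma(t-r)/2}\,dr\le 2/\gamma$, this produces
\[
\Big(\int_s^t e^{-\gamma(t-r)/2}\|F(r)\|\,dr\Big)^2\le\tfrac{2}{\gamma}\int_s^t e^{-\gamma(t-r)/2}\,\|F(r)\|^2\,dr,
\]
where $\|F(r)\|^2=\|f(r)\|^2+\|g(r)\|^2$, and \eqref{eq:est} follows with suitable $C',C''$. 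The only point requiring care -- and the main (minor) obstacle -- is the bookkeeping of the decay rate: the operator norm of $S$ decays at half the rate of the squared norm, and the Cauchy--Schwarz step costs a further factor, so the rate obtained in \eqref{eq:est} is at most the $\gamma$ of Theorem~\ref{thm:stability} (here $\gamma/2$). Since only the existence of some positive rate is asserted, one simply relabels, bounding the leading term via $e^{-\gamma(t-s)}\le e^{-\gamma(t-s)/2}$ so that both contributions carry the common rate.
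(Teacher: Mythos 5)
Your proposal is correct and takes essentially the same route as the paper, whose proof is precisely the two-step sketch you execute: the homogeneous case follows from Theorem~\ref{thm:stability} (with $\bar u=\bar p=0$ by uniqueness of the equilibrium in Theorem~\ref{thm:stat}), and the inhomogeneous case follows by the variation-of-constants formula. Your additional bookkeeping---the semigroup bound by density, the squaring with Cauchy--Schwarz on the convolution kernel, and the relabelling of the decay rate to $\gamma/2$---merely fills in details the paper leaves implicit, and is sound.
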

\begin{proof}
The result for the case $f=g\equiv 0$ is obtained from Theorem~\ref{thm:stability}.
The estimate for the inhomogeneous case then follows by the variation of constants formula.
\end{proof}

\begin{remark}
The stability and uniform a-priori estimates in particular imply that under assumptions (A1)--(A4), 
no resonances can occur in the pipe network. 
\end{remark}

In the following two sections, we provide the proofs for Theorem~\ref{thm:stat} and \ref{thm:stability}.
After that, we turn to the numerical approximation by Galerkin schemes, for which 
we state and prove similar results. This will form the second part of our manuscript.

\section{Analysis of the stationary problem} \label{sec:stat}

We now consider the well-posedness of the stationary problem \eqref{eq:stat1}--\eqref{eq:stat5} 
and provide a proof of Theorem~\ref{thm:stat}.
We employ a variational formulation of the problem, which later on also serves
as the starting point for the discretization by Galerkin methods.

\subsection{A variational formulation}

As a weak formulation of the stationary problem, we consider the following mixed variational problem.
\begin{problem}[Weak formulation] \label{prob:weakstat}
Find $\bar u \in H(\div)$ and $\bar p \in L^2(\E)$, such that 
\begin{align}
(a \bar u, \bar v)_\E - (\bar p, \dx' \bar v)_\E &= (\bar f, \bar v)_\E \qquad \forall \bar v \in H(\div), \label{eq:weak1stat}\\
(\dx' \bar u, \bar q)_\E &= (\bar g, \bar q)_\E \qquad \ \forall \bar q \in L^2(\E). \label{eq:weak2stat}
\end{align}
\end{problem}
Let us first clarify in detail that this problem is indeed a weak formulation of the stationary problem \eqref{eq:stat1}--\eqref{eq:stat5} under investigation. 
\begin{lemma}[Equivalence] \label{lem:equiv}
Any solution $(\bar u, \bar p) \in H^1(\E) \times H^1(\E)$ of \eqref{eq:stat1}--\eqref{eq:stat5}
also satisfies the system \eqref{eq:weak1stat}--\eqref{eq:weak2stat}. 
If, on the other hand, $(\bar u,\bar p)$ solves Problem~\ref{prob:weakstat} and is sufficiently regular, 
i.e., $(\bar u, \bar p) \in H^1(\E) \times H^1(\E)$,
then $(\bar u,\bar p)$ also solves \eqref{eq:stat1}--\eqref{eq:stat5}.
\end{lemma}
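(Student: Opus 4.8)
The plan is to prove the equivalence in both directions by the standard technique of integration by parts, being careful to track the boundary and junction terms that arise from the broken derivative on the network. The key tool is the edgewise integration-by-parts formula
\begin{align*}
(\dx' \bar v, \bar q)_\E + (\bar v, \dx' \bar q)_\E = \sum\nolimits_{v \in \V} \sum\nolimits_{e \in \E(v)} n^e(v)\, \bar v^e(v)\, \bar q^e(v),
\end{align*}
valid for $\bar v, \bar q \in H^1(\E)$, which encodes how the normal vectors $n^e(v)$ assemble the pointwise traces at each vertex.

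First I would treat the forward direction. Assuming $(\bar u, \bar p)$ is a regular solution of \eqref{eq:stat1}--\eqref{eq:stat5}, I would multiply the momentum equation \eqref{eq:stat1} by a test function $\bar v \in H(\div)$ and integrate edgewise. Moving the $\dx' \bar p$ term via the integration-by-parts formula produces a volume term $-(\bar p, \dx' \bar v)_\E$ plus a boundary contribution $\sum_{v}\sum_{e \in \E(v)} n^e(v)\, \bar p^e(v)\, \bar v^e(v)$. I would then argue this boundary term vanishes: at $v \in \Vb$ it vanishes because $\bar p^e(v)=0$ by \eqref{eq:stat5}; at interior $v \in \Vi$ the continuity condition \eqref{eq:stat3} lets me factor out the common value $\bar p(v)$, leaving $\bar p(v) \sum_{e \in \E(v)} n^e(v)\, \bar v^e(v) = 0$, which is exactly the conservation condition \eqref{eq:sys4}/\eqref{eq:stat4} satisfied by $\bar v \in H(\div)$. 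This yields \eqref{eq:weak1stat}. Equation \eqref{eq:weak2stat} is then immediate: testing \eqref{eq:stat2} against $\bar q \in L^2(\E)$ gives it directly with no boundary terms.

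For the converse I would reverse the computation. Given a sufficiently regular solution of Problem~\ref{prob:weakstat}, equation \eqref{eq:weak2stat} forces $\dx' \bar u = \bar g$ in $L^2(\E)$ since $\bar q$ ranges over all of $L^2(\E)$, recovering \eqref{eq:stat2}. For \eqref{eq:weak1stat}, I would first integrate by parts back into strong form on each edge; testing against $\bar v$ supported on a single edge and vanishing at both endpoints recovers the pointwise momentum equation \eqref{eq:stat1} and hence $a\bar u + \dx' \bar p = \bar f$. Substituting this back leaves the boundary identity $\sum_v \sum_{e \in \E(v)} n^e(v)\, \bar p^e(v)\, \bar v^e(v) = 0$ for all $\bar v \in H(\div)$, from which the junction conditions \eqref{eq:stat3} and the boundary condition \eqref{eq:stat5} must be extracted by choosing suitable test functions.

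The main obstacle is precisely this last extraction step: disentangling the continuity condition \eqref{eq:stat3} and the boundary condition \eqref{eq:stat5} from the single aggregated boundary identity. The difficulty is that $\bar p$ was only assumed to lie in $H^1(\E)$, so a priori it need not be continuous across junctions, and the test space $H(\div)$ already has the conservation constraint \eqref{eq:sys4} built in. I would argue by localizing at one vertex at a time and choosing test functions $\bar v \in H(\div)$ whose traces $(\bar v^e(v))_{e \in \E(v)}$ realize a sufficiently rich set of vectors orthogonal to the constraint $\sum_{e} n^e(v)\,\bar v^e(v)=0$; the annihilator of that single linear constraint then forces the trace vector $(n^e(v)\bar p^e(v))_{e}$ to be a scalar multiple of $(n^e(v))_e$, which after dividing out $n^e(v) = \pm 1$ yields the common value, i.e.\ continuity \eqref{eq:stat3} at interior vertices and the vanishing \eqref{eq:stat5} at boundary vertices where no such constraint is present. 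I expect this to require a careful construction, exploiting that on each edge one can prescribe endpoint values independently within $H^1(e)$, so that the traces on $H(\div)$ span the full constraint-respecting subspace at each vertex.
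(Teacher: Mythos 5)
Your proposal is correct and takes essentially the same route as the paper: the forward direction is the identical edgewise integration by parts with re-summation of the endpoint terms over vertices, where \eqref{eq:stat3}--\eqref{eq:stat5} together with the conservation property of the test function $\bar v \in H(\div)$ make the vertex contributions vanish. For the converse, the paper only says it is ``obtained by reverting the order of the steps,'' and your extraction step at each vertex --- identifying the annihilator of the single constraint $\sum_{e \in \E(v)} n^e(v)\bar v^e(v)=0$ to force $(\,n^e(v)\bar p^e(v)\,)_e$ proportional to $(\,n^e(v)\,)_e$ --- is exactly the localization with piecewise linear, vertex-supported test functions that the paper itself carries out in Section~\ref{sec:stat} when completing the proof of Theorem~\ref{thm:stat}, so you have merely made explicit what the paper leaves implicit.
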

\begin{proof}
Let $(\bar u, \bar p) \in H(\div) \times H_0^1$ be a solution of \eqref{eq:stat1}--\eqref{eq:stat5}.
Then equation \eqref{eq:weak2stat} is obviously satisfied for all test functions $q \in L^2(\E)$. 
Testing \eqref{eq:sys1} with $\bar v \in H(\div)$ yields 
\begin{align*}
(\bar f, \bar v)_\E 
&= (a \bar u, \bar v)_\E -(\bar p, \dx' \bar v)_\E \\
&= (a \bar u, \bar v)_\E + (\dx' \bar p, \bar v)_\E - \sum\nolimits_{e} \bar p(v_r) \bar v(v_r) -\bar p(v_l) \bar v(v_l).
\end{align*}
The topological edge $e=(v_l,v_r)$ was tacitly identified here with its geometric representation $[0,l^e]$.
Exchanging the order of summation allows to express the last term as
\begin{align*}
\sum_{v \in \Vi} \sum_{e \in \E(v)} n^e(v) \bar v(v) \bar p(v) + \sum_{v \in \Vb} n^e(v) \bar v(v) \bar p(v).
\end{align*}
Using the algebraic conditions \eqref{eq:stat3}--\eqref{eq:stat5}, this term can be seen to vanish. 
This shows that any strong solution of \eqref{eq:stat1}--\eqref{eq:stat5} solves the variational principle. 
The other direction is obtained by reverting the order of the steps.
\end{proof}

\subsection{Auxiliary results}

Problem \eqref{eq:weak1stat}--\eqref{eq:weak2stat} has the form of an abstract mixed variational problem and well-posedness can be ensured (only) under the conditions of the Brezzi theory \cite{Brezzi74}.
For the proof of the required stability conditions, we utilize the following result, 
which follows readily from the topological properties of the network. 
\begin{lemma} \label{cor:graph}
Let (A1)--(A2) hold. 
Then for any vector $(\hat u_v)_{v \in \Vi} \in \RR^{|\Vi|}$ of \emph{nodal fluxes} there exists a vector  
$(\hat u^e)_{e \in \E} \in \RR^{|\E|}$ of \emph{constant edge fluxes} such that 
$$
\sum\nolimits_{e \in \E(v)} n^{e}(v) \hat u^e = \hat u_v \qquad \text{for all } v \in \Vi.
$$
Moreover, there holds $\max_e |\hat u^e| \le C_G \max_{v \in \Vi} |\hat u_v|$ with a constant $C_G$ depending only on the topology of the graph.
\end{lemma}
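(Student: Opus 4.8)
The plan is to recast the statement as a solvability-plus-bound assertion for the incidence matrix $N \in \RR^{n\times m}$ introduced above. Writing $\hat u = (\hat u^e)_{e\in\E} \in \RR^{|\E|}$ and collecting the interior-vertex rows of $N$ into a submatrix $N_{\Vi} \in \RR^{|\Vi|\times|\E|}$, the defining identity $\sum_{e\in\E(v)} n^e(v)\hat u^e = \hat u_v$ for $v\in\Vi$ reads $N_{\Vi}\hat u = (\hat u_v)_{v\in\Vi}$. Existence of a constant-edge-flux vector for every prescribed vector of nodal fluxes is therefore equivalent to surjectivity of $N_{\Vi}$, i.e.\ to $N_{\Vi}$ having full row rank $|\Vi|$.

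First I would use (A2) together with Lemma~\ref{lem:graph} and Remark~\ref{rem:graph} to fix a spanning tree $T$ of $\G$ rooted at some boundary vertex $v_0\in\Vb$, and to extract the regular $(n-1)\times(n-1)$ block $B$ obtained by deleting the row of $N$ indexed by $v_0$ and keeping only the columns indexed by the edges of $T$. Because the root $v_0$ lies on the boundary, every interior vertex survives the row deletion, so the rows of $N_{\Vi}$ restricted to the tree-edge columns form a subset of the rows of the invertible matrix $B$; these rows are thus linearly independent, which gives the required full row rank of $N_{\Vi}$ and hence solvability.

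To obtain an explicit solution together with the quantitative estimate, I would seek $\hat u$ supported on the tree edges only, setting $\hat u^e = 0$ for $e\notin T$. With this choice the left-hand side at any non-root vertex $v$ equals the corresponding entry of $B\hat u_T$, where $\hat u_T$ collects the tree-edge values. I would then prescribe a target divergence $d\in\RR^{n-1}$ by $d_v = \hat u_v$ for $v\in\Vi$ and $d_v = 0$ for $v\in\Vb\setminus\{v_0\}$, and solve $\hat u_T = B^{-1}d$. By construction $\sum_{e\in\E(v)} n^e(v)\hat u^e = d_v = \hat u_v$ for every interior vertex, as desired; no constraint is imposed at the boundary vertices, so setting their divergences to zero is harmless, and the divergence at $v_0$ is then fixed automatically. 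For the norm bound, since only tree edges carry flux, $\max_e|\hat u^e| = \|\hat u_T\|_\infty = \|B^{-1}d\|_\infty \le \|B^{-1}\|_\infty\,\|d\|_\infty = C_G \max_{v\in\Vi}|\hat u_v|$, where $C_G := \|B^{-1}\|_\infty$ is the maximal absolute row sum of $B^{-1}$. As $B$ and $T$ depend only on the graph, so does $C_G$, which is exactly the claimed topological dependence.

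The main obstacle is the bookkeeping that identifies the interior-vertex rows of the full incidence matrix with rows of the invertible reduced block $B$; this is where assumption (A2) and the boundary-rooted spanning tree of Remark~\ref{rem:graph} are essential, since they guarantee that the deleted row is a boundary row and that all of $\Vi$ is retained. Everything else --- the reduction to a linear-algebra statement, the explicit tree solution, and the row-sum bound --- is then routine.
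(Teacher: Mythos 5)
Your proposal is correct and follows essentially the same route as the paper: the paper's (very terse) proof likewise derives existence from Lemma~\ref{lem:graph} via the boundary-rooted spanning tree of Remark~\ref{rem:graph}, and obtains the bound from linearity and finite dimensionality. Your version merely makes this explicit --- supporting the solution on the tree edges, solving with the regular block $B$, and exhibiting the concrete constant $C_G = \|B^{-1}\|_\infty$ --- which is a faithful and complete elaboration of the paper's sketch.
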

\begin{proof}
The existence of a solution follows from  Lemma~\ref{lem:graph} taking into account Remark~\ref{rem:graph}. The bound is then obtained by linearity of the problem and the finite dimension.
\end{proof}

We can now verify the conditions required for Brezzi's theorem. 
\begin{lemma}[Kernel ellipticity and inf-sup stability] \label{lem:infsup} $ $\\
Let (A1)--(A4) hold. 
Then the bilinear forms $a(u,v) = (a u, v)_\E$ and $b(u, p) = -(\dx' u, p)_\E$ 
are bounded on $H(\div) \times H(\div)$ and $H(\div) \times L^2$, respectively. 
Moreover, there exist positive constants $\alpha,\beta>0$ such that 
\begin{itemize}\setlength{\itemsep}{1ex} 
 \item[(S1)] \ $(a u, u)_\E \ge \alpha \|u\|_{H(\div)}^2$ for all $u \in H^0(\div):=\{u \in H(\div) : \dx' \u = 0 \}$;
 \item[(S2)] \ $ \sup_{u \in H(\div)} (\dx' u, p)_\E / \|u\|_{H(\div)} \ge \beta \|p\|_{L^2}$ for all $p \in L^2(\E)$.
\end{itemize}
\end{lemma}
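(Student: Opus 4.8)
The plan is to verify the four assertions in order, treating boundedness and kernel ellipticity (S1) as routine, and reserving the main effort for the inf-sup condition (S2), where the graph-theoretic Lemma~\ref{cor:graph} enters. Boundedness of $a(u,v)=(au,v)_\E$ on $H(\div)\times H(\div)$ follows immediately from the Cauchy-Schwarz inequality together with the upper bound $a\le C_1$ from (A4), since $|(au,v)_\E|\le C_1\|u\|_{L^2}\|v\|_{L^2}\le C_1\|u\|_{H(\div)}\|v\|_{H(\div)}$. Boundedness of $b(u,p)=-(\dx' u,p)_\E$ on $H(\div)\times L^2$ is equally direct, as $|b(u,p)|\le\|\dx' u\|_{L^2}\|p\|_{L^2}\le\|u\|_{H(\div)}\|p\|_{L^2}$.

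For the kernel ellipticity (S1), I would exploit the definition of the kernel $H^0(\div)=\{u\in H(\div):\dx' u=0\}$. On this space the full $H(\div)$-norm collapses to the $L^2$-norm, because $\|u\|_{H(\div)}^2=\|u\|_{L^2}^2+\|\dx' u\|_{L^2}^2=\|u\|_{L^2}^2$ whenever $\dx' u=0$. Using the lower bound $a\ge C_0$ from (A4), I then obtain $(au,u)_\E\ge C_0\|u\|_{L^2}^2=C_0\|u\|_{H(\div)}^2$ for all $u\in H^0(\div)$, so that $\alpha=C_0$ suffices. The role of the kernel restriction is precisely to avoid needing to control $\dx' u$ by $\|u\|_{L^2}$, which would be false on all of $H(\div)$.

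The substantive step is the inf-sup condition (S2), and the hard part will be constructing, for a given $p\in L^2(\E)$, a test flux $u\in H(\div)$ that both realizes a sizeable value of $(\dx' u,p)_\E$ and satisfies the conservation constraint \eqref{eq:sys4} built into $H(\div)$. My plan is to solve, on each edge $e=(v_l,v_r)$ identified with $[0,l^e]$, the auxiliary problem $\dx u^e=p^e$ with suitable endpoint data, so that $(\dx' u,p)_\E=\|p\|_{L^2}^2$ while controlling $\|u\|_{H(\div)}$ by $\|p\|_{L^2}$. The obstruction is that the naive edgewise solution need not be conservative at the interior vertices $v\in\Vi$; the boundary terms from integration by parts produce nodal flux mismatches that must be absorbed. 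This is exactly where Lemma~\ref{cor:graph} is invoked: given the prescribed vector of nodal fluxes arising from the construction, that lemma furnishes constant edge fluxes correcting the mismatch, with the crucial uniform bound $\max_e|\hat u^e|\le C_G\max_{v\in\Vi}|\hat u_v|$ depending only on the graph. Adding this correction yields a genuine element of $H(\div)$ whose norm remains controlled by $\|p\|_{L^2}$, giving $\beta>0$ depending only on the topology, the lengths $l^e$, and the constants in (A4).

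I anticipate the main bookkeeping difficulty to lie in tracking the boundary contributions at $\Vb$ versus $\Vi$ when integrating by parts edge by edge, and in verifying that the graph-theoretic correction does not spoil the conservation already achieved while keeping the norm estimate uniform. Once the corrected flux is in hand, the supremum in (S2) is bounded below by evaluating $b$ at this particular $u$ and dividing by its norm, which completes the verification of Brezzi's conditions.
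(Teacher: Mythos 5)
Your proposal follows essentially the same route as the paper: boundedness via Cauchy--Schwarz and (A4), kernel ellipticity with $\alpha=C_0$ since the $H(\div)$-norm reduces to the $L^2$-norm on $H^0(\div)$, and for (S2) the edgewise antiderivative $u_1^e(x)=\int_0^x p^e(s)\,ds$ corrected by a piecewise constant flux supplied by Lemma~\ref{cor:graph} to restore conservation at interior vertices. One small simplification over what you anticipate: no integration by parts or boundary bookkeeping at $\Vb$ is needed, since $H(\div)$ imposes no condition at boundary vertices and the constant correction has vanishing broken derivative, so $\dx' u = p$ is preserved automatically.
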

\begin{proof}
Boundedness is clear from the definition of the norms, the Cauchy-Schwarz inequality, and the bounds for the coefficients in assumption (A4). 
The kernel ellipticity condition (S1) then  holds with $\alpha=C_0$, since 
\begin{align*}
(a u,u)_\E  \ge C_0 \|u\|_\E^2 = C_0 (\|u\|_\E^2 + \|\dx' u\|_\E^2) = C_0 \|u\|_{H(\div)}^2 \qquad \text{for all } u \in H^0(\div). 
\end{align*}
To show the inf-sup condition (S2), we proceed as follows: For every edge $e \in \E$, 
we first define $u_1^e(x) = \int_0^x  p^e(s) ds$. 
Then $u_1 \in H^1(\E)$ with $\dx' u_1 =  p$ and $\|u_1\|_\E + \|\dx' u_1\|_\E \le C \| p\|_\E$. 
The piecewise defined function $u_1$ will however not be conservative, in general. 
This can be corrected by adding a piecewise constant function $u_2$ satisfying
\begin{align*}
\sum_{e \in \E(v)} n^e(v) u_2(v) = - \sum_{e \in \E(v)} n^{e}(v) u_1(v) =: \hat u_v  \qquad \text{for all } v \in \Vi.
\end{align*}
As the following construction shows, such a function $u_2$ in fact exists: 
By Lemma~\ref{cor:graph}, we can find a vector $(\hat u^e)_{e \in \E}$ of constant edge fluxes such that 
$\sum_{e \in \E(v)} n^e(v) \hat u^e = \hat u_v$. 
We then define a piecewise constant function $u_2|_e \equiv \hat u^e$ for $e \in \E$
and the bounds of Lemma~\ref{cor:graph} yield $\|u_2^e\|_{H(\div)}=\|u_2^e\|_{L^2} \le C \|u_1\|_{H(\div)}$. 
By construction, the function $u = u_1 + u_2$ now satisfies $u \in H(\div)$ with $\dx'  u =  p$, 
and it is bounded by $\|u\|_{H(\div)} \le C \| p\|_{L^2}$. 
Using $u$ as test function in (S2) yields the assertion.
\end{proof}

\subsection{Proof of Theorem~\ref{thm:stat}}

Due to the stability estimates provided in Lemma~\ref{lem:infsup}, we can now apply Brezzi's splitting lemma \cite{BoffiBrezziFortin13,Brezzi74}, to obtain
\begin{lemma}[Well-posedness of Problem~\ref{prob:weakstat}]
Let (A1)--(A4) hold. 
Then for any pair of data $\bar f,\bar g \in L^2(\E)$, problem \eqref{eq:weak1}--\eqref{eq:weak2} 
has a unique solution $(\bar u,\bar p) \in H(\div) \times L^2$ and 
\begin{align} \label{eq:aprioristat}
\|\bar u\|_{H(\div)} + \|\bar p\|_{L^2} \le C \big(\|\bar f\|_{L^2} + \|\bar g\|_{L^2}\big) 
\end{align}
with constant $C$ only depending on $\alpha,\beta$ above and the bounds for the coefficients.
\end{lemma}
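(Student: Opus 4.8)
The plan is to recognize Problem~\ref{prob:weakstat} as an abstract mixed (saddle-point) variational problem and to invoke the Brezzi theory directly, since all of its structural hypotheses have already been assembled in Lemma~\ref{lem:infsup}. Concretely, I would pose the problem on the Hilbert spaces $V = H(\div)$ and $Q = L^2(\E)$ in the standard form
\[
a(\bar u,\bar v) + b(\bar v,\bar p) = (\bar f,\bar v)_\E \quad \forall \bar v \in H(\div), \qquad b(\bar u,\bar q) = -(\bar g,\bar q)_\E \quad \forall \bar q \in L^2(\E),
\]
with the bilinear forms $a(u,v) = (a u,v)_\E$ and $b(v,q) = -(\dx' v, q)_\E$ exactly as introduced in Lemma~\ref{lem:infsup}.

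First I would confirm that the kernel of $b$ inside $V$ coincides with the space $H^0(\div) = \{u \in H(\div) : \dx' u = 0\}$ on which ellipticity was proven: indeed $b(u,q) = -(\dx' u, q)_\E = 0$ for all $q \in L^2(\E)$ holds if and only if $\dx' u = 0$. Hence condition (S1) of Lemma~\ref{lem:infsup} is precisely the kernel-ellipticity hypothesis of Brezzi's theorem, with constant $\alpha > 0$, while condition (S2) is exactly the inf-sup (LBB) condition for $b$ with constant $\beta > 0$. Together with the boundedness of $a$ and $b$ (also recorded in Lemma~\ref{lem:infsup}) and the continuity on $V$ and $Q$ of the linear functionals $\bar v \mapsto (\bar f,\bar v)_\E$ and $\bar q \mapsto -(\bar g,\bar q)_\E$, all hypotheses of the Brezzi splitting lemma are in force.

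The conclusion then follows verbatim from \cite{Brezzi74} (see also \cite{BoffiBrezziFortin13}): the mixed problem admits a unique solution $(\bar u,\bar p) \in H(\div) \times L^2(\E)$, and the stability estimate
\[
\|\bar u\|_{H(\div)} + \|\bar p\|_{L^2} \le C\big(\|\bar f\|_{L^2} + \|\bar g\|_{L^2}\big)
\]
holds, where $C$ is expressed explicitly through the ellipticity constant $\alpha$, the inf-sup constant $\beta$, and the boundedness constants of $a$ and $b$ — the latter controlled by the coefficient bounds $C_0, C_1$ from (A4).

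Since the two nontrivial structural hypotheses (kernel ellipticity and inf-sup stability) have already been verified, there is essentially no obstacle remaining at this stage; the only points requiring care are bookkeeping ones, namely matching the sign convention in the second equation (which merely replaces $\bar g$ by $-\bar g$ and does not affect the estimate) and confirming that the ellipticity in (S1) is posed on precisely $\ker b$. The genuinely hard part of the overall argument — the construction of a conservative flux lifting $u$ with $\dx' u = \bar p$ through the graph-theoretic result of Lemma~\ref{cor:graph} — was already carried out in establishing (S2), so the present lemma is an immediate corollary of the abstract theory.
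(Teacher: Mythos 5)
Your proposal is correct and follows exactly the paper's own route: the paper likewise obtains this lemma as an immediate consequence of Brezzi's splitting lemma \cite{Brezzi74,BoffiBrezziFortin13}, with the kernel ellipticity (S1) and inf-sup condition (S2) already supplied by Lemma~\ref{lem:infsup}. Your additional checks (that $\ker b = H^0(\div)$ and the sign convention for $\bar g$) are sound bookkeeping that the paper leaves implicit.
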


To complete the proof of Theorem~\ref{thm:stat}, it only remains to establish that 
the weak solution is sufficiently smooth and satisfies the boundary conditions, 
i.e., that $\bar p \in H^1_0$:
Testing \eqref{eq:weak1} with a smooth function supported only on a single edge $e$, we see that 
\begin{align*}
-(\bar p^e,\dx \phi^e)_e = (\bar f^e, \phi^e)_e - (a^e \bar u^e, \phi^e)_e \qquad \forall \phi_e \in C_0^\infty(e).  
\end{align*}
This shows that $\bar p$ is weakly differentiable on every edge, i.e., $\bar p \in H^1(\E)$, 
and 
\begin{align} \label{eq:pbar}
\dx' \bar p = \bar f -  a\bar u.
\end{align}
This in turn implies the bound $\|\dx' \bar p\| \le C (\|\bar f\| + \|\bar u\|)$.
Next assume that $\bar p$ is not continuous at some interior junction $v \in \Vi$.
Then $\bar p^e(v) \ne \bar p^{e'}(v)$ for some $e,e' \in \E(v)$.  
We now construct a piecewise linear function $\hat v \in H(\div)$, 
such that 
\begin{align*}
n^e(v) \bar v^e(v) + n^{e'}(v) v^{e'}(v) &= 0,  
\quad
n^{e'}(v) \hat v^{e'}(v) = 1, 
\quad \text{and} \quad 
\hat v \equiv 0 \quad \text{on } \E \setminus \{e,e'\}.
\end{align*}
By the previous considerations, we already know that $a \bar u + \dx' \bar p = \bar f$ on $\E$. 
From the variational equation \eqref{eq:weak1stat} with test function $\hat v$ as constructed above, 
we further obtain 
\begin{align*}
0 
&= (\bar f, \hat v)_\E - (a \bar u, \hat v)_\E + (\bar p, \dx'  \hat v)_\E  \\
&= (\bar f, \hat v)_\E - (a \bar u, \hat v)_\E - (\dx' \bar p, \hat v)_\E  
   + p^e(v) n^e(v) \hat v^e(v) + p^{e'}(v) n^{e'}(v) \hat v^{e'}(v).
\end{align*}
The first three terms on the right hand side vanish because of \eqref{eq:pbar},
and the remaining terms can be further rewritten as
\begin{align*}
0 = p^e(v) \big( n^e(v) \hat v^e(v) + n^{e'}(v) \hat v^{e'}(v)\big) + \big(p^e(v) - p^{e'}(v) \big) n^{e'}(v) \hat v^{e'}(v). 
\end{align*}
By construction of the test function $\hat v$, the first term vanishes, but since $\hat v^{e'}(v)=1$, the second does not,
unless $p^e(v) - p^{e'}(v)=0$. This yields a contradiction to the assumption that $\bar p$ is discontinuous at the vertex $v$;
hence $\bar p$ is continuous. 
With similar construction, one can show that 
$\bar p(v)=0$ for $v \in \Vb$, which conlucdes the proof of Theorem~\ref{thm:stat}.
\qed

\section{Analysis for the instationary problem} \label{sec:instat}

Let us now turn to the instationary problem and present the proof of Theorem~\ref{thm:stability}. 
This is accomplished by extending the arguments of \cite{EggerKugler15} to the network context. 

\subsection{Weak formulation}

As for the stationary problem, the variational characterization of the solutions turns out to be advantageous
again. Here we utilize 
\begin{problem}[Weak formulation] \label{prob:weak} 
Find a function $(u,p) \in L^2(0,T;H(\div) \cap L^2)$ with derivatives
$(c\dt u,b\dt p)\in L^2(0,T;H(div)'\times L^2)$ such that $u(0)=u_0$, $p(0)=p_0$, and
\begin{align}
(c \dt u(t),v)_\E - (p(t), \dx' v)_\E + (a u(t), v)_\E = (f(t), v)_\E \label{eq:weak1}\\
(b \dt p(t),q)_\E + (\dx' u(t), q)_\E = (g(t), q)_\E,                 \label{eq:weak2}
\end{align}
for all $v \in H(\div)$ and $q \in L^2$, and a.e. $t \in (0,T)$. 
A function $(u,p)$ satisfying these conditions is called a \emph{weak solution} of the initial boundary value problem \eqref{eq:sys1}--\eqref{eq:sys6}.
\end{problem}
As usual $H(\div)'$ denotes the dual space of $H(\div)$, 
and $(c \dt u(t),v)_\E$ is understood as duality product.
With similar arguments as for the stationary problem, we obtain
\begin{lemma}[Equivalence]  \label{lem:equiv2} 
Any classical solution $(u,p)$ of \eqref{eq:sys1}--\eqref{eq:sys6} also solves Problem~\ref{prob:weak}
and, vice versa, any weak solution $(u,p)$ of Problem~\ref{prob:weak} that is sufficiently regular is also a classical 
solution of \eqref{eq:sys1}--\eqref{eq:sys6}.
\end{lemma}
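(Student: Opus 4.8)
The plan is to prove the equivalence in Lemma~\ref{lem:equiv2} by mimicking the argument already carried out for the stationary problem in Lemma~\ref{lem:equiv}, with the only genuinely new ingredient being the bookkeeping of the time-dependent terms. For the forward direction, I would assume $(u,p)$ is a classical solution, so that $(u,p) \in C^1([0,T];L^2\times L^2) \cap C([0,T];H(\div)\times H^1_0)$ and the pointwise equations \eqref{eq:sys1}--\eqref{eq:sys6} hold. Fix $t>0$ and a test function $v \in H(\div)$. I would multiply \eqref{eq:sys1} by $v$, integrate over each edge, and sum. The only term requiring care is $(\dx' p,v)_\E$: integrating by parts edge-by-edge produces the bulk term $-(p,\dx' v)_\E$ plus boundary contributions at the endpoints of each edge. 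Collecting these contributions vertex-by-vertex, as in the proof of Lemma~\ref{lem:equiv}, yields
\begin{align*}
\sum_{v \in \Vi} \sum_{e \in \E(v)} n^e(v)\, p^e(v)\, v^e(v) + \sum_{v \in \Vb} n^e(v)\, p^e(v)\, v^e(v),
\end{align*}
which vanishes: on $\Vi$ we use that $p$ is continuous (so $p^e(v)=p(v)$ factors out) together with the conservativity $\sum_{e\in\E(v)} n^e(v) v^e(v)=0$ encoded in $v \in H(\div)$, and on $\Vb$ we use the boundary condition $p^e(v)=0$ from membership in $H^1_0$. This gives \eqref{eq:weak1}. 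Multiplying \eqref{eq:sys2} by $q \in L^2$ and summing gives \eqref{eq:weak2} directly, with no integration by parts needed. The initial conditions \eqref{eq:sys6} and the required regularity transfer immediately, so $(u,p)$ solves Problem~\ref{prob:weak}.

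For the converse, I would assume $(u,p)$ is a weak solution that is sufficiently regular, meaning $(u,p) \in H^1(\E)\times H^1(\E)$ pointwise in time with enough time-regularity to interpret the derivatives classically. Taking $q$ in \eqref{eq:weak2} to range over functions supported on a single edge recovers $b^e \dt p^e + \dx u^e = g^e$ on each $e$, that is \eqref{eq:sys2}. Similarly, testing \eqref{eq:weak1} with $v \in H(\div)$ supported on a single edge and away from the vertices, the boundary terms disappear and integration by parts yields $c^e\dt u^e + \dx p^e + a^e u^e = f^e$ on each $e$, that is \eqref{eq:sys1}. The membership $u \in H(\div)$ and $p \in H^1_0$ encodes exactly the coupling and boundary conditions \eqref{eq:sys3}--\eqref{eq:sys5}, and once \eqref{eq:sys1} holds on each edge, reversing the integration-by-parts/vertex-summation computation and using arbitrary $v \in H(\div)$ forces the boundary sum to vanish; this is precisely the argument used at the end of the proof of Theorem~\ref{thm:stat} to recover continuity of $p$ across junctions and the vanishing of $p$ on $\Vb$ whenever these are not already built into the function space. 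Together with $u(0)=u_0$, $p(0)=p_0$ this gives a classical solution of \eqref{eq:sys1}--\eqref{eq:sys6}.

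The main obstacle, such as it is, lies not in the algebra of integration by parts, which is identical to the stationary case, but in handling the time-dependence correctly: one must verify that the weak-derivative terms $(c\dt u,v)_\E$ and $(b\dt p,q)_\E$, a priori defined as duality products in $L^2(0,T;H(\div)'\times L^2)$, coincide with the classical pointwise time derivatives under the extra regularity hypothesis, and that the equivalence holds for a.e.\ $t$ rather than merely in an integrated sense. This is a standard density and Lebesgue-point argument, so I would state it briefly and refer to the analogous reasoning in Lemma~\ref{lem:equiv}, remarking that the proof is obtained from the stationary case by carrying the time-derivative terms along unchanged.
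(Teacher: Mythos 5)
Your proof is correct and takes essentially the same route as the paper, which proves Lemma~\ref{lem:equiv2} by simply invoking ``similar arguments as for the stationary problem'' --- i.e., exactly the edge-wise integration by parts, vertex-by-vertex bookkeeping with continuity of $p$ on $\Vi$, conservativity of $v \in H(\div)$, and the boundary condition on $\Vb$, plus the test-function construction from the end of the proof of Theorem~\ref{thm:stat} to recover \eqref{eq:sys3} and \eqref{eq:sys5} in the converse direction, all of which you carry out. Your closing remark on identifying the duality pairing $(c\dt u,v)_\E$ with the classical time derivative under the extra regularity is a reasonable elaboration of a point the paper leaves implicit.
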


Before we proceed, let us present some auxiliary results, 
which are required later on. 
The proof of Theorem~\ref{thm:stability} will then be completed in Section~\ref{sec:proof}. 

\subsection{A generalized Poincar\'e estimate}

In the stability analysis of the stationary problem, we already encountered the space
\begin{align} \label{eq:h0div}
H^0(\div) = \{u \in H(\div) : \dx' u = 0\}
\end{align}
of piecewise constant conservative fluxes.
Note that this space is finite dimensional.
We now define a projection operator $\Pi^0 : L^2 \to H^0(\div)$, $u \mapsto u_0 := \Pi^0 u$ by
\begin{align}  \label{eq:pi0}
u^0 \in H^0(\div) : \quad (a u^0,v^0)_\E = (a u,v^0)_\E \qquad \text{for all } v^0 \in H^0(\div).
\end{align}
This finite dimensional variational problem is uniquely solvable, 
and we readily obtain
\begin{lemma}[Projection to piecewise constant fluxes] \label{lem:projection} $ $\\
Let (A1), (A4) hold. 
Then $\Pi^0 : L^2 \to H^0(\div)$ is well-defined, linear, and bounded with
\begin{align} \label{q:projection}
\|\Pi^0 u\|_{H(\div)} = \|\Pi^0u\|_{L^2}  \le C_{\Pi} \|u\|_{L^2} \qquad \text{for all } u \in L^2.
\end{align}
The stability constant can be chosen as $C_{\Pi}=\big(\frac{C_1}{C_0}\big)^{1/2}$, in particular, independent of $u$.
\end{lemma}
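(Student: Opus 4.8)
The plan is to read the defining relation \eqref{eq:pi0} as a finite-dimensional variational problem posed on the space $H^0(\div)$ with the symmetric bilinear form $(a\,\cdot,\cdot)_\E$. First I would observe that, restricted to $H^0(\div)$, this form is coercive and bounded: since every $v^0 \in H^0(\div)$ satisfies $\dx' v^0 = 0$, the $H(\div)$-norm coincides with the $L^2$-norm on this space, so that $(a v^0, v^0)_\E \ge C_0 \|v^0\|_{L^2}^2 = C_0 \|v^0\|_{H(\div)}^2$ by the lower bound in (A4), while $(a u^0,v^0)_\E \le C_1 \|u^0\|_{L^2}\|v^0\|_{L^2}$ follows from Cauchy--Schwarz and the upper bound. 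As $v^0 \mapsto (a u, v^0)_\E$ is a bounded linear functional on $H^0(\div)$, the Lax--Milgram lemma (or simply the invertibility of a coercive form on a finite-dimensional space) yields a unique solution $u^0 = \Pi^0 u$, and linearity of $\Pi^0$ is inherited from the linearity of this functional in $u$.

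The norm identity is then immediate: because $\Pi^0 u \in H^0(\div)$ has vanishing broken derivative, one has $\|\Pi^0 u\|_{H(\div)}^2 = \|\Pi^0 u\|_{L^2}^2 + \|\dx'\Pi^0 u\|_{L^2}^2 = \|\Pi^0 u\|_{L^2}^2$.

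For the sharp stability constant I would introduce the energy inner product $(u,v)_a := (a u, v)_\E$ with associated norm $\|u\|_a := (a u, u)_\E^{1/2}$, which is a genuine norm by (A4). With respect to $(\cdot,\cdot)_a$, the definition \eqref{eq:pi0} states precisely that $\Pi^0 u$ is the $(\cdot,\cdot)_a$-orthogonal projection of $u$ onto $H^0(\div)$; being an orthogonal projection in this inner product, it is a contraction, so $\|\Pi^0 u\|_a \le \|u\|_a$. It then only remains to compare the energy norm with the $L^2$-norm using (A4), namely $C_0 \|w\|_{L^2}^2 \le \|w\|_a^2 \le C_1 \|w\|_{L^2}^2$ for all $w \in L^2$. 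Applying the lower bound to $w = \Pi^0 u$ and the upper bound to $w = u$ gives
\[
C_0^{1/2}\, \|\Pi^0 u\|_{L^2} \le \|\Pi^0 u\|_a \le \|u\|_a \le C_1^{1/2}\,\|u\|_{L^2},
\]
which is exactly the claimed estimate with $C_{\Pi} = (C_1/C_0)^{1/2}$.

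The only subtle point, and the main obstacle, is obtaining this sharp constant rather than the weaker $C_1/C_0$ that a direct estimate would yield: testing \eqref{eq:pi0} with $v^0 = \Pi^0 u$ and bounding the right-hand side crudely by $C_1\|u\|_{L^2}\|\Pi^0 u\|_{L^2}$ only gives $\|\Pi^0 u\|_{L^2} \le (C_1/C_0)\|u\|_{L^2}$. The improvement hinges on recognizing $\Pi^0$ as an orthogonal projection in the weighted inner product $(\cdot,\cdot)_a$, so that its contractivity can be exploited \emph{before} passing between the $a$-norm and the $L^2$-norm.
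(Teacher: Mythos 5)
Your proof is correct and follows the same route as the paper, which likewise identifies $\Pi^0$ as the orthogonal projection with respect to the weighted scalar product $(a\,\cdot,\cdot)_\E$ and deduces the bound with constant $C_\Pi = (C_1/C_0)^{1/2}$ from the bounds on $a$ in (A4). Your write-up simply spells out the details (well-posedness via coercivity on the finite-dimensional space, the norm identity from $\dx'\Pi^0 u = 0$, and the contraction-then-norm-comparison step) that the paper leaves implicit.
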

\begin{proof}
The operator $\Pi^0$ is the orthogonal projection with respect to the weighted 
scalar product $(a \cdot, \cdot)_\E$. 
The assertion then follows from the bounds for $a$ in assumption (A4).
\end{proof}

The following estimate plays a crucial role in our proof of the exponential stability.
 
\begin{lemma}[Generalized Poincar\'e inequality] \label{lem:poincare}  
Let (A1)--(A4) hold. 
Then 
\begin{align} \label{eq:poincare}
\|c^{1/2} u\|_{L^2}^2 \le C_P^2 \big( \|b^{-1/2} \dx' u\|_{L^2}^2 + \|a^{1/2} \Pi^0 u\|_{L^2}^2 \big) \qquad \forall u \in H(\div),
\end{align}
and the Poincar\'e constant $C_P$ can be chosen independent of $u$.
\end{lemma}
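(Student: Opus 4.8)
The plan is to prove this by a compactness-and-contradiction argument, after first using assumption (A4) to reduce to a coefficient-free statement. Since $C_0 \le a,b,c \le C_1$, each of the three weighted norms appearing in \eqref{eq:poincare} is equivalent to the plain $L^2(\E)$-norm; it therefore suffices to produce a constant $C$ with $\|u\|_{L^2}^2 \le C\big(\|\dx' u\|_{L^2}^2 + \|\Pi^0 u\|_{L^2}^2\big)$ for all $u \in H(\div)$, and then to absorb the coefficient bounds into $C_P$.

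Suppose no such $C$ exists. Then there is a sequence $(u_n) \subset H(\div)$ with $\|u_n\|_{L^2} = 1$ while $\|\dx' u_n\|_{L^2}^2 + \|\Pi^0 u_n\|_{L^2}^2 \to 0$. In particular $\|u_n\|_{H(\div)}^2 = \|u_n\|_{L^2}^2 + \|\dx' u_n\|_{L^2}^2$ stays bounded, so $(u_n)$ is bounded in $H^1(\E)$. Because the embedding $H^1(\E) \hookrightarrow L^2(\E)$ is compact (Rellich on each of the finitely many edges), a subsequence, still denoted $(u_n)$, converges in $L^2(\E)$ to some limit $u$, and by the normalization $\|u\|_{L^2} = 1$.

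Next I would identify the limit. Since $\dx' u_n \to 0$ in $L^2(\E)$ and $u_n \to u$ in $L^2(\E)$, closedness of the broken derivative gives $\dx' u = 0$ and hence $u_n \to u$ strongly in $H(\div)$. As $H(\div)$ is a closed subspace of $H^1(\E)$, the limit satisfies $u \in H(\div)$ with $\dx' u = 0$, i.e. $u \in H^0(\div)$; consequently $\Pi^0 u = u$. On the other hand, $\Pi^0$ is bounded on $L^2$ by Lemma~\ref{lem:projection}, so $\Pi^0 u_n \to \Pi^0 u$ in $L^2$; combined with $\|\Pi^0 u_n\|_{L^2} \to 0$ this forces $\Pi^0 u = 0$. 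The two identities give $u = \Pi^0 u = 0$, contradicting $\|u\|_{L^2} = 1$. This contradiction establishes the reduced bound and, after restoring the coefficients, \eqref{eq:poincare}.

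The main obstacle is the passage to the limit in the constraint defining $H^0(\div)$: I must be sure that the finite-dimensional space $H^0(\div)$ of piecewise-constant conservative fluxes is closed and that $\Pi^0$ fixes it, which is exactly why I arrange strong $H(\div)$-convergence rather than mere weak convergence, and why I invoke the continuity of $\Pi^0$ from Lemma~\ref{lem:projection}. A purely constructive alternative, writing $u = \Pi^0 u + w$ with $\dx' w = \dx' u$ and proving an edgewise Poincar\'e bound for the complementary part $w$, runs into the difficulty that $w$ is only globally orthogonal to $H^0(\div)$ and need not have vanishing mean on individual edges, so the compactness route seems the most economical.
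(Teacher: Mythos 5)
Your proposal is correct and takes essentially the same route as the paper: the paper simply cites the lemma of equivalent norms (Tartar), whose proof is exactly the compactness-and-contradiction argument you carry out by hand, resting on the compact embedding $H^1(\E)\hookrightarrow L^2(\E)$, the finite dimensionality of the kernel $H^0(\div)$ of the seminorm $\|b^{-1/2}\dx' u\|_{L^2}$, and the strict positivity of $\|a^{1/2}\Pi^0 u\|_{L^2}$ on that kernel. Your explicit check that strong $H(\div)$-convergence preserves the nodal conservation constraints (so the limit lies in $H^0(\div)$ and is fixed by $\Pi^0$) merely spells out a detail the paper delegates to the cited reference.
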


\begin{proof}
The term $\|b^{-1/2} \dx' u\|_{L^2}$ is a semi-norm on $H(\div)$ with kernel $H^0(\div)$. 
Since $H^0(\div)$ is finite dimensional, the embedding of $H(\div)$ into $H^0(\div)$ is compact. 
The last term in \eqref{eq:poincare} is also a semi-norm on $H(\div)$ and strictly positive on $H^0(\div)$. 
The assertion then follows from the lemma of equivalent norms \cite[Ch~11]{Tartar07}.  
\end{proof}

\begin{remark}
Due to the bounds for the coefficients, the right hand side of \eqref{eq:poincare} 
defines a norm which by the assertion of the Lemma is equivalent to the standard norm on $H(\div)$.
\end{remark}

The estimate \eqref{eq:equivalence} holds for general functions $u \in H(\div)$.
For solutions $(u,p)$ of Problem \ref{prob:weak}, we deduce the following bounds that will be used for our analysis later on. 
\begin{lemma}[Bounds for the $L^2$ norm] \label{lem:poincare2} $ $\\
Let (A1)--(A4) hold and $(u(t),p(t)) \in H(\div) \times L^2$ solve \eqref{eq:weak1}--\eqref{eq:weak2} with $f \equiv g\equiv0$. 
Then
 \begin{align} \label{eq:poincare2}
  \|c^{1/2} u(t)\|^2_{L^2}\le C_P^2 \big(\tfrac{C_1}{C_0} \big)\big( \|c^{1/2}\dt u(t)\|_{L^2}^2+\|b^{1/2}\dt p(t)\|_{L^2}^2 \big).
 \end{align}
\end{lemma}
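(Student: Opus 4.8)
The plan is to apply the generalized Poincar\'e inequality of Lemma~\ref{lem:poincare} to $u(t) \in H(\div)$ and then to bound the two resulting terms using the weak equations \eqref{eq:weak1}--\eqref{eq:weak2} specialized to $f \equiv g \equiv 0$. Lemma~\ref{lem:poincare} immediately gives
\[
\|c^{1/2} u(t)\|_{L^2}^2 \le C_P^2 \big( \|b^{-1/2} \dx' u(t)\|_{L^2}^2 + \|a^{1/2} \Pi^0 u(t)\|_{L^2}^2 \big),
\]
so it remains to control each summand on the right by the kinetic quantities $\|c^{1/2}\dt u(t)\|_{L^2}^2$ and $\|b^{1/2}\dt p(t)\|_{L^2}^2$.

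The divergence term is the easy one. Since \eqref{eq:weak2} holds for all $q \in L^2$, the bracket $(b\,\dt p(t) + \dx' u(t), q)_\E = 0$ forces the pointwise identity $\dx' u(t) = - b\,\dt p(t)$ in $L^2(\E)$. Substituting this directly yields $\|b^{-1/2}\dx' u(t)\|_{L^2}^2 = \|b^{1/2}\dt p(t)\|_{L^2}^2$, which is exactly the second term appearing in the claimed estimate.

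The projection term is the main step, and is where I expect the only real work. The idea is to test the momentum equation \eqref{eq:weak1} against functions in the finite-dimensional kernel $H^0(\div)$. For $v^0 \in H^0(\div)$ one has $\dx' v^0 = 0$, so the pressure contribution drops out and \eqref{eq:weak1} collapses to $(a u(t), v^0)_\E = -(c\,\dt u(t), v^0)_\E$. Combining this with the defining relation \eqref{eq:pi0} of $\Pi^0$, namely $(a \Pi^0 u(t), v^0)_\E = (a u(t), v^0)_\E$, gives $(a \Pi^0 u(t), v^0)_\E = -(c\,\dt u(t), v^0)_\E$ for every $v^0 \in H^0(\div)$. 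Taking $v^0 = \Pi^0 u(t)$ and applying the Cauchy--Schwarz inequality in the $c$-weighted inner product yields $\|a^{1/2}\Pi^0 u(t)\|_{L^2}^2 \le \|c^{1/2}\dt u(t)\|_{L^2}\,\|c^{1/2}\Pi^0 u(t)\|_{L^2}$. The coefficient bounds $c \le C_1$ and $a \ge C_0$ from (A4) then give $\|c^{1/2}\Pi^0 u(t)\|_{L^2} \le (C_1/C_0)^{1/2}\|a^{1/2}\Pi^0 u(t)\|_{L^2}$, and dividing through leaves $\|a^{1/2}\Pi^0 u(t)\|_{L^2}^2 \le (C_1/C_0)\,\|c^{1/2}\dt u(t)\|_{L^2}^2$.

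Finally I would assemble the pieces: inserting both bounds into the Poincar\'e estimate and using $C_1/C_0 \ge 1$ (which holds by (A4), so the factor may be pulled out in front of both terms) produces the stated inequality. The genuine insight, rather than an obstacle, is recognizing that testing \eqref{eq:weak1} against the kernel $H^0(\div)$ is precisely what links the projection $\Pi^0 u(t)$ to the time derivative $\dt u(t)$; the remaining care lies only in keeping the weighted norms consistent so that the sharp constant $C_1/C_0$ emerges.
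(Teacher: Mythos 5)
Your proposal is correct and follows essentially the same route as the paper: test \eqref{eq:weak1} with $v=\Pi^0 u(t)$ (equivalently, with elements of $H^0(\div)$ combined with the defining relation \eqref{eq:pi0}), identify $\dx' u(t)=-b\,\dt p(t)$ from \eqref{eq:weak2}, and insert both bounds into the generalized Poincar\'e inequality of Lemma~\ref{lem:poincare}. Your only deviations are cosmetic --- you obtain the divergence term as an exact identity where the paper states it with the (redundant) factor $C_1/C_0$, and you spell out the weighted Cauchy--Schwarz and coefficient manipulations that the paper leaves implicit.
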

\begin{proof}
We use $v=\Pi^0u(t)$ as a test function in \eqref{eq:weak1} with $f \equiv 0$. 
This yields
\begin{align*}
\|a^{1/2}\Pi^0u(t)\|^2
&= (a u(t),\Pi^0u(t))_{\E} = -(c\dt u(t),\Pi^0u(t))_{\E} \le \|c^{1/2} \dt u(t)\| \|c^{1/2} \Pi^0 u(t)\|.
\end{align*}
Together with \eqref{eq:weak2} for $g \equiv 0$ and with the bounds for the coefficients, we obtain 
\begin{align*}
\|a^{1/2}\Pi^0u(t)\|^2 \le \tfrac{C_1}{C_0} \|c^{1/2} \dt u(t)\|^2
\qquad \text{and} \qquad 
\|b^{-1/2} \dx' u(t)\|^2 \le \tfrac{C_1}{C_0} \|b^{1/2} \dt p(t)\|^2.
\end{align*}
The assertion now follows from these bounds and the Poincar\'e inequality \eqref{eq:poincare}.
\end{proof}

Theorem~\ref{thm:stability} can now be proven with similar techniques as the corresponding result for a single pipe \cite{EggerKugler15}. 
For convenience of the reader and to keep track of the constants, we recall in the following the main steps of the proof.

\subsection{Energy estimates}

We consider Problem~\ref{prob:weak} with data $f \equiv \bar f$ and $g \equiv \bar g$
independent of time and start with the second estimate of Theorem~\ref{thm:stability}.
Define the \emph{energy}
\begin{align*}
E(t) := \frac{1}{2} \big( \|c^{1/2}\dt u(t)\|_{L^2}^2+\|b^{1/2}\dt p(t)\|_{L^2}^2 \big).
\end{align*}
By differentiation of \eqref{eq:weak1}--\eqref{eq:weak2} with respect to time, we see that 
\begin{align}
(c \dtt u(t),v)_\E - (\dt p(t), \dx' v)_\E + (a \dt u(t), v)_\E = 0 \label{eq:weakt1}\\
(b \dtt p(t),q)_\E + (\dx'\dt  u(t), q)_\E = 0 \label{eq:weakt2}
\end{align}
for all $v \in H(\div)$ and $q \in L^2$ and a.e. $t>0$. 
For $v = \dt u(t)$ and $q = \dt p(t)$, we obtain
\begin{align} \label{eq:decay1}
\frac{d}{dt} E(t) = -(a \dt u(t), \dt u(t))_\E \le 0.
\end{align}
Hence $E$ is a Lyapunov functional for the evolution problem \eqref{eq:weak1}--\eqref{eq:weak2}. 
This estimate is however not sufficient to guarantee exponential decay of the energy.
Following an idea introduced first in \cite{BabinVishik83}, 
see also \cite{EggerKugler15,Zuazua88}, 
we consider additionally a \emph{modified energy}
\begin{align*}
 E_{\eps}(t) & := E(t) + \eps(c\dt u(t),u(t))_{\E}.
\end{align*}
For appropriate choice of $\eps$, the two energies can be shown to be equivalent.
\begin{lemma}[Equivalence] \label{lem:equivalence}
Let (A1)--(A4) hold and $|\eps| \le \frac{C_0}{4 C_1C_P}$. 
Then
 \begin{align} \label{eq:equivalence}
  \frac{1}{2}E(t)\le E_{\eps}(t)\le \frac{3}{2}E(t).
 \end{align}
\end{lemma}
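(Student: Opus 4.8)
The plan is to bound the cross term $\eps(c\dt u(t),u(t))_\E$ by a small multiple of the energy $E(t)$, so that the definition $E_\eps(t) = E(t) + \eps(c\dt u(t),u(t))_\E$ yields two-sided bounds. The key quantity to control is $|(c\dt u(t),u(t))_\E|$. First I would apply the Cauchy--Schwarz inequality with respect to the weighted inner product $(c\,\cdot,\cdot)_\E$ to obtain
\begin{align*}
|(c\dt u(t),u(t))_\E| \le \|c^{1/2}\dt u(t)\|_{L^2}\,\|c^{1/2} u(t)\|_{L^2}.
\end{align*}
The factor $\|c^{1/2}\dt u(t)\|_{L^2}$ is already part of $E(t)$, but the factor $\|c^{1/2}u(t)\|_{L^2}$ involves $u(t)$ itself rather than its time derivative, so it is not directly controlled by $E(t)$.

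The crucial step, and the one that makes the whole argument work, is to eliminate $\|c^{1/2}u(t)\|_{L^2}$ using the bound already established in Lemma~\ref{lem:poincare2}. That lemma gives
\begin{align*}
\|c^{1/2}u(t)\|_{L^2}^2 \le C_P^2\big(\tfrac{C_1}{C_0}\big)\big(\|c^{1/2}\dt u(t)\|_{L^2}^2 + \|b^{1/2}\dt p(t)\|_{L^2}^2\big) = 2\,C_P^2\big(\tfrac{C_1}{C_0}\big)E(t).
\end{align*}
Combining this with the Cauchy--Schwarz estimate and again using $\|c^{1/2}\dt u(t)\|_{L^2}^2 \le 2E(t)$, I expect to arrive at a bound of the form $|(c\dt u(t),u(t))_\E| \le \kappa\,E(t)$ for an explicit constant $\kappa$ depending on $C_0$, $C_1$, and $C_P$. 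This reduces the claim to showing that $|\eps|\,\kappa \le \tfrac{1}{2}$, which holds precisely when $|\eps|$ is below the stated threshold. I would then verify that the threshold $|\eps|\le \frac{C_0}{4C_1C_P}$ indeed guarantees $|\eps|\,\kappa \le \tfrac12$ by tracking the constants carefully.

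Once $|\eps(c\dt u(t),u(t))_\E| \le \tfrac12 E(t)$ is in hand, the two-sided estimate follows immediately: since $-\tfrac12 E(t) \le \eps(c\dt u(t),u(t))_\E \le \tfrac12 E(t)$, adding $E(t)$ throughout gives
\begin{align*}
\tfrac12 E(t) \le E(t) + \eps(c\dt u(t),u(t))_\E = E_\eps(t) \le \tfrac32 E(t),
\end{align*}
which is exactly \eqref{eq:equivalence}. The main obstacle is the constant-chasing in the second paragraph: one has to reconcile the constant $\kappa$ emerging naturally from Lemma~\ref{lem:poincare2} and Cauchy--Schwarz with the specific threshold $\frac{C_0}{4C_1C_P}$ claimed in the statement, and it is conceivable that the stated threshold is a slightly simplified or rounded version of the sharp one. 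I would therefore pay particular attention to whether $C_P$ or $C_P^2$ appears and to the powers of $C_1/C_0$, since a mismatch there would require either adjusting the constant or inserting a crude further estimate to recover the stated form.
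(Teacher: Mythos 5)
Your proposal is correct and takes essentially the same route as the paper: Cauchy--Schwarz in the $c$-weighted inner product, then Lemma~\ref{lem:poincare2} to bound $\|c^{1/2}u(t)\|_{L^2}$ in terms of $E(t)$, yielding $|(c\dt u,u)_\E|\le 2C_P\big(\tfrac{C_1}{C_0}\big)^{1/2}E(t)\le \tfrac{2C_1C_P}{C_0}E(t)$, after which the threshold on $|\eps|$ gives $|\eps(c\dt u,u)_\E|\le\tfrac12 E(t)$ and hence \eqref{eq:equivalence}. The constant mismatch you flag is resolved exactly as you anticipate: the paper inserts the crude estimate $\big(\tfrac{C_1}{C_0}\big)^{1/2}\le \tfrac{C_1}{C_0}$, so the stated (slightly conservative) threshold $\frac{C_0}{4C_1C_P}$ suffices.
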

\begin{proof}
By means of Lemma~\ref{lem:poincare2}, the additional term can be estimated by
\begin{align*}
|(c\dt u,u)_{\E}|
 & \le \|c^{1/2}\dt u\|\| c^{1/2} u\| \\
 & \le \|c^{1/2} \dt u\| C_P \big(\tfrac{C_1}{C_0}\big)^{1/2} \big( \|c^{1/2}\dt u(t)\|^2+\|b^{1/2}\dt p(t)\|^2 \big)^{1/2}  \le \tfrac{2 C_1 C_P}{C_0} E(t).
 \end{align*}
The assertion now follows by scaling with $\eps$ and some elementary calculations. 
\end{proof}

We next show that the modified energy $E_\eps$ also defines a Lyapunov functional for the evolution and, moreover, $E_\eps$ decreases exponentially along solution trajectories.
\begin{lemma}[Energy dissipation] \label{lem:decay}
Let 
$0<\eps\le\frac{C_0}{C_1} \frac{C_0}{2 C_0 + 4C_P C_1}=:\eps^*$. 
Then 
\begin{align} \label{eq:decayE}
\frac{d}{dt} E_\eps(t) \le - \frac{2\eps}{3} E_\eps(t).
\end{align}
\end{lemma}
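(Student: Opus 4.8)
The plan is to differentiate the modified energy $E_\eps$ along the trajectory, rewrite the derivative using the dissipation identity \eqref{eq:decay1} and the differentiated weak equations, and then absorb all non-dissipative contributions into the dissipation under the smallness condition $\eps\le\eps^*$. A convenient reduction comes first: since Lemma~\ref{lem:equivalence} gives $E_\eps(t)\le\tfrac32 E(t)$, equivalently $E(t)\ge\tfrac23 E_\eps(t)$, it suffices to establish the slightly stronger-looking bound $\frac{d}{dt}E_\eps(t)\le-\eps E(t)$; the claimed estimate \eqref{eq:decayE} then follows from $-\eps E(t)\le-\tfrac{2\eps}{3}E_\eps(t)$.

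First I would compute $\frac{d}{dt}E_\eps=\frac{d}{dt}E+\eps\frac{d}{dt}(c\dt u,u)_\E$. The first summand equals $-(a\dt u,\dt u)_\E=-\|a^{1/2}\dt u\|^2$ by \eqref{eq:decay1}, and the product rule gives $\frac{d}{dt}(c\dt u,u)_\E=(c\dtt u,u)_\E+\|c^{1/2}\dt u\|^2$. To remove the second time derivative I would test the differentiated momentum equation \eqref{eq:weakt1} with $v=u(t)$, obtaining $(c\dtt u,u)_\E=(\dt p,\dx' u)_\E-(a\dt u,u)_\E$, and test the mass equation \eqref{eq:weak2} (with $g\equiv0$) with $q=\dt p(t)$ to identify $(\dx' u,\dt p)_\E=-\|b^{1/2}\dt p\|^2$. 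Collecting these identities yields the compact form
\[
\frac{d}{dt}E_\eps=-\|a^{1/2}\dt u\|^2+\eps\big(\|c^{1/2}\dt u\|^2-\|b^{1/2}\dt p\|^2-(a\dt u,u)_\E\big).
\]

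It then remains to prove $\frac{d}{dt}E_\eps+\eps E\le0$, which after adding $\eps E=\tfrac{\eps}{2}\big(\|c^{1/2}\dt u\|^2+\|b^{1/2}\dt p\|^2\big)$ amounts to the elementary-looking inequality
\[
\tfrac{3\eps}{2}\|c^{1/2}\dt u\|^2+\eps\,|(a\dt u,u)_\E|\le\|a^{1/2}\dt u\|^2+\tfrac{\eps}{2}\|b^{1/2}\dt p\|^2.
\]
The positive term on the left is controlled by a portion of the dissipation through the coefficient bounds (A4), which give $\|c^{1/2}\dt u\|^2\le\tfrac{C_1}{C_0}\|a^{1/2}\dt u\|^2$. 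For the cross term I would use Cauchy--Schwarz, $|(a\dt u,u)_\E|\le\|a^{1/2}\dt u\|\,\|a^{1/2}u\|$, then convert $\|a^{1/2}u\|$ into energy quantities exactly as in the proof of Lemma~\ref{lem:equivalence}: the bound $\|a^{1/2}u\|\le(\tfrac{C_1}{C_0})^{1/2}\|c^{1/2}u\|$ together with the generalized Poincar\'e estimate of Lemma~\ref{lem:poincare2} expresses $\|c^{1/2}u\|$ through $(\|c^{1/2}\dt u\|^2+\|b^{1/2}\dt p\|^2)^{1/2}=(2E)^{1/2}$, and a final Young inequality distributes the product between the remaining dissipation and the energy. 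Keeping track of the resulting coefficients, each leftover factor becomes non-positive precisely when $\eps\le\eps^*$; the combination $2C_0+4C_P C_1$ in the denominator of $\eps^*$ is exactly the record of the coefficient ratio and the Poincar\'e constant entering these two estimates.

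The main obstacle is the indefinite cross term $\eps(a\dt u,u)_\E$: it is the only contribution without a definite sign, and it couples the velocity-type quantity $\dt u$ with $u$ itself, which sit at different differentiation orders. The mechanism that resolves this is precisely the generalized Poincar\'e bound of Lemma~\ref{lem:poincare2}, which lets $\|u\|$-type norms be re-expressed through the energy $E$; once this is done, the dissipation $\|a^{1/2}\dt u\|^2$ can swallow the cross term for $\eps$ small enough. Finally, with \eqref{eq:decayE} in hand, Gr\"onwall's inequality gives exponential decay of $E_\eps$, and hence of $E$ by Lemma~\ref{lem:equivalence}.
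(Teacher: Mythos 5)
Your proposal follows the paper's proof essentially step for step: you differentiate $E_\eps$, use \eqref{eq:decay1} and the identity $(c\dtt u,u)_\E=-(a\dt u,u)_\E-(b\dt p,\dt p)_\E$ obtained by testing \eqref{eq:weakt1} with $v=u$ and \eqref{eq:weak2} with $q=\dt p$, control the indefinite cross term via Cauchy--Schwarz, the coefficient bounds (A4), Lemma~\ref{lem:poincare2} and Young's inequality, and first establish $\frac{d}{dt}E_\eps(t)\le-\eps E(t)$ before invoking the equivalence of Lemma~\ref{lem:equivalence} -- exactly the paper's argument. The only (cosmetic) deviations are that you route the cross-term estimate through $\|a^{1/2}\cdot\|$ rather than $\|c^{1/2}\cdot\|$ norms and leave the final bookkeeping of constants asserted rather than written out, which matches the level of detail in the paper itself.
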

\begin{proof}
From the definition of $E_\eps$ and \eqref{eq:decay1}, we immediately get 
\begin{align*}
\frac{d}{dt}E_{\eps}(t)
&= \frac{d}{dt} E(t) + \eps \frac{d}{dt} (c\dt u(t),u(t))_{\E}\\
&\le -\|a^{1/2} \dt u(t)\|^2 + \eps \|c^{1/2} \dt u(t)\|^2 + \eps(c\dtt u(t),u(t))_{\E}.
\end{align*}
Using the variational principles \eqref{eq:weak1}--\eqref{eq:weak2} and \eqref{eq:weakt1}--\eqref{eq:weakt2}
characterizing $(u,p)$ and $(\dt u,\dt p)$, the bounds for the coefficients, and the bound \eqref{eq:poincare2}, 
we can estimate the last term by
\begin{align*}
(c\dtt u(t),u(t))_{\E}
  &=-(a\dt u,u)_\E-(b\dt p,\dt p)_\E\\
  &\le \big(\tfrac{C_1}{C_0}\big)^{1/2} \|c^{1/2} \dt u\| \|c^{1/2} u\| - \|b^{1/2}\dt p(t)\|^2 \\
  &\le C_P \big(\tfrac{C_1}{C_0}\big) \|c^{1/2}\dt u(t)\| \big( \|c^{1/2} \dt u(t)\|^2 + \|b^{1/2} \dt p(t)\|^2 \big)^{1/2} - \|b^{1/2}\dt p(t)\|^2. 
\end{align*}
Scaling with $\eps$ and an application of Young's inequality further yield
\begin{align*}
\eps (c\dtt u(t),u(t))_{\E}
  &\le \eps \tilde C \big( \tfrac{\tilde c}{2} + \tfrac{1}{2\tilde c}\big) \|c^{1/2} \dt u(t)\|^2 - \tfrac{\eps}{2} \|b^{1/2}\dt p(t)\|^2
\end{align*}
with constant $\tilde c = \tfrac{C_1 C_P}{C_0}$. 
Together with the above expression for $\frac{d}{dt} E_\eps(t)$, this leads to
\begin{align*}
\frac{d}{dt} E_\eps(t) 
&\le -\big(\tfrac{C_0}{C_1} - \tfrac{3}{2}\eps - \eps \tfrac{\tilde c^2}{2}\big) \|c^{1/2}\dt u(t)\|^2 - \tfrac{\eps}{2} \|b^{1/2}\dt p(t)\|^2.
\end{align*}
From the bounds for the parameter $\eps$, we can thus conclude that
\begin{align*}
\frac{d}{dt} E_\eps(t) \le - \eps E(t).
\end{align*}
The assertion then follows by equivalence of the two energies $E$ and $E_\eps$.
\end{proof}

\subsection{Proof of Theorem~\ref{thm:stability}} \label{sec:proof}

We are now in the position to complete the proof of Theorem~\ref{thm:stability}. 
Let us start with the second estimate: From Lemma~\ref{lem:decay}, we obtain
\begin{align*}
E_\eps(t) \le e^{-2 \eps^* (t-s)/3} E_\eps(s) \qquad \text{for all } t \ge s. 
\end{align*}
By Lemma~\ref{lem:equivalence}, we thus obtain \eqref{eq:est2} 
with $C=3$ and $\gamma=2\eps^*/3$ and $\eps^*$ as in Lemma~\ref{lem:decay}.

\medskip 

The first estimate \eqref{eq:est1} can now be deduced from \eqref{eq:est2} with the following arguments: 
Let $(\tilde u,\tilde p) \in H(\div) \times H_0^1$ denote the weak solution of the auxiliary stationary problem
\begin{align*}
a \tilde u + \dx' \tilde p &= u_0 - \bar u, \\
\dx' \tilde u &= p_0 - \bar p. 
\end{align*}
Due to the choice of the spaces, the continuity and boundary conditions \eqref{eq:stat3}--\eqref{eq:stat5} are 
satisfied automatically. 
By elementary calculations, one can see that the functions
\begin{align*}
U(t)=\int_0^tu(s)-\bar u \; ds - \tilde u 
\quad \text{and} \quad 
P(t)=\int_0^tp(s)-\bar p \; ds - \tilde p
\end{align*}
then satisfy the variational equations \eqref{eq:weak1}--\eqref{eq:weak2} with $f \equiv g\equiv 0$. 
Applying the second estimate \eqref{eq:est2} of Theorem~\ref{thm:stability} to $(U,P)$ instead of $(u,p)$, we obtain 
\begin{align*}
\|c^{1/2} \dt U(t)\|^2 + \|b^{1/2} \dt P(t)\|^2 \le C e^{-\gamma (t-s)} \big( \|c^{1/2} \dt U(s)\|^2 + \|b^{1/2} \dt P(s)\|^2\big).
\end{align*}
Since $\dt U(t) = u(t) - \bar u $ and $\dt P(t) = p(t) - \bar p$, this already yields the estimate \eqref{eq:est1}
and concludes the proof of Theorem~\ref{thm:stability}. \hfill \qed
%

\section{Discretization of the stationary problem} \label{sec:stath}

The proof of the well-posedness for the stationary problem was based on a variational characterization of solutions. This suggests to use Galerkin schemes for discretization. 

\subsection{Galerkin approximation}

Let $V_h \subset H(\div)$ and $Q_h \subset L^2$ be finite dimensional subspaces. 
For the discretization of the stationary problem, we consider conmforming Galerkin approximations of the following form. 

\begin{problem}[Space discretization] \label{prob:stath}
Find $\bar u_h \subset V_h$ and $\bar p_h \subset Q_h$ such that  
\begin{align}
(a \bar u_h, \bar v_h)_\E - (\bar p_h, \dx' \bar v_h)_\E &= (\bar f, \bar v_h)_\E \qquad \forall \bar v_h \in V_h \label{eq:weak1stath}\\
(\dx' \bar u_h, \bar q_h)_\E &= (\bar g, \bar q_h)_\E \qquad \ \forall \bar q_h \in Q_h. \label{eq:weak2stath}
\end{align}
\end{problem}

A particular realization of such a method by a mixed finite element approximation will be discussed in some detail in Section~\ref{sec:fem} below. 

\subsection{Stability and error analysis}

In order to ensure the well-posedness of the discrete variational problem, 
we require some basic conditions for the approximation spaces. 
In the sequel, we will therefore assume that
\begin{itemize}\setlength\itemsep{1ex}
 \item[(A5)] $V_h \subset H(\div)$ and $Q_h \subset L^2$ are finite dimensional;
 \item[(A6)] $\dx' V_h = Q_h$; 
 \item[(A7)] $H^0(\div) \subset V_h$.
\end{itemize}
The compatibility conditions (A6)--(A7) in particular ensure that \eqref{eq:weak2stath} is solvable. 
The assumptions (A5)--(A7) further allow us to prove the following discrete stability conditions.

\begin{lemma} \label{lem:infsuph}
Let (A1)--(A7) hold. Then 
\begin{itemize}\setlength\itemsep{1ex}
 \item[(S1h)] \ $(a u_h, u_h)_\E \ge \alpha \|u_h\|_{H(\div)}^2$ for all $u_h \in V_h^0=\{u_h \in V_h : (\dx' \bar u_h,q_h)_\E = 0 \ \forall q_h \in Q_h\}$;
 \item[(S2h)] \ $ \sup_{u_h \in V_h} (\dx' u_h, p_h)_\E / \|u_h\|_{H(\div)} \ge \beta \|p_h\|_{L^2}$ for all $p_h \in L^2(\E)$.
\end{itemize}
The stability constants $\alpha,\beta$ can be chosen the same as those in Lemma~\ref{lem:infsup}. 
\end{lemma}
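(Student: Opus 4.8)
The plan is to transfer the two continuous stability conditions from Lemma~\ref{lem:infsup} to the discrete setting, exploiting the compatibility assumptions (A6)--(A7). First I would observe that the discrete kernel coincides with the continuous one: by (A6), the condition $(\dx' u_h, q_h)_\E = 0$ for all $q_h \in Q_h = \dx' V_h$ forces $(\dx' u_h, \dx' w_h)_\E = 0$ for all $w_h \in V_h$, and taking $w_h = u_h \in V_h$ yields $\dx' u_h = 0$. Hence $V_h^0 \subset H^0(\div)$. The kernel ellipticity (S1h) then follows \emph{verbatim} from the computation in Lemma~\ref{lem:infsup}: for $u_h \in V_h^0 \subset H^0(\div)$ we have $\dx' u_h = 0$, so $(a u_h, u_h)_\E \ge C_0 \|u_h\|_{L^2}^2 = C_0 \|u_h\|_{H(\div)}^2$, and we may take $\alpha = C_0$ exactly as before.

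The inf-sup condition (S2h) is the step I expect to carry the real content. The natural strategy is Fortin's lemma: construct a bounded operator $\Pi_h : H(\div) \to V_h$ with $(\dx'(u - \Pi_h u), q_h)_\E = 0$ for all $q_h \in Q_h$ and $\|\Pi_h u\|_{H(\div)} \le C \|u\|_{H(\div)}$, which reduces the discrete inf-sup to the continuous one from Lemma~\ref{lem:infsup}. However, given the structure provided by (A6)--(A7), a more direct route is available that mimics the proof of (S2) closely. Fix $p_h \in Q_h \subset L^2$. In the continuous proof one built a test function $u = u_1 + u_2 \in H(\div)$ with $\dx' u = p_h$ and $\|u\|_{H(\div)} \le C \|p_h\|_{L^2}$. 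The key point is that this $u$ need not lie in $V_h$, but by (A6) there exists $u_h \in V_h$ with $\dx' u_h = p_h$; the correction term $u_2$ can be taken in $H^0(\div) \subset V_h$ by (A7), so the entire construction can be performed \emph{inside} $V_h$. Concretely, since $p_h \in Q_h = \dx' V_h$, choose any $w_h \in V_h$ with $\dx' w_h = p_h$; testing gives $(\dx' w_h, p_h)_\E = \|p_h\|_{L^2}^2$, and a stability bound $\|w_h\|_{H(\div)} \le C \|p_h\|_{L^2}$ (uniform in $h$) then yields the supremum estimate with the same $\beta$.

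The hard part will be guaranteeing the \emph{uniform} bound $\|w_h\|_{H(\div)} \le C \|p_h\|_{L^2}$ with a constant independent of the discretization level, since this is precisely what makes the discrete inf-sup constant $\beta$ coincide with the continuous one rather than degrade as $h \to 0$. The cleanest way to secure this is to realize $w_h$ as the image under a uniformly bounded Fortin projector of the continuous test function $u$ constructed in Lemma~\ref{lem:infsup}, so that $\|w_h\|_{H(\div)} \le C_F \|u\|_{H(\div)} \le C_F C \|p_h\|_{L^2}$. The existence of such a $\Pi_h$ with mesh-independent norm is exactly where assumptions (A6)--(A7) and the finite-dimensional graph structure from Lemma~\ref{cor:graph} enter: the piecewise-constant correction $u_2$ is handled by (A7), while (A6) ensures the divergence of the primary part lands in $Q_h$. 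Once uniform boundedness of $\Pi_h$ is established, the commuting property $\dx' \Pi_h = \Pi_h^Q \dx'$ (with $\Pi_h^Q$ the $L^2$-projection onto $Q_h$) transfers the continuous inf-sup constant $\beta$ to the discrete level unchanged, completing the proof.
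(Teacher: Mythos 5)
Your handling of (S1h) is correct and identical to the paper's: the inclusion $\dx' V_h \subset Q_h$ from (A6) legitimizes your choice of test function (since $\dx' u_h \in Q_h$, you may take $q_h = \dx' u_h$), giving $V_h^0 \subset H^0(\div)$ and hence kernel ellipticity with the unchanged constant $\alpha = C_0$. Your overall strategy for (S2h) — restrict to $p_h \in Q_h$ (clearly the intended reading of the statement, since for $p_h$ orthogonal to $Q_h = \dx' V_h$ the left-hand side vanishes) and perform the construction of Lemma~\ref{lem:infsup} inside $V_h$ using (A6)--(A7) — is also the paper's route.

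However, there is a genuine gap in your (S2h): you correctly identify the mesh-uniform bound $\|w_h\|_{H(\div)} \le C \|p_h\|_{L^2}$ as the crux, but you never establish it. Both of your routes end by assuming it (``a stability bound \dots\ then yields'', ``Once uniform boundedness of $\Pi_h$ is established''), and a Fortin projector with $h$-independent norm cannot be invoked off the shelf here: (A5)--(A7) are abstract hypotheses, not concrete finite element spaces, so constructing such a projector is exactly the point at issue and your argument is circular as written. The missing step is in fact a one-liner assembled from your own ingredients. Let $u = u_1 + u_2 \in H(\div)$ be the \emph{continuous} test function from Lemma~\ref{lem:infsup} for the datum $p_h \in Q_h$, so that $\dx' u = p_h$ and $\|u\|_{H(\div)} \le C \|p_h\|_{L^2}$. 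By (A6) choose any $w_h \in V_h$ with $\dx' w_h = p_h$; then $u - w_h \in H(\div)$ with $\dx'(u - w_h) = 0$, i.e.\ $u - w_h \in H^0(\div)$, which lies in $V_h$ by (A7). Hence $u = w_h + (u - w_h) \in V_h$: the entire affine solution set $\{v \in H(\div) : \dx' v = p_h\} = w_h + H^0(\div)$ is contained in $V_h$, so the continuous test function is already discrete and (S2h) follows with the same $\beta$ — no Fortin operator and no new uniform bound are needed. This observation is precisely what the paper means when it says that $\dx' V_h \supset Q_h$ and $H^0(\div) \subset V_h$ allow the proof of (S2) to apply verbatim on the discrete level (and, as a byproduct, it hands you the uniformly bounded discrete right inverse of $\dx'$ that your Fortin route was after).
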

\begin{proof}
The proof of Lemma~\ref{lem:infsup} applies almost verbatim also to the discrete setting:\\
The condition $\dx' V_h \subset Q_h$ ensures that $V_h^0 \subset H^0(\div)$.
This already yields the kernel ellipticity (S1h) with the same constant as on the continuous level. 
The two conditions $\dx' V_h \supset Q_h$ and $H^0(\div) \subset V_h$ allow us to apply the proof of 
condition (S2) in Lemma~\ref{lem:infsup} almost verbatim also on the discrete level.
\end{proof}

As a direct consequence of the previous lemma and the Brezzi theory, we obtain
\begin{theorem}[Error estimates] \label{thm:eestat}
Let (A1)--(A7) hold. Then for any $\bar f,\bar g \in L^2(\E)$,
Problem~\ref{prob:stath} has a unique discrete solution $(\bar u_h,\bar p_h) \in V_h \times Q_h$. 
Moreover, 
\begin{align*}
\|\bar u - \bar u_h\|_{H(\div)} + \|\bar p - \bar p_h\|_{L^2} 
\le C \big( \inf_{\bar v_h \in V_h} \|\bar u - \bar v_h\|_{H(\div)} + \inf_{q_h \in Q_h} \|\bar p - \bar q_h\|_{L^2} \big) 
\end{align*}
with constant $C$ depending only on the $\alpha$, $\beta$, and the bounds for the coefficients.
\end{theorem}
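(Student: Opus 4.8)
The plan is to invoke the abstract Brezzi theory, since the two essential stability conditions have already been secured in Lemma~\ref{lem:infsuph}. First I would establish existence and uniqueness of the discrete solution. Because the discrete kernel ellipticity (S1h) and the discrete inf-sup condition (S2h) hold with the \emph{same} constants $\alpha,\beta$ as on the continuous level, Brezzi's splitting lemma applies verbatim to the pair $(V_h,Q_h)$ in place of $(H(\div),L^2)$. As $V_h$ and $Q_h$ are finite dimensional by (A5), uniqueness already yields existence, and the same argument provides a stability bound $\|\bar u_h\|_{H(\div)} + \|\bar p_h\|_{L^2} \le C(\|\bar f\|_{L^2} + \|\bar g\|_{L^2})$ with $C$ depending only on $\alpha,\beta$ and the coefficient bounds.

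For the error estimate I would use the standard quasi-optimality argument for conforming mixed approximations. Since $V_h \subset H(\div)$ and $Q_h \subset L^2$, the continuous equations \eqref{eq:weak1stat}--\eqref{eq:weak2stat} may be tested with discrete functions; subtracting the discrete equations \eqref{eq:weak1stath}--\eqref{eq:weak2stath} then gives the Galerkin orthogonality
\begin{align*}
(a(\bar u - \bar u_h), \bar v_h)_\E - (\bar p - \bar p_h, \dx' \bar v_h)_\E &= 0 \qquad \forall \bar v_h \in V_h, \\
(\dx'(\bar u - \bar u_h), \bar q_h)_\E &= 0 \qquad \forall \bar q_h \in Q_h.
\end{align*}
Next I would pick arbitrary $\bar v_h \in V_h$ and $\bar q_h \in Q_h$ and split the errors by the triangle inequality, writing $\bar u - \bar u_h = (\bar u - \bar v_h) + (\bar v_h - \bar u_h)$ and similarly for $\bar p$. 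The discrete differences $\bar v_h - \bar u_h \in V_h$ and $\bar q_h - \bar p_h \in Q_h$ satisfy, by Galerkin orthogonality, a discrete mixed problem whose right hand side is controlled by the approximation errors $\|\bar u - \bar v_h\|_{H(\div)}$ and $\|\bar p - \bar q_h\|_{L^2}$. Applying the discrete stability of Lemma~\ref{lem:infsuph} bounds these differences, and taking the infimum over $\bar v_h$ and $\bar q_h$ yields the asserted estimate.

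Since all stability constants coincide with those of the continuous problem, the constant $C$ depends only on $\alpha,\beta$ and the bounds $C_0,C_1$, as claimed. I do not expect a genuine obstacle here: the only nontrivial analytical input is the verification of the discrete inf-sup and kernel-ellipticity conditions, which was already carried out in Lemma~\ref{lem:infsuph} by exploiting (A6)--(A7); the remainder is a routine application of the abstract mixed-method framework \cite{BoffiBrezziFortin13,Brezzi74}. The one point requiring care is tracking the dependence of $C$ so as to confirm that it is independent of the discretization level, which is exactly what the uniformity of $\alpha,\beta$ guarantees.
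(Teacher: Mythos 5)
Your proposal is correct and follows essentially the same route as the paper, which simply invokes the standard Brezzi theory for Galerkin approximations of mixed problems (citing \cite{Brezzi74} and \cite[Ch.~5]{BoffiBrezziFortin13}) on the basis of the discrete stability conditions (S1h)--(S2h) from Lemma~\ref{lem:infsuph}; you merely write out the standard quasi-optimality argument (Galerkin orthogonality, splitting via arbitrary $\bar v_h,\bar q_h$, discrete stability for the discrete part) that lies behind the cited result. Your tracking of the constants, showing $C$ depends only on $\alpha$, $\beta$, and the coefficient bounds and hence is uniform in $h$, matches the paper's claim exactly.
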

\begin{proof}
The assertion follows from standard results about the Galerkin approximation of mixed variational problems; 
see  \cite{Brezzi74} or \cite[Ch.~5]{BoffiBrezziFortin13} for details.
\end{proof}
\begin{remark}
Let us mention that somewhat stronger estimates for the discretization error can be obtained by further employing the compatibility condition (A6); see \cite[Ch.~5]{BoffiBrezziFortin13} for details. 
Particular examples of such estimates are given in Section~\ref{sec:fem} below.
\end{remark}

\subsection{Elliptic projection}

The discrete variational problem allows us to associate to any function
$(\bar u, \bar p) \in H(\div) \times L^2$ a discrete function $(\bar u_h,\bar p_h) \in V_h \times Q_h$ via
\begin{align*}
(a \bar u_h, \bar v_h)_\E - (\bar p_h, \dx' \bar v_h)_\E 
&= (a \bar u, \bar v_h)_\E - (\bar p, \dx' \bar v_h)_\E && \forall \bar v_h \in V_h \\
(\dx' \bar u_h, \bar q_h)_\E &= (\dx' \bar u, \bar q_h)_\E  && \forall \bar q_h \in Q_h. \end{align*}
This defines the \emph{elliptic projection}
$\Pi_h : H(\div) \times L^2 \to V_h \times Q_h$, $(\bar u,\bar p) \mapsto (\bar u_h,\bar p_h)$.
The following properties directly follow from the construction and the previous results.
\begin{lemma}[Elliptic projection]
The operator $\Pi_h:H(\div) \times L^2 \to V_h \times Q_h$ defined above is linear and bounded 
and leaves $V_h \times Q_h$ invariant. Moreover, 
\begin{align*}
\|\Pi_h(\bar u,\bar p)\|_{H(\div)\times L^2} \le C \|(\bar u,\bar p)\|_{H(\div) \times L^2} \qquad \forall (\bar u,\bar p) \in H(\div) \times L^2.
\end{align*}
\end{lemma}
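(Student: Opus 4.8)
The plan is to read off all four assertions from the discrete Brezzi theory already established in Lemma~\ref{lem:infsuph}. First I would observe that, for fixed $(\bar u,\bar p) \in H(\div) \times L^2$, the defining equations constitute a discrete mixed variational problem in the unknowns $(\bar u_h, \bar p_h) \in V_h \times Q_h$ governed by the \emph{same} bilinear forms $a(u,v)=(au,v)_\E$ and $b(u,p)=-(\dx' u, p)_\E$ as in Problem~\ref{prob:stath}. The only change is in the right-hand sides, which are now the linear functionals $F(\bar v_h) := (a\bar u, \bar v_h)_\E - (\bar p, \dx' \bar v_h)_\E$ on $V_h$ and $G(\bar q_h) := (\dx' \bar u, \bar q_h)_\E$ on $Q_h$. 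Since the stability conditions (S1h) and (S2h) of Lemma~\ref{lem:infsuph} hold with the constants $\alpha,\beta$ independent of $h$, Brezzi's splitting lemma yields existence and uniqueness of $(\bar u_h, \bar p_h)$ together with the a-priori bound $\|\bar u_h\|_{H(\div)} + \|\bar p_h\|_{L^2} \le C(\|F\|_{V_h'} + \|G\|_{Q_h'})$, with $C$ depending only on $\alpha$, $\beta$, and the coefficient bounds. Linearity of $\Pi_h$ is then immediate from the uniqueness of the solution and the linear dependence of $F$ and $G$ on $(\bar u,\bar p)$.

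Next I would bound the dual norms of the data. Using boundedness of $a$ from (A4) and the Cauchy--Schwarz inequality, together with the elementary estimate $\|\dx' \bar v_h\|_{L^2} \le \|\bar v_h\|_{H(\div)}$, one gets $|F(\bar v_h)| \le C(\|\bar u\|_{L^2} + \|\bar p\|_{L^2}) \|\bar v_h\|_{H(\div)}$, hence $\|F\|_{V_h'} \le C(\|\bar u\|_{L^2} + \|\bar p\|_{L^2})$. Likewise $|G(\bar q_h)| \le \|\dx' \bar u\|_{L^2}\|\bar q_h\|_{L^2} \le \|\bar u\|_{H(\div)}\|\bar q_h\|_{L^2}$, so $\|G\|_{Q_h'} \le \|\bar u\|_{H(\div)}$. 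Inserting these into the a-priori bound gives exactly the claimed stability estimate $\|\Pi_h(\bar u,\bar p)\|_{H(\div)\times L^2} \le C\|(\bar u,\bar p)\|_{H(\div)\times L^2}$. The only point requiring a moment's care is to recognize that the term $-(\bar p, \dx' \bar v_h)_\E$ in $F$ is controlled through $\|\dx' \bar v_h\|_{L^2}\le\|\bar v_h\|_{H(\div)}$ rather than through any norm of $\bar p$ beyond $\|\bar p\|_{L^2}$, so that the right-hand side of the estimate can be expressed solely in terms of the $H(\div)\times L^2$ norm of the data.

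For the invariance and projection property I would argue by uniqueness. If $(\bar u, \bar p)$ already lies in $V_h \times Q_h$, then the pair $(\bar u, \bar p)$ itself is an admissible candidate satisfying the defining equations, since every test function $\bar v_h \in V_h$, $\bar q_h \in Q_h$ is also a legitimate trial function and the two sides of each equation then coincide term by term. By the uniqueness just established, $\Pi_h(\bar u,\bar p) = (\bar u,\bar p)$, so $\Pi_h$ restricts to the identity on $V_h \times Q_h$; in particular its range lies in $V_h \times Q_h$ by construction and $\Pi_h^2 = \Pi_h$, confirming that $\Pi_h$ is a genuine projection leaving $V_h \times Q_h$ invariant.

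I do not anticipate any serious obstacle: the entire statement is a direct corollary of the discrete stability conditions of Lemma~\ref{lem:infsuph} combined with Brezzi's splitting lemma. The only genuinely new ingredient beyond Theorem~\ref{thm:eestat} is the elementary but essential observation that the modified right-hand side functionals $F$ and $G$ remain bounded, with dual norms controlled by $\|(\bar u,\bar p)\|_{H(\div)\times L^2}$; everything else is a verbatim application of the already-proven discrete well-posedness.
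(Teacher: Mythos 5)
Your proposal is correct and takes essentially the same route as the paper, which states only that the properties ``directly follow from the construction and the previous results'' and that the bound ``follows in the same way as Theorem~\ref{thm:eestat}'', i.e., via the uniform discrete Brezzi stability of Lemma~\ref{lem:infsuph}. You have simply filled in the details the paper leaves implicit --- the dual-norm bounds on the modified right-hand side functionals and the uniqueness argument showing $\Pi_h$ restricts to the identity on $V_h \times Q_h$ --- and all of these steps are sound.
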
 
The bound follows in the same way as Theorem~\ref{thm:eestat}.
Again, somewhat sharper estimates can be obtained by a refined analysis, as we will shown in Section~\ref{sec:fem} below.
%

\section{Semi-discretization of the instationary problem} \label{sec:semi}

The Galerkin approximation of the stationary problem can be extended without difficulty to the the variational formulation of the instationary problem. 

\subsection{Galerkin discretization}

Let $V_h \subset H(\div)$ and $Q_h \subset L^2$ be finite dimensional subspaces
and choose some $T>0$. 
For the discretization of the instationary problem, we consider Galerkin approximations of the following form. 

\begin{problem}[Semi-discretization] \label{prob:instath}
Find $(u_h,p_h) \in H^1(0,T;V_h \cap Q_h)$ with initial values 
$(u_h(0),v_h(0)) = \Pi_h(u_0,p_0)$ defined by the elliptic projection,
and such that 
\begin{align}
(c \dt u_h(t),v_h)_\E - (p_h(t), \dx' v_h)_\E + (a u_h(t), v_h)_\E = (f(t), v_h)_\E \label{eq:weak1h}\\
(b \dt p_h(t),q_h)_\E + (\dx' u_h(t), q_h)_\E = (g(t), q_h)_\E,                 \label{eq:weak2h}
\end{align}
for all test functions $v_h \in V_h$ and $q_h \in Q_h$, and every $t \in [0,T]$.
\end{problem}

By choice of a basis, the discrete variational problem 
can be turned into a linear system, and the existence of a unique solution 
follows by the Picard-Lindelöf theorem. 
\begin{lemma}
Let (A1)-(A5) hold, $u_0 \in H(\div)$, $p_0 \in L^2$, 
and $f,g \in L^2(0,T;L^2(\E))$. 
Then Problem~\ref{prob:instath} has a unique solution depending continuously
on the data.
\end{lemma}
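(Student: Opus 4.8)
The plan is to reduce Problem~\ref{prob:instath} to a standard initial value problem for a system of ordinary differential equations and then invoke the Picard--Lindel\"of theorem, exactly as the preceding sentence suggests. First I would fix bases $\{\phi_i\}_{i=1}^{N_u}$ of $V_h$ and $\{\psi_j\}_{j=1}^{N_p}$ of $Q_h$, which exist since these spaces are finite dimensional by assumption (A5). Writing $u_h(t) = \sum_i \mathsf{u}_i(t)\,\phi_i$ and $p_h(t) = \sum_j \mathsf{p}_j(t)\,\psi_j$ with coefficient vectors $\mathsf{u}(t)\in\RR^{N_u}$ and $\mathsf{p}(t)\in\RR^{N_p}$, the variational identities \eqref{eq:weak1h}--\eqref{eq:weak2h} are equivalent to the linear system
\begin{align*}
M_c \, \dot{\mathsf{u}}(t) - B^\top \mathsf{p}(t) + A\,\mathsf{u}(t) &= \mathsf{f}(t), \\
M_b \, \dot{\mathsf{p}}(t) + B\,\mathsf{u}(t) &= \mathsf{g}(t),
\end{align*}
where $M_c$, $M_b$ are the weighted mass matrices with entries $(c\,\phi_i,\phi_k)_\E$ and $(b\,\psi_j,\psi_l)_\E$, the matrix $A$ has entries $(a\,\phi_i,\phi_k)_\E$, and $B$ encodes the coupling $(\dx'\phi_i,\psi_j)_\E$; the right-hand sides $\mathsf{f}(t)$, $\mathsf{g}(t)$ collect the load functionals $(f(t),\phi_k)_\E$ and $(g(t),\psi_l)_\E$.

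The key structural point is that $M_c$ and $M_b$ are invertible. This follows from assumption (A4): since $c\ge C_0>0$, the form $(c\,\cdot,\cdot)_\E$ is an inner product on $L^2(\E)$, so its Gram matrix $M_c$ relative to the basis of $V_h$ is symmetric positive definite, hence regular; the same argument with $b\ge C_0>0$ gives regularity of $M_b$. Multiplying the two block equations by $M_c^{-1}$ and $M_b^{-1}$ respectively puts the system in explicit first-order form $\dot{\mathsf{w}}(t) = \mathcal{A}\,\mathsf{w}(t) + \mathcal{F}(t)$ for $\mathsf{w}=(\mathsf{u},\mathsf{p})$, with constant coefficient matrix $\mathcal{A}$ and inhomogeneity $\mathcal{F}(t)$ built from $M_c^{-1}\mathsf{f}(t)$ and $M_b^{-1}\mathsf{g}(t)$.

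The initial condition $(u_h(0),p_h(0))=\Pi_h(u_0,p_0)$ translates into a fixed vector $\mathsf{w}(0)=\mathsf{w}_0$, which is well-defined because the elliptic projection is well-defined and bounded under (A1)--(A7). Since $f,g\in L^2(0,T;L^2(\E))$, the map $t\mapsto\mathcal{F}(t)$ lies in $L^2(0,T;\RR^{N_u+N_p})$, so the right-hand side is Carath\'eodory: measurable in $t$, affine (hence globally Lipschitz) in $\mathsf{w}$ with an integrable Lipschitz bound. The Carath\'eodory version of the Picard--Lindel\"of theorem then yields a unique absolutely continuous solution $\mathsf{w}$ on $[0,T]$, which corresponds to a unique $(u_h,p_h)\in H^1(0,T;V_h\times Q_h)$. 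Continuous dependence on the data follows from Gronwall's inequality applied to the difference of two solutions, giving a bound of the solution norm in terms of $\|\mathsf{w}_0\|$ and $\|\mathcal{F}\|_{L^2(0,T)}$, and thence in terms of the norms of $u_0,p_0,f,g$.

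I do not expect a genuine obstacle here, since everything reduces to finite-dimensional linear ODE theory. The only point requiring the hypotheses in an essential way is the invertibility of the mass matrices, which rests squarely on the coercivity bound $C_0\le b,c$ in (A4); conditions (A6)--(A7) play no role for this existence statement and enter only in the subsequent stability analysis.
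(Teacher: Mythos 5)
Your proposal is correct and follows exactly the route the paper indicates: the paper's entire argument is the sentence preceding the lemma, namely that choosing bases turns Problem~\ref{prob:instath} into a linear ODE system whose unique solvability follows from Picard--Lindel\"of, and you have simply carried this out in detail (invertibility of the mass matrices via (A4), Carath\'eodory regularity of the right-hand side, Gronwall for continuous dependence). Your closing observation is also apt, with the one caveat you yourself note: while (A6)--(A7) are irrelevant for the ODE argument, the initial datum $\Pi_h(u_0,p_0)$ is defined through the elliptic projection, whose well-posedness in the paper rests on the discrete inf-sup theory and hence on (A5)--(A7), so strictly speaking the lemma's hypothesis (A1)--(A5) suffices only once some well-defined initial value in $V_h\times Q_h$ is fixed.
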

\begin{remark}
The error analysis for the Galerkin approximation can now be carried out 
in the usual way; see e.g. \cite{CowsarDupontWheeler96,Geveci88}. 
Unfortunately, the constants in the error estimates will depend on the time horizon $T$, 
which prohibits an investigation of the long-term behaviour. 
To obtain estimates that are uniform in $T$, 
a more detailed stability analysis for the discrete problems is required. 
\end{remark}

\subsection{Exponential stability and uniform a-priori estimates}

Let $f \equiv \bar f$ and $g \equiv \bar g$ be independent of time. 
In this case, the solution $(u(t),p(t))$ of the instationary problem \eqref{eq:sys1}--\eqref{eq:sys5} was shown to converge to the equilibrium 
$(\bar u,\bar p)$ exponentially fast.
This behaviour is preserved by the Galerkin approximations discussed above.

\begin{theorem}[Discrete exponential stability] $ $ \label{thm:stabh}\\
Let (A1)--(A7) hold and let $(\bar u_h,\bar p_h)$ and $(u_h,p_h)$ be the solutions of Problem~\ref{prob:stath} and \ref{prob:instath} 
with $f \equiv \bar f$ and $g \equiv \bar g$ independent of time.
Then 
\begin{align*}
\|u_h(t) - \bar u_h\|_{L^2}^2 + \|p_h(t) - \bar p_h\|_{L^2}^2 \le C e^{-\gamma (t-s)} \big( \|u_h(s) - \bar u_h\|_{L^2}^2 + \|p_h(s) - \bar p_h\|_{L^2}^2 \big) .
\end{align*}
The constants $C,\gamma>0$ can be chosen the same as those in Theorem~\ref{thm:stability}.  
\end{theorem}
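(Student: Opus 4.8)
The plan is to transcribe the proof of Theorem~\ref{thm:stability} essentially verbatim to the semi-discrete setting, the point being that every ingredient of that proof was arranged to be generic and that the compatibility conditions (A6)--(A7) reproduce \emph{exactly} the stability constants of the continuous problem.

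First I would reduce to homogeneous data. Writing $w_h = u_h - \bar u_h$ and $r_h = p_h - \bar p_h$ and subtracting the discrete stationary equations \eqref{eq:weak1stath}--\eqref{eq:weak2stath} from the semi-discrete equations \eqref{eq:weak1h}--\eqref{eq:weak2h}, and using that $\bar u_h,\bar p_h$ are time independent, one sees that $(w_h,r_h)$ solves \eqref{eq:weak1h}--\eqref{eq:weak2h} with $f\equiv g\equiv 0$. Since the semi-discrete problem is a linear system of ordinary differential equations with constant coefficients and time independent data, $(w_h,r_h)$ is smooth in time, so all time derivatives used below exist.

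The crux is the discrete analogue of the Poincar\'e-type bound of Lemma~\ref{lem:poincare2}, which I would establish with the \emph{same} constant $C_P$. Because (A7) gives $H^0(\div)\subset V_h$, the projection $\Pi^0 w_h$ lies in $V_h$ and is thus an admissible test function in \eqref{eq:weak1h}; because (A6) gives $\dx' V_h \subset Q_h$, the function $\dx' w_h$ is an admissible test function in \eqref{eq:weak2h}; and the generalized Poincar\'e inequality \eqref{eq:poincare} holds for $w_h \in V_h \subset H(\div)$ with the unchanged constant. Repeating the computation of Lemma~\ref{lem:poincare2} on $(w_h,r_h)$ then yields
\begin{align*}
\|c^{1/2} w_h(t)\|_{L^2}^2 \le C_P^2\bigl(\tfrac{C_1}{C_0}\bigr)\bigl(\|c^{1/2}\dt w_h(t)\|_{L^2}^2+\|b^{1/2}\dt r_h(t)\|_{L^2}^2\bigr).
\end{align*}
With this in hand, the energy $E(t)=\tfrac12(\|c^{1/2}\dt w_h\|_{L^2}^2+\|b^{1/2}\dt r_h\|_{L^2}^2)$, the modified energy $E_\eps=E+\eps(c\dt w_h,w_h)_\E$, and Lemmas~\ref{lem:equivalence} and~\ref{lem:decay} carry over without change, since every test function they use ($\dt w_h$, $\dt r_h$, $w_h$, $\Pi^0 w_h$) is admissible in $V_h\times Q_h$ and all constants coincide. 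This gives the discrete counterpart of the second estimate \eqref{eq:est2} with the same $C=3$ and $\gamma=2\eps^*/3$ as in Theorem~\ref{thm:stability}.

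Finally I would upgrade this derivative estimate to the asserted $L^2$ bound for $(w_h,r_h)$ via the discrete integration argument of Section~\ref{sec:proof}. Let $(\tilde u_h,\tilde p_h)\in V_h\times Q_h$ solve the discrete auxiliary stationary problem (Problem~\ref{prob:stath}), chosen exactly as $(\tilde u,\tilde p)$ in Section~\ref{sec:proof} so that $W_h(t)=\int_0^t w_h(s)\,ds-\tilde u_h$ and $R_h(t)=\int_0^t r_h(s)\,ds-\tilde p_h$ again solve the homogeneous semi-discrete problem; well-posedness of this auxiliary problem is provided by Theorem~\ref{thm:eestat}, and $W_h\in V_h$, $R_h\in Q_h$ because $V_h,Q_h$ are finite dimensional and hence closed under integration in time. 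Applying the discrete second estimate to $(W_h,R_h)$ and using $\dt W_h=w_h=u_h-\bar u_h$ and $\dt R_h=r_h=p_h-\bar p_h$ yields the claim with the constants of Theorem~\ref{thm:stability}. I expect the only real obstacle to be the bookkeeping of the crux step: checking that (A6)--(A7) are precisely the two hypotheses that keep $\Pi^0 w_h$ and $\dx' w_h$ admissible, for this is what forces the discrete Poincar\'e constant, and therefore the rate $\gamma$, to coincide with the continuous one.
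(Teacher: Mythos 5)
Your proposal is correct and follows essentially the same route as the paper, whose proof is precisely a sketch stating that the continuous argument (energy equivalence, dissipation estimate, and the time-integration trick of Section~\ref{sec:proof}) applies verbatim on the discrete level. In fact, your write-up fills in the one detail the paper leaves implicit — that (A7) makes $\Pi^0 w_h$ admissible in \eqref{eq:weak1h} and (A6) makes $\dx' w_h$ admissible in \eqref{eq:weak2h}, so the discrete analogue of Lemma~\ref{lem:poincare2} holds with the unchanged constant $C_P$ and hence the same $\gamma$ — which is exactly the intended mechanism.
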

\begin{proof}
The proof of Theorem~\ref{thm:stability} applies almost verbatim.
For convenience of the reader, we again sketch the main steps: 
We first define discrete energies $E_h$ and $E_{\eps,h}$ and show their equivalence; 
the proof of Lemma~\ref{lem:equivalence} applies verbatim.
As a next step, we establish a discrete version of the energy dissipation estimate in
Lemma~\ref{lem:decay}; again, the proof applies verbatim also on the discrete level. 
The discrete stability estimates are then obtained with the same arguments as 
presented in Section~\ref{sec:proof}.
\end{proof}

As a direct consequence of the discrete exponential stability estimates, 
we now obtain the following uniform a-priori bounds for the Galerkin approximations.
\begin{theorem}[Discrete a-priori bounds] \label{thm:apriorih} $ $\\
Let (A1)--(A7) hold and let $(u_h,p_h)$ denote the solution of Problem~\ref{prob:instath}.
Then 
\begin{align}
\|u_h(t)\|^2 + \|p_h(t) \|^2  \label{eq:esth}
&\le C' e^{-\gamma (t-s)} \big( \|u_h(s)\|^2 + \|p_h(s)\|^2\big) \\
& \qquad \qquad \qquad \qquad + C'' \int_s^t e^{-\gamma (t-r)} \big( \|f(r)\|^2 + \|g(r)\|^2\big) \; dr \notag
\end{align}
with constants $\gamma,C',C''>0$.
The decay rate $\gamma$ is the same as in Theorem~\ref{thm:stabh}.
\end{theorem}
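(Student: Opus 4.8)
The plan is to derive the inhomogeneous estimate \eqref{eq:esth} from the homogeneous discrete exponential stability result of Theorem~\ref{thm:stabh} by means of the variation-of-constants principle, exactly as in the proof of the continuous counterpart Theorem~\ref{thm:apriori}. First I would treat the case $f \equiv g \equiv 0$. Here $(\bar u_h, \bar p_h) = (0,0)$ is the unique solution of the homogeneous discrete stationary Problem~\ref{prob:stath}, so Theorem~\ref{thm:stabh} applied with $\bar f = \bar g = 0$ gives directly
\begin{align*}
\|u_h(t)\|_{L^2}^2 + \|p_h(t)\|_{L^2}^2 \le C e^{-\gamma(t-s)}\big(\|u_h(s)\|_{L^2}^2 + \|p_h(s)\|_{L^2}^2\big),
\end{align*}
which is \eqref{eq:esth} with $C' = C$ and the integral term absent. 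This handles the purely decaying part of the estimate with the same rate $\gamma$ as on the continuous level.

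Next I would account for the forcing. The discrete evolution \eqref{eq:weak1h}--\eqref{eq:weak2h} is a linear ODE system on the finite-dimensional space $V_h \times Q_h$, so its solution obeys Duhamel's formula: the solution with data $(f,g)$ equals the freely decaying solution started from $(u_h(s),p_h(s))$ plus the superposition over $r \in (s,t)$ of freely decaying solutions seeded at time $r$ by the instantaneous forcing. Applying the homogeneous bound just established to each of these constituent trajectories, and using $|(f(r),v_h)_\E| \le \|f(r)\|\,\|v_h\|$ together with the coefficient bounds (A4) to control the mass matrices $c,b$ uniformly, each forcing contribution decays like $e^{-\gamma(t-r)}$ and the propagated datum decays like $e^{-\gamma(t-s)}$. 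Summing (integrating) these yields precisely the structure of \eqref{eq:esth}, with $C''$ absorbing the constant $C$ from Theorem~\ref{thm:stabh} and the coefficient bounds $C_0, C_1$ from (A4). Crucially, since the same $\gamma$ governs every constituent trajectory, it persists in the final bound.

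The key point I would emphasize is that all constants are \emph{uniform in the discretization}: $\gamma$ is inherited verbatim from Theorem~\ref{thm:stabh}, which in turn inherits it from the continuous Theorem~\ref{thm:stability}, and $C', C''$ depend only on $C$, $\gamma$, and the coefficient bounds in (A4) --- never on $h$ or on $T$. This mesh- and horizon-independence is the whole purpose of the uniform a-priori estimate and is what distinguishes it from the naive Galerkin error analysis mentioned in the preceding remark.

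The main obstacle, and really the only subtle point, is the rigorous justification of the variation-of-constants argument when the exponential decay is phrased in the weighted energy norm rather than the plain $L^2$ norm. The homogeneous decay in Theorem~\ref{thm:stabh} is stated for $\|u_h - \bar u_h\|_{L^2}^2 + \|p_h - \bar p_h\|_{L^2}^2$, whereas the natural quantity preserved by Duhamel's formula is the weighted energy $\|c^{1/2}u_h\|^2 + \|b^{1/2}p_h\|^2$; passing between the two costs a factor $C_1/C_0$ from (A4), which must be tracked so that it is folded into $C'$ and $C''$ and does not contaminate $\gamma$. Since both norms are equivalent with $h$-independent constants, this causes no difficulty in practice, but it is the one place where the coefficient bounds genuinely enter. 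Everything else is the standard linear-ODE superposition, so the proof reduces to citing Theorem~\ref{thm:stabh} and invoking the variation of constants formula, in direct parallel with the proof of Theorem~\ref{thm:apriori}.
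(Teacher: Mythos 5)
Your proposal is correct and follows essentially the same route as the paper: the paper proves Theorem~\ref{thm:apriorih} by invoking the argument of Theorem~\ref{thm:apriori}, namely the homogeneous case from the discrete exponential stability of Theorem~\ref{thm:stabh} combined with the variation of constants formula for the forcing. Your additional remarks on Duhamel's formula in the finite-dimensional setting and on tracking the weighted-norm equivalence constants from (A4) are a faithful elaboration of what the paper leaves implicit.
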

\begin{proof}
The proof follows with the same arguments as that of Theorem~\ref{thm:apriori}. 
\end{proof}

\subsection{Error estimates}
We can now state the basic error estimates for the Galerkin discretizations proposed above. 
We do this in order to illustrate that the estimates are uniform with respect to time, 
and again only sketch the main arguments of the proofs.

\begin{theorem} \label{thm:eeinstat}
Let (A1)--(A7) hold and let $(u,p)$ and $(u_h,p_h)$ be the solutions of Problems~\ref{prob:weak} and \ref{prob:instath}, respectively. 
Moreover, set $(\widetilde u_h(t),\widetilde p_h(t)) = \Pi_h(u(t),p(t))$.
Then
\begin{align*}
&\|u(t)-u_h(t)\|^2_{L^2} + \|p(t) - p_h(t)\|^2_{L^2} 
\le \|u(t) - \widetilde u_h(t)\|^2_{L^2} + \| p(t) - \widetilde p_h(t)\|^2_{L^2} \\
& \qquad \qquad\qquad  + C'' \int_0^t e^{-\gamma (t-s)} \big( \|\dt u(s) - \dt \widetilde u_h(t)\|^2_{L^2} + \| \dt p(s) - \widetilde p_h(t)\|^2_{L^2} \big) \; ds.
\end{align*}
The constants $\gamma,C',C''>0$ are independent of $t$ and the functions $u$ and $p$.
\end{theorem}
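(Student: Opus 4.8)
The plan is to use the standard error-splitting technique for Galerkin approximations of evolution problems, combined with the discrete exponential stability already established in Theorem~\ref{thm:stabh}. First I would split the error into a projection part and a discrete part by writing
\begin{align*}
u(t) - u_h(t) = \big(u(t) - \widetilde u_h(t)\big) + \big(\widetilde u_h(t) - u_h(t)\big) =: \eta_u(t) + \xi_u(t),
\end{align*}
and analogously $p(t)-p_h(t) = \eta_p(t) + \xi_p(t)$, where $(\widetilde u_h,\widetilde p_h) = \Pi_h(u,p)$ is the elliptic projection and $(\xi_u,\xi_p) \in V_h \times Q_h$ is the discrete remainder. The projection errors $\eta_u,\eta_p$ appear directly on the right-hand side of the asserted estimate, so the whole task reduces to bounding the discrete part $(\xi_u,\xi_p)$ uniformly in $t$.

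The key step is to derive the error equation for $(\xi_u,\xi_p)$. Subtracting the semidiscrete system \eqref{eq:weak1h}--\eqref{eq:weak2h} from the weak formulation \eqref{eq:weak1}--\eqref{eq:weak2}, restricted to discrete test functions $v_h \in V_h$ and $q_h \in Q_h$, and then inserting the elliptic projection, I would exploit the defining orthogonality of $\Pi_h$. The crucial observation is that the elliptic projection is constructed precisely so that the spatial (stationary) bilinear forms applied to $\eta_u,\eta_p$ against discrete test functions vanish; hence the only surviving source terms are the time-derivative contributions $(c\,\dt \eta_u, v_h)_\E$ and $(b\,\dt \eta_p, q_h)_\E$. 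This yields that $(\xi_u,\xi_p)$ solves the discrete instationary system \eqref{eq:weak1h}--\eqref{eq:weak2h} with modified right-hand sides given by $\dt\eta_u$ and $\dt\eta_p$ and with vanishing initial data, since $(u_h(0),p_h(0)) = \Pi_h(u_0,p_0) = (\widetilde u_h(0),\widetilde p_h(0))$.

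With the error equation in hand, I would apply the discrete uniform a-priori estimate of Theorem~\ref{thm:apriorih} to $(\xi_u,\xi_p)$, treating $\dt\eta_u$ and $\dt\eta_p$ as the data $f,g$. Because $\xi_u(0)=\xi_p(0)=0$, the homogeneous initial term drops out and we are left with
\begin{align*}
\|\xi_u(t)\|^2 + \|\xi_p(t)\|^2 \le C'' \int_0^t e^{-\gamma(t-s)} \big( \|\dt\eta_u(s)\|^2 + \|\dt\eta_p(s)\|^2 \big)\, ds,
\end{align*}
with the same decay rate $\gamma$ as before. Combining this with the triangle inequality and the pointwise projection bound for $\eta_u(t),\eta_p(t)$ gives the stated estimate. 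The main obstacle I anticipate is verifying carefully that the elliptic projection really annihilates the spatial forms so that $\dt\eta_u,\dt\eta_p$ are the \emph{only} consistency terms; this requires that $\Pi_h$ commutes suitably with differentiation in time (so that $\dt\widetilde u_h = \widetilde{\dt u}_h$, valid by linearity of $\Pi_h$) and that the compatibility conditions (A5)--(A7) guarantee the discrete forms inherit the exact cancellation. Once this consistency property is confirmed, the remaining estimates are routine applications of the already-proven stability results.
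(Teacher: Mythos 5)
Your proposal is correct and follows essentially the same route as the paper: split the error via the elliptic projection, observe that the discrete remainder $(\xi_u,\xi_p)=(\widetilde u_h-u_h,\widetilde p_h-p_h)$ satisfies the semidiscrete system \eqref{eq:weak1h}--\eqref{eq:weak2h} with vanishing initial data and sources $\dt\widetilde u_h-\dt u$, $\dt\widetilde p_h-\dt p$ (this is exactly where the paper invokes ``the properties of the elliptic projection''), and then conclude with the uniform discrete a-priori bound of Theorem~\ref{thm:apriorih}. The only cosmetic difference is that the exact source terms carry the coefficients $c$ and $b$, i.e.\ $-(c\,\dt\eta_u,v_h)_\E$ and $-(b\,\dt\eta_p,q_h)_\E$, but by (A4) these factors are absorbed into the constant $C''$, just as in the paper.
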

\begin{proof}
As suggested in \cite{Varga,Wheeler73}, we can split the error 
into 
\begin{align*}
&\|u(t) - u_h(t)\| + \|p(t) - p_h(t)\| \\ 
& \le \big( \|u(t) - \widetilde u_h(t)\| + \|p(t) - \widetilde p_h(t)\| \big) 
    + \big( \|\widetilde u_h(t) - u_h(t)\| + \|\widetilde p_h(t) - p_h(t)\| \big).
\end{align*}
The first term on the right hand side already appears in the final estimate.
To bound the second term, we set
$w_h = \widetilde u_h(t) - u_h(t)$ and $r_h = \widetilde p_h(t) - p_h(t)$,
and note that $(w_h,r_h)$ satisfies $w_h(0)=0$ and $r_h(0)=0$ and, in addition,
\begin{align*}
(c \dt w_h(t),v_h)_\E - (r_h(t), \dx' v_h)_\E + (a w_h(t), v_h)_\E = (\widetilde f(t), v_h)_\E \qquad \forall v_h \in V_h \\
(b \dt r_h(t),q_h)_\E + (\dx' w_h(t), q_h)_\E = (\widetilde g(t), q_h)_\E \qquad \forall q_h \in Q_h
\end{align*}
with right hand sides $\widetilde f(t) = \dt \widetilde u_h(t) - \dt u(t)$ and $\widetilde g(t) = \dt \widetilde p_h(t) - \dt p(t)$.
Here we used the properties of the elliptic projection. 
The assertion then follows from the stability estimate of Theorem~\ref{thm:apriorih}.
\end{proof}

Similar as for the stationary problem, sharper estimates can be obtained by using 
the compatibility condition (A6) and a refined error analysis; an example will be given  below. 
For time independent right hand sides, the error estimate simplifies substantially.

\begin{theorem}
Let the assumptions and notations of Theorem~\ref{thm:eeinstat} hold.  
Moreover, assume that $f \equiv \bar f$ and $g \equiv \bar g$, 
and let $(\bar u,\bar p)$ and $(\bar u_h,\bar p_h)$ denote, respectively, 
the solution of the stationary problem and its discrete approximation.
Then
\begin{align*} 
&\|u(t)-u_h(t)\|^2_{L^2} + \|p(t) - p_h(t)\|^2_{L^2}  \\
& \qquad \qquad \qquad \qquad \le \|\bar u -  \bar u_h\|^2_{L^2} + \| \bar p - \bar p_h\|^2_{L^2}  + C''' t e^{-\gamma t}.
\end{align*}
\end{theorem}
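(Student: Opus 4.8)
The plan is to derive the estimate from the already-proven representation of Theorem~\ref{thm:eeinstat}, specializing its two right-hand side contributions to the case of time-independent data. The decisive structural fact I would establish first is that, for $f\equiv\bar f$ and $g\equiv\bar g$, the elliptic projection of the continuous equilibrium coincides with the discrete equilibrium, $\Pi_h(\bar u,\bar p)=(\bar u_h,\bar p_h)$. This is immediate: the continuous equilibrium solves the variational stationary problem \eqref{eq:weak1stat}--\eqref{eq:weak2stat}, so restricting its test functions to $v_h\in V_h\subset H(\div)$ and $q_h\in Q_h\subset L^2$ turns the defining relations of $\Pi_h$ at $(\bar u,\bar p)$ into exactly the equations of Problem~\ref{prob:stath}; uniqueness then gives $\Pi_h(\bar u,\bar p)=(\bar u_h,\bar p_h)$. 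By linearity of $\Pi_h$ it follows that $\widetilde u_h(t)-\bar u_h$ and $\widetilde p_h(t)-\bar p_h$ are the two components of $\Pi_h(u(t)-\bar u,\,p(t)-\bar p)$.

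Next I would upgrade the decay of the transient $(u(t)-\bar u,\,p(t)-\bar p)$ from $L^2\times L^2$, where \eqref{eq:est1} applies, to the graph norm $H(\div)\times L^2$, since the elliptic projection is stable only in the latter. Subtracting \eqref{eq:stat2} from \eqref{eq:sys2} yields the pointwise identity $\dx'(u(t)-\bar u)=-\,b\,\dt p(t)$, whose right-hand side decays exponentially by the second stability estimate \eqref{eq:est2}; together with \eqref{eq:est1} this bounds $\|u(t)-\bar u\|_{H(\div)}^2+\|p(t)-\bar p\|_{L^2}^2$ by $C e^{-\gamma(t-s)}$. The same reasoning applied to $(\dt u,\dt p)$, which for time-independent data solves the homogeneous differentiated system \eqref{eq:weakt1}--\eqref{eq:weakt2}, shows that $\|\dt u(t)\|_{H(\div)}^2+\|\dt p(t)\|_{L^2}^2$ decays at the same exponential rate.

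With these two facts the contributions of Theorem~\ref{thm:eeinstat} are estimated separately. For the projection term I would write $u(t)-\widetilde u_h(t)=(\bar u-\bar u_h)+\big[(u(t)-\bar u)-(\widetilde u_h(t)-\bar u_h)\big]$, recognize the bracket as a component of $(\mathrm{Id}-\Pi_h)(u(t)-\bar u,\,p(t)-\bar p)$, and invoke the boundedness of $\mathrm{Id}-\Pi_h$ on $H(\div)\times L^2$ together with the decay just established; expanding the square then gives $\|\bar u-\bar u_h\|_{L^2}^2+\|\bar p-\bar p_h\|_{L^2}^2$ plus an exponentially small remainder. For the convolution term I would use $\dt u-\dt\widetilde u_h=(\mathrm{Id}-\Pi_h)(\dt u,\dt p)$, so that its squared $L^2$ norm is bounded by $C e^{-\gamma s}$, and the elementary identity $\int_0^t e^{-\gamma(t-s)}e^{-\gamma s}\,ds = t\,e^{-\gamma t}$ produces precisely the factor $C''' t e^{-\gamma t}$ appearing in the claim.

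The main obstacle is the upgrade carried out in the second paragraph: Theorem~\ref{thm:stability} only supplies $L^2$-decay, whereas the stability of the elliptic projection forces one to work in $H(\div)\times L^2$, and it is exactly the divergence identity $\dx'(u-\bar u)=-b\,\dt p$ that closes this gap, with an analogous bootstrap needed on $(\dt u,\dt p)$ for the convolution term. A secondary, purely bookkeeping point is that the cross terms produced when expanding the projection part decay only like $e^{-\gamma t/2}$; collecting all such exponentially decaying remainders into the single envelope $C''' t e^{-\gamma t}$, after enlarging $C'''$ accordingly, completes the argument.
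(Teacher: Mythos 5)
Your proposal follows essentially the same route as the paper's: the paper's entire proof is the single sentence that the result ``follows from the estimate of Theorem~\ref{thm:eeinstat}, the exponential decay estimates of Theorem~\ref{thm:stability} and \ref{thm:stabh}, and the triangle inequality,'' and you supply precisely the details this glosses over. Your identity $\Pi_h(\bar u,\bar p)=(\bar u_h,\bar p_h)$ is the fact the paper records in Remark~\ref{rem:erreststat}, and your derivation of it from conformity and uniqueness of Problem~\ref{prob:stath} is correct. The bootstrap $\dx'(u(t)-\bar u)=-b\,\dt p(t)$, obtained by subtracting \eqref{eq:stat2} from \eqref{eq:sys2}, correctly upgrades the $L^2$ decay of \eqref{eq:est1}--\eqref{eq:est2} to the graph norm in which $\Pi_h$ is stable (for the convolution term the analogous step on $(\dt u,\dt p)$ costs one further time derivative, which is consistent with the regularity implicitly assumed in Theorem~\ref{thm:eeinstat}); and $\int_0^t e^{-\gamma(t-s)}e^{-\gamma s}\,ds = t\,e^{-\gamma t}$ is indeed the source of the factor $t$ in the claim. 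An alternative, closer to the paper's citation of Theorem~\ref{thm:stabh}, is to control the discrete transient $\|u_h(t)-\bar u_h\|$ directly by the discrete stability estimate, which avoids part of the $H(\div)$ bootstrap; your variant through the elliptic projection is equally legitimate.

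The one step that fails as written is your final bookkeeping. The cross terms of size $\|\bar u-\bar u_h\|\cdot O(e^{-\gamma t/2})$ produced by expanding the squares can \emph{not} be collected into the envelope $C'''\,t\,e^{-\gamma t}$ by merely enlarging $C'''$: since $e^{-\gamma t/2}/(t e^{-\gamma t})=e^{\gamma t/2}/t\to\infty$, the ratio is unbounded, and near $t=0$ the quadratic remainders are likewise not dominated by $t e^{-\gamma t}$. With coefficient exactly $1$ in front of $\|\bar u-\bar u_h\|^2+\|\bar p-\bar p_h\|^2$, what your argument actually delivers is a remainder of the form $C\bigl((1+t)e^{-\gamma t}+e^{-\gamma t/2}\bigr)$; to recover the stated form one must either replace $\gamma$ in the remainder by some $\gamma'<\gamma/2$, or apply Young's inequality and accept a factor $(1+\delta)$ on the equilibrium term. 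This imprecision is inherited from the paper itself, whose statement and one-line proof gloss over exactly the same point (the theorem leaves $\gamma$ unspecified, which permits the first repair), but you should state the repair explicitly rather than assert the false absorption.
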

\begin{proof}
The result follows from the estimate of Theorem~\ref{thm:eeinstat}, 
the exponential decay  estimates of Theorem~\ref{thm:stability} and \ref{thm:stabh}, 
and the triangle inequality. 
\end{proof}
On the long run, the discretization error is therefore dominated by the approximation 
of the stationary problem, which can be expected because of convergence to equilibrium.

\section{A mixed finite element method} \label{sec:fem} 

We now give a concrete example of a stable Galerkin approximation based on discretization by finite elements. To fully explain the numerical results presented later on, we derive somewhat improved error estimates for this particular discretization.

\subsection{The mesh and polynomial spaces}

Let $[0,l^e]$ be the interval represented by the edge $e$.  
We denote by $T_h(e) = \{T\}$ a uniform mesh of $e$ with subintervals $T$ of length $h^e$. 
The global mesh is then defined as $T_h(\E) = \{T_h(e) : e \in \E\}$, 
and the global mesh size is denoted by $h=\max_e h^e$. 
We denote the spaces of piecewise polyonomials on $T_h(\E)$ by
\begin{align*}
 P_k(T_h(\E)) &= \{v \in L^2(\E) : v|_e \in P_k(T_h(e)), \ e \in \E\} 
\end{align*}
where $P_k(T_h(e)) = \{v \in L^2(e) : v|_T \in P_k(T), \ T \in T_h(e)\}$ and $P_k(T)$ 
is the space of polynomials of degree $\le k$ on the subinterval $T$. 
Note that $P_k(T_h(\E)) \subset L^2(\E)$, which is easy to see, 
but in general $P_k(T_h(\E)) \not\subset H^1(\E)$. 

\subsection{The mixed finite element approximation}

As spaces $V_h$ and $Q_h$ for the Galerkin approximation presented in the previous sections, 
we now consider
\begin{align} \label{eq:spaces}
V_h = P_{1}(T_h(\E)) \cap H(\div)
\quad \text{and} \quad 
Q_h = P_{0}(T_h(\E)). 
\end{align}
Corresponding higher order approximations could be utilized as well. 
This choice of spaces can be shown to satisfy the required compatibility conditions.
\begin{lemma}
The spaces $V_h$, $Q_h$ defined above satisfy the assumptions (A5)--(A7). 
\end{lemma}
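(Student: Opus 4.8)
The plan is to verify the three compatibility conditions (A5)--(A7) directly for the spaces $V_h = P_1(T_h(\E)) \cap H(\div)$ and $Q_h = P_0(T_h(\E))$ defined in \eqref{eq:spaces}. Condition (A5) is essentially immediate: both spaces are finite dimensional because they consist of piecewise polynomials of bounded degree on a finite mesh, and by construction $V_h \subset H(\div)$ and $Q_h \subset L^2(\E)$. The bulk of the work lies in verifying the two structural conditions (A6) $\dx' V_h = Q_h$ and (A7) $H^0(\div) \subset V_h$.

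For (A6), the key observation is that the broken derivative maps continuous piecewise linear functions on each subinterval to piecewise constants. First I would show the inclusion $\dx' V_h \subset Q_h$: any $v_h \in V_h$ is piecewise linear on each $T \in T_h(e)$, so $\dx v_h|_T$ is a constant, whence $\dx' v_h \in P_0(T_h(\E)) = Q_h$. For the reverse inclusion $Q_h \subset \dx' V_h$, given any piecewise constant $q_h \in Q_h$ I would construct a preimage by integrating edgewise, setting $v_h^e(x) = \int_0^x q_h^e(s)\,ds$ on each edge, which produces a continuous piecewise linear function with $\dx' v_h = q_h$. This candidate lies in $H^1(\E)$, but to land in $V_h = P_1(T_h(\E)) \cap H(\div)$ it must additionally satisfy the conservation condition \eqref{eq:sys4} at interior vertices. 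The correction argument from the proof of Lemma~\ref{lem:infsup} applies here: I would add a piecewise constant conservative function $u_2 \in H^0(\div)$ supplied by Lemma~\ref{cor:graph} so that $u_1 + u_2$ becomes conservative while leaving the broken derivative unchanged, since $\dx' u_2 = 0$.

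Condition (A7) requires $H^0(\div) \subset V_h$. Here the point is that $H^0(\div)$ consists of functions that are constant on each edge and conservative at junctions; such a function is a globally piecewise linear (indeed piecewise constant) function with zero broken derivative, so it automatically lies in $P_1(T_h(\E))$ on each subinterval and, being already conservative, lies in $H(\div)$. Thus $H^0(\div) \subset P_1(T_h(\E)) \cap H(\div) = V_h$ with essentially no computation.

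The main obstacle I expect is the conservation-condition bookkeeping in the reverse inclusion of (A6): ensuring that the edgewise antiderivative can be corrected to a conservative function of $H(\div)$ without disturbing $\dx' v_h = q_h$. This is exactly where the graph-theoretic input of Lemma~\ref{cor:graph} is indispensable, and I would lean on it rather than re-deriving the existence of the constant-flux correction from scratch. The remaining verifications are routine and follow from the pointwise structure of the polynomial spaces.
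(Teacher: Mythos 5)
Your proof is correct and takes essentially the same route as the paper: (A5) and (A7) are immediate from the piecewise-polynomial structure, $\dx' V_h \subset Q_h$ is clear, and the surjectivity $\dx' V_h \supset Q_h$ is obtained by edgewise integration followed by the constant-flux correction of Lemma~\ref{cor:graph}, which is exactly the paper's reference back to the construction in the proof of Lemma~\ref{lem:infsup}. One small mislabel: the correction $u_2$ is edgewise constant with $\dx' u_2 = 0$ but is in general \emph{not} itself conservative (its nodal flux mismatches are prescribed precisely to cancel those of $u_1$), so $u_2 \notin H^0(\div)$; only the sum $u_1 + u_2$ lies in $H(\div)$, which is all the argument needs.
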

\begin{proof}
$V_h$, $Q_h$ are finite dimensional and clearly $\dx' V_h \subset Q_h$. 
Since functions in $H^0(\div)$ are constant on each edge $e$, we also obtain   
$H^0(\div) \subset V_h$. To see that $\dx' V_h \supset Q_h$, we have to provide for any $q_h \in Q_h$ 
a function $v_h \in V_h$ with $\dx' v_h = q_h$. This can be achieved with the same 
construction as in the proof of Lemma~\ref{lem:infsup}.
\end{proof}

As a consequence, all stability results, the a-priori bounds, and error estimates of the previous sections apply to the Galerkin approximations based on these finite element spaces. 
This will be illustrated by numerical results in the next section. 
To obtain quantitative error estimates, we will make use of the following interpolation error results. 
\begin{lemma}[Approximation] \label{lem:approx}
Let $V_h$, $Q_h$ be chosen as above. 
Then there exist generalized interpolation operators $\Pi_{Q_h} : L^2(\E) \to Q_h$ 
and $\Pi_{V_h} : H(\div) \to V_h$ such that 
\begin{align}
\dx' \Pi_{V_h} v = \Pi_{Q_h} \dx' v \qquad \text{for all } v \in H(\div). \label{eq:commute}
\end{align}
In addition, the following interpolation error estimates hold:
\begin{align*}
\|q - \Pi_{Q_h} q\|_{L^2(\E)} &\le C h^m \|q\|_{H^{m}(\E)}, && 0 \le m \le 1\\
\|v - \Pi_{V_h} v\|_{L^2(\E)} &\le C h^{m+1} \|v\|_{H^{m+1}(\E)}, && 0 \le m \le 1\\
\|v - \Pi_{V_h} v\|_{H(\div)} &\le C h^{m} \|v\|_{H^{m+1}(\E)}, && 0 \le m \le 1. 
\end{align*}
\end{lemma}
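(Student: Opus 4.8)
The plan is to construct the two interpolation operators explicitly on each edge using the classical one-dimensional Raviart--Thomas projection and the $L^2$-orthogonal projection, and then to verify that the edgewise construction is compatible with the conformity requirements $V_h \subset H(\div)$ and $Q_h \subset L^2$. First I would define $\Pi_{Q_h}$ as the $L^2$-orthogonal projection onto $Q_h = P_0(T_h(\E))$, acting elementwise: on each subinterval $T$ it returns the mean value of $q$ over $T$. This is well-defined on all of $L^2(\E)$, needs no continuity, and the standard Bramble--Hilbert argument on a reference element, combined with scaling by $h^e$, gives the estimate $\|q - \Pi_{Q_h} q\|_{L^2(\E)} \le C h^m \|q\|_{H^m(\E)}$ for $0 \le m \le 1$ edge by edge, summed over $\E$.

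Next I would define $\Pi_{V_h}$ by the lowest-order Raviart--Thomas interpolation, which in one dimension is simply determined by prescribing the nodal values of $v$ at the endpoints of each subinterval $T$. The key commuting property \eqref{eq:commute} is then verified locally: on each $T$ the fundamental theorem of calculus gives $\int_T \dx v = v(\text{right}) - v(\text{left})$, and since the RT interpolant matches these endpoint values while being affine, $\dx \Pi_{V_h} v$ equals the average of $\dx v$ on $T$, which is precisely $\Pi_{Q_h} \dx v$. The approximation estimates for $\|v - \Pi_{V_h} v\|_{L^2(\E)}$ and $\|v - \Pi_{V_h} v\|_{H(\div)}$ again follow from Bramble--Hilbert and scaling, with the $H(\div)$-estimate exploiting the commuting diagram so that $\|\dx'(v - \Pi_{V_h} v)\|_{L^2} = \|\dx' v - \Pi_{Q_h} \dx' v\|_{L^2}$ reduces to the already-established $Q_h$ estimate applied to $\dx' v$.

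The main obstacle, and the point that genuinely uses the network structure rather than a single interval, is to check that the edgewise RT interpolant actually lands in $V_h = P_1(T_h(\E)) \cap H(\div)$, i.e. that it respects the flux conservation condition \eqref{eq:sys4} at interior vertices. The plan here is to observe that the RT interpolant reproduces the nodal values $v^e(v)$ exactly at each vertex $v$, so that $\sum_{e \in \E(v)} n^e(v) (\Pi_{V_h} v)^e(v) = \sum_{e \in \E(v)} n^e(v) v^e(v) = 0$ holds automatically whenever $v \in H(\div)$. Thus conservativity of $v$ at junctions is transferred verbatim to $\Pi_{V_h} v$, and no additional graph-theoretic correction of the kind used in Lemma~\ref{lem:infsup} is needed for the interpolation operator itself.

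Once these three points are in place, the remaining estimates are routine: I would assemble the global bounds by summing the local contributions over all edges $e \in \E$, using that the mesh is uniform on each edge with size $h^e \le h$, and invoke the broken Sobolev norm $\|\cdot\|_{H^{m+1}(\E)}$ as the sum of edgewise seminorms. I expect the verification of the conformity of $\Pi_{V_h}$ into $H(\div)$ to be the only step requiring care, precisely because it is where the local interpolation must be reconciled with the global coupling conditions built into the definition of the space $V_h$; the interpolation error bounds themselves are entirely standard one-dimensional finite element estimates applied piecewise.
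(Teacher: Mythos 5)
Your proposal is correct and follows essentially the same route as the paper, whose proof consists of a one-line citation to the standard construction: local interpolation operators (the one-dimensional Raviart--Thomas/nodal interpolant and the elementwise $L^2$-projection) pieced together over the subintervals and analyzed by the usual scaling arguments of Boffi--Brezzi--Fortin. Your explicit verification that nodal interpolation preserves the flux conservation condition at junctions---so that $\Pi_{V_h} v \in H(\div)$---is precisely the network-specific detail the paper's phrase ``padding together local operators'' leaves implicit, and you handle it correctly.
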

\begin{proof}
The interpolation operators are obtained by padding together local operators on every subinterval $T$
which are constructed and analyzed with the usual arguments \cite{BoffiBrezziFortin13}. 
\end{proof}
The \emph{commuting diagram property} \eqref{eq:commute} 
will be important for deriving improved estimates. 
From the local construction of the interpolation operators, 
it is clear that the error estimates can be localized 
which allows to obtain sharper estimates for adapted meshes. 

\subsection{Error estimates}

We now summarize the error estimates for the mixed finite element approximation presented above. 
Taking into account the compatibility condition (A6) and the structure of the approximation spaces, 
we also comment on improved error bounds that do not directly follow from the abstract results.

\medskip 

Let us start with the stationary problem:
We denote by $(\bar u, \bar p)$ and $(\bar u_h,\bar p_h)$ the solution of the system~\eqref{eq:stat1}--\eqref{eq:stat5} and 
its Galerkin approximation stated in Problem~\ref{prob:stath}. 

\begin{theorem}[Error estimate for the stationary problem] $ $\\
Let (A1)--(A4) hold and let $V_h$ and $Q_h$ be chosen as above. 
Then for $0 \le m \le 1$ we have
\begin{align*}
\|\bar u - \bar u_h\|_{H(\div)} + \|\bar p - \bar p_h\|_{L^2} \le C h^{m} \big( \|\bar u\|_{H^{m+1}(\E)} + \|\bar p\|_{H^m(\E)}\big) ,
\end{align*}
provided that $\bar u$ and $\bar p$ are sufficiently smooth. 
The constant $C$ only depends on the network geometry and topology, and on the bounds for the coefficients.
\end{theorem}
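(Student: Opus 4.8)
The plan is to combine the abstract Galerkin error estimate of Theorem~\ref{thm:eestat} with the concrete interpolation error bounds of Lemma~\ref{lem:approx}. Theorem~\ref{thm:eestat} already provides the quasi-optimal estimate
\begin{align*}
\|\bar u - \bar u_h\|_{H(\div)} + \|\bar p - \bar p_h\|_{L^2}
\le C \big( \inf_{\bar v_h \in V_h} \|\bar u - \bar v_h\|_{H(\div)} + \inf_{\bar q_h \in Q_h} \|\bar p - \bar q_h\|_{L^2} \big),
\end{align*}
so it suffices to bound each best-approximation error by choosing convenient interpolants. First I would take $\bar v_h = \Pi_{V_h} \bar u \in V_h$ and $\bar q_h = \Pi_{Q_h} \bar p \in Q_h$, which reduces the infima to genuine interpolation errors.

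For the flux term, the third estimate of Lemma~\ref{lem:approx} gives directly
\begin{align*}
\|\bar u - \Pi_{V_h} \bar u\|_{H(\div)} \le C h^m \|\bar u\|_{H^{m+1}(\E)}, \qquad 0 \le m \le 1.
\end{align*}
For the pressure term, the first estimate of Lemma~\ref{lem:approx} yields
\begin{align*}
\|\bar p - \Pi_{Q_h} \bar p\|_{L^2(\E)} \le C h^m \|\bar p\|_{H^m(\E)}, \qquad 0 \le m \le 1.
\end{align*}
Adding the two bounds and absorbing the constants produces exactly the claimed rate $h^m$ against $\|\bar u\|_{H^{m+1}(\E)} + \|\bar p\|_{H^m(\E)}$. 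Since both interpolation operators are constructed from purely local operators on each subinterval $T$, and the constant in Theorem~\ref{thm:eestat} depends only on $\alpha$, $\beta$, and the coefficient bounds (which in turn depend only on the network geometry, topology, and (A4)), the final constant has the asserted dependence.

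The routine part is just assembling the pieces; the only genuine point requiring care is the regularity bookkeeping. The interpolation estimates in Lemma~\ref{lem:approx} are stated on the broken spaces $H^{m+1}(\E)$ and $H^m(\E)$, so I must ensure the solution $(\bar u, \bar p)$ is smooth enough edge-by-edge for these norms to be finite; this is precisely the \emph{provided that $\bar u$ and $\bar p$ are sufficiently smooth} caveat in the statement, and no cross-junction regularity is needed because the interpolants are built locally while the global conformity $\Pi_{V_h} \bar u \in V_h \subset H(\div)$ is guaranteed by the commuting diagram property \eqref{eq:commute} together with (A6)--(A7). I would note that this argument does not yet exploit the commuting diagram property to its full strength; the improved (for instance, superconvergent or higher-order $\|\bar u - \bar u_h\|_{L^2}$) bounds alluded to in the surrounding remarks require a separate duality or Aubin--Nitsche type argument, which is not needed for the stated estimate.
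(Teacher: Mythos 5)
Your proposal is correct and follows exactly the paper's route: the paper's proof is the one-line observation that the estimate follows directly from Theorem~\ref{thm:eestat} combined with Lemma~\ref{lem:approx}, which is precisely your assembly of the quasi-optimality bound with the interpolants $\Pi_{V_h}\bar u$ and $\Pi_{Q_h}\bar p$. Your additional remarks on regularity bookkeeping and on not needing the commuting diagram property for this (non-improved) estimate are accurate elaborations of what the paper leaves implicit.
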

\begin{proof}
The estimate follows directly from Theorem~\ref{thm:eestat} and Lemma~\ref{lem:approx}. 
\end{proof}

\begin{remark} \label{rem:erreststat}
Using the condition $\dx' V_h = Q_h$ and the properties of the interpolation operators, 
one can derive the improved estimates
\begin{align*}  
\|\bar u - \bar u_h\|_{L^2} + \|\Pi_{Q_h} \bar p - \bar p_h\|_{L^2} \le C h^{m+1} \|\bar u\|_{H^{m+1}(\E)}
\end{align*}
for $0 \le m \le 1$ and $(\bar u,\bar p)$ sufficiently smooth.
We refer to  \cite[Ch~1]{Boffi08} or \cite[Ch~5]{BoffiBrezziFortin13} for details.
Note that $(\bar u_h,\bar p_h)=\Pi_h(\bar u,\bar p)$, and therefore these estimates also hold for the elliptic projection. 
For smooth solutions, we can thus obtain an error of order $O(h^{2})$. 
\end{remark}

\bigskip 

We now turn to the discretization of the instationary problem:
Let $(u,p)$ denote the solution of \eqref{eq:sys1}--\eqref{eq:sys6} 
and $(u_h,p_h)$ be the one of Problem~\ref{prob:instath}.  
We then have

\begin{theorem}[Error estimate for the instationary problem] $ $\\
Let (A1)--(A4) hold and $V_h$ and $Q_h$ by chosen as above.
Then for $0 \le m \le 1$ and $t \ge 0$
\begin{align*}
&\|u(t) - u_h(t)\|_{L^2} + \|p(t) - p_h(t)\|_{L^2}  \\
&\qquad \le C h^m \big( \|u(t)\|_{H^{m+1}(\E)} + \|p(t)\|_{H^{m}(\E)} \\
& \qquad \qquad \qquad + 
t \sup_{0 \le s \le t} e^{-\gamma (t-s)/2}( \|\dt u(s)\|_{H^{m+1}(\E)} + \|\dt p(s)\|_{H^m(\E)} ) \big),
\end{align*}
provided the solution $(u,p)$ is sufficiently smooth. 
The constant $C$ again only depends on the network geometry and topology, and the bounds for the coefficients.
\end{theorem}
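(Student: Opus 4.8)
The plan is to combine the abstract error estimate of Theorem~\ref{thm:eeinstat} with the improved interpolation and elliptic-projection bounds collected in Remark~\ref{rem:erreststat} and Lemma~\ref{lem:approx}. Recall that Theorem~\ref{thm:eeinstat} controls the full error by two contributions: the elliptic-projection error $\|u(t) - \widetilde u_h(t)\|_{L^2} + \|p(t) - \widetilde p_h(t)\|_{L^2}$ evaluated at the current time, plus a time-integrated term involving the projection error of the time derivatives $\dt u$ and $\dt p$, weighted by the decaying kernel $e^{-\gamma(t-s)}$. The goal is to estimate each of these two pieces by the right-hand side of the claimed bound.

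First I would treat the projection error at time $t$. Since $(\widetilde u_h(t), \widetilde p_h(t)) = \Pi_h(u(t), p(t))$, Remark~\ref{rem:erreststat} applied at fixed $t$ gives the $O(h^{m+1})$ bound on $\|u(t) - \widetilde u_h(t)\|_{L^2}$ and on $\|\Pi_{Q_h} p(t) - \widetilde p_h(t)\|_{L^2}$; combining the latter with the interpolation estimate $\|p(t) - \Pi_{Q_h} p(t)\|_{L^2} \le C h^m \|p(t)\|_{H^m(\E)}$ from Lemma~\ref{lem:approx} and the triangle inequality yields the first, non-integral part of the claimed bound, namely $C h^m(\|u(t)\|_{H^{m+1}(\E)} + \|p(t)\|_{H^m(\E)})$. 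The slightly weaker power $h^m$ (rather than $h^{m+1}$) for $u$ is harmless since we only need an upper bound.

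Next I would handle the integral term. Because the elliptic projection $\Pi_h$ is linear and time-independent, it commutes with $\dt$, so $\dt \widetilde u_h = \Pi_h$-projection of $\dt u$ and likewise for $p$. Hence the same interpolation and elliptic-projection estimates apply to $\dt u(s)$ and $\dt p(s)$, giving a pointwise-in-$s$ bound of order $h^m(\|\dt u(s)\|_{H^{m+1}(\E)} + \|\dt p(s)\|_{H^m(\E)})$ on the integrand's projection errors. I would then pull this $h^m$ prefactor out of the integral and estimate
\begin{align*}
\int_0^t e^{-\gamma(t-s)} \varphi(s)^2 \, ds
\le \Big( \sup_{0 \le s \le t} e^{-\gamma(t-s)/2} \varphi(s) \Big)^2 \int_0^t e^{-\gamma(t-s)} \, ds,
\end{align*}
where $\varphi(s) = \|\dt u(s)\|_{H^{m+1}(\E)} + \|\dt p(s)\|_{H^m(\E)}$; a crude bound $\int_0^t e^{-\gamma(t-s)}\,ds \le t$ (or the sharper $1/\gamma$) then produces the factor $t \sup_s e^{-\gamma(t-s)/2}(\cdots)$ appearing in the statement. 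Taking square roots where necessary to pass from the squared $L^2$ quantities of Theorem~\ref{thm:eeinstat} to the norms in the present statement, and absorbing all geometry- and coefficient-dependent constants into a single $C$, completes the argument.

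The main obstacle I anticipate is bookkeeping the split between squared norms and plain norms: Theorem~\ref{thm:eeinstat} is stated with squared $L^2$ norms under the integral, whereas the target estimate is in unsquared norms, so the weighting $e^{-\gamma(t-s)/2}$ (half the decay rate) must be tracked carefully when moving the supremum outside the integral. The only genuinely structural point to verify is that $\Pi_h$ commutes with time differentiation, which is immediate from its definition as a linear, time-independent projection; everything else is a direct assembly of previously established estimates and is routine.
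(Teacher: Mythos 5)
Your proposal is correct and takes essentially the same route as the paper, whose entire proof is the one-line observation that the estimate follows from Theorem~\ref{thm:eeinstat} combined with Lemma~\ref{lem:approx} (your invocation of the sharper bounds of Remark~\ref{rem:erreststat} for the $u$-part is a harmless strengthening, since the plain quasi-optimality of the elliptic projection together with Lemma~\ref{lem:approx} already yields the needed $O(h^m)$ bound). One small slip worth fixing: your displayed intermediate inequality $\int_0^t e^{-\gamma(t-s)}\varphi(s)^2\,ds \le \big(\sup_{0\le s\le t} e^{-\gamma(t-s)/2}\varphi(s)\big)^2 \int_0^t e^{-\gamma(t-s)}\,ds$ is false as written because the exponential is counted twice (try $\varphi(s)=e^{\gamma(t-s)/2}$); the correct step is to write $e^{-\gamma(t-s)}\varphi(s)^2 = \big(e^{-\gamma(t-s)/2}\varphi(s)\big)^2 \le \big(\sup_{0\le r\le t} e^{-\gamma(t-r)/2}\varphi(r)\big)^2$ pointwise and then integrate the constant over $[0,t]$, which produces exactly your final bound $t\,\big(\sup_{0\le s\le t} e^{-\gamma(t-s)/2}\varphi(s)\big)^2$, so your conclusion stands unchanged.
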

\begin{proof}
The estimate is obtained directly from Theorem~\ref{thm:eeinstat} and Lemma~\ref{lem:approx}. 
\end{proof}

\begin{remark} \label{rem:errestinstat}
Similar as for the stationary problem, one can obtain sharper estimates by employing 
the compatibility condition (A6) and the improved estimates for the elliptic projection
given in Remark~\ref{rem:erreststat}. 
Assume for simplicity that $b \in P_0(T_h(\E))$.
Then 
\begin{align*}
&\|u(t) - u_h(t)\|_{L^2} + \|\Pi_{Q_h} p(t) - p_h(t)\|_{L^2}  \\
&\qquad \le C h^{m+1} \big( \|u(t)\|_{H^{m+1}(\E)} + t \sup_{0 \le s \le t} e^{-\gamma (t-s)/2 } \|\dt u(s)\|_{H^{m+1}(\E)} \big)
\end{align*}
for all $0 \le m \le 1$, provided that the solution $(u,p)$ is sufficiently smooth. 
This result is derived by a careful estimate of the right hand sides $\tilde f(t)$ and $\tilde g(t)$ 
arising in the proof of Theorem~\ref{thm:eeinstat}, and using the improved estimates for the elliptic projection.
For sufficiently smooth solution, the error of the semi-discretization thus is of order $O(h^{2})$.
\end{remark}

\section{Numerical tests} \label{sec:num}

We nowillustrate our theoretical findings with some numerical results. 
As a spatial discretization, we use the mixed finite element approximation with $P_1-P_0$ elements outlined above.
For the time integration, we employ an implicit one-step $\theta$-scheme, which can be shown to yield a uniformly exponentially 
stable full discretization; we refer to \cite{EggerKugler15} for details. 
The time step is chosen so small, such that errors introduced by the time 
discretization can be neglected in all our results.

\subsection{Model problem}

%

For our tests we consider the network displayed in Figure~\ref{fig:network}.

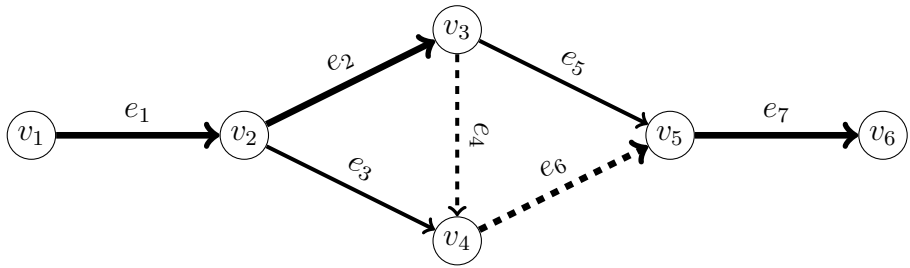
\begin{figure}[ht!]
\begin{center}
  \begin{tikzpicture}[scale=.7]
  \node[circle,draw,inner sep=2pt] (v1) at (0,2) {$v_1$};
  \node[circle,draw,inner sep=2pt] (v2) at (4,2) {$v_2$};
  \node[circle,draw,inner sep=2pt] (v3) at (8,4) {$v_3$};
  \node[circle,draw,inner sep=2pt] (v4) at (8,0) {$v_4$};
  \node[circle,draw,inner sep=2pt] (v5) at (12,2) {$v_5$};
  \node[circle,draw,inner sep=2pt] (v6) at (16,2) {$v_6$};
  \draw[->,thick,line width=2.5pt] (v1) -- node[above,sloped] {$e_1$} ++ (v2);
  \draw[->,thick,line width=2.5pt] (v2) -- node[above,sloped] {$e_2$} ++ (v3);
  \draw[->,thick,line width=1.5pt] (v2) -- node[above,sloped] {$e_3$} ++ (v4);
  \draw[->,thick,line width=1.5pt,style=dashed] (v3) -- node[above,sloped] {$e_4$} ++ (v4);
  \draw[->,thick,line width=1.5pt] (v3) -- node[above,sloped] {$e_5$} ++ (v5);
  \draw[->,thick,line width=2.5pt,style=dashed] (v4) -- node[above,sloped] {$e_6$} ++ (v5);
  \draw[->,thick,line width=2.5pt] (v5) -- node[above,sloped] {$e_7$} ++ (v6);
  \end{tikzpicture}
  \end{center}
 \caption{Network used for numerical tests. 
A spanning tree es obtained by removing the edges marked with dashed lines.
The thickness of the lines corresponds to the diameter of the pipes.\label{fig:network}}
\end{figure}

\noindent
The incidence matrix is given here by 
\begin{align*}
D=
\left(\begin{array}{rrrrrrr}
-1 &  0 &  0 &  0 &  0 &  0 &  0\\
 1 & -1 & -1 &  0 &  0 &  0 &  0\\
 0 &  1 &  0 & -1 & -1 &  0 &  0\\
 0 &  0 &  0 &  0 &  1 &  1 & -1\\
 0 &  0 &  1 &  1 &  0 & -1 &  0\\
 0 &  0 &  0 &  0 &  0 &  0 &  1
\end{array}\right).
\end{align*}
A regular subblock is obtained by removing the first line and the fourth and sixth column,
which amounts to the incidence matrix of the spanning tree with the root vertex removed; 
compare to Remark \ref{rem:graph}.
%
The pipes are chosen to be of unit length, i.e., 
\begin{align*}
 \l=(l_1,\ldots,l_7)=\begin{pmatrix}1& 1& 1& 1& 1& 1& 1\end{pmatrix}.
\end{align*}
The model parameters $a,b,c$ are constant along every pipe with values
\begin{align*}
 &a=\alpha a_0 \qquad \text{with} \qquad a_0=\begin{pmatrix}0.5& 0.5& 4& 4& 4& 0.5& 0.5\end{pmatrix},\\
&b=\begin{pmatrix}4& 4& 1& 1& 1& 4& 4\end{pmatrix} \qquad \text{and} \qquad c=\begin{pmatrix}0.25& 0.25& 1& 1& 1& 0.25& 0.25\end{pmatrix}.
\end{align*}
This amounts to pipes $e_1,e_2,e_6,e_7$ having twice the diameter as the pipes $e_3,e_4,e_5$; see Figure~\ref{fig:network}.
The factor $\alpha$ allows us to adjust the magnitude of the damping in all pipes simultaneously and to investigate 
the dependence of the results on the size of the damping.

\subsection{Estimates for the Poincar\'e constant}

In a first sequence of tests, we investigate the dependence of the constant $C_P$ in
the generalized Poincar\'e inequality 
\begin{align} \label{eq:poincare3}
 \|c^{1/2} u\|_{L^2}^2 \le C_P^2 \big( \|b^{-1/2} \dx' u\|_{L^2}^2 + \|a^{1/2} \Pi^0 u\|_{L^2}^2 \big),
\end{align}
stated in Lemma~\ref{lem:poincare} on the damping factor $\alpha$.
This estimate plays the key role for the decay estimates given in Theorem~\ref{thm:stability} and \ref{thm:stabh} 
and the constant $C_P$ effectively determines the value of the decay rate $\gamma$.
For a single pipe, the Poincar\'e constant $C_P$ can be shown to behave like $C_P^2 \approx \min\{1,1/\alpha\}$; 
compare with \cite[Lemma~A.2]{EggerKugler15}.
We would however expect a similar behaviour also for the simple network considered here.
%
The optimal value for constant $C_P$ in the estimate \eqref{eq:poincare3} is given by the Rayleigh quotient
\begin{align} \label{eq:rayleigh}
C_P^2 = \max_{u \in H(\div)} \frac{\|c^{1/2} u\|^2_{L^2}}{\|b^{-1/2} \dx' u\|_{L^2}^2 + \|a^{1/2} \Pi^0 u\|_{L^2}^2}.
\end{align}
Hence $C_P^2$ ammounts to the largest eigenvalue of the generalized eigenvalue problem 
\begin{align} \label{eq:gevp}
 C u=\lambda (B+A_0) u
\end{align}
with operators $A$, $B$ and $C$ defined by $(A_0 u,v) = (a \Pi^0 u, \Pi^0 v)_\E$, $( B u, v) = (b^{-1} \dx' u, \dx' v)_\E$, and $( C u,v ) = (c u, v)_\E$ for all $u,v \in H(\div)$. 
As before, $\Pi^0 : H(\div) \to H^0(\div)$ 
denotes the projection onto piecewise constant fluxes defined in \eqref{eq:pi0}.

A generalized algebraic eigenvalue problem of similar structure is obtained after discretization.
The largest eigenvalue then corresponds to the discrete Poincar\'e constant
\begin{align} \label{eq:rayleighh}
C_{P,h}^2 = \max_{u_h \in V_h} \frac{\|c^{1/2} u_h\|^2_{L^2}}{\|b^{-1/2} \dx' u_h\|_{L^2}^2 + \|a^{1/2} \Pi^0 u_h\|_{L^2}^2}.
\end{align}
Since we use a conforming discretization $V_h \subset H(\div)$, we clearly get $C_{P_h}^2 \le C_P^2$,   
but by standard estimates for the approximation of elliptic eigenvalue problems \cite{Boffi10}, 
one can expect fast convergence of $C_{P,h}^2$ towards $C_p^2$.
In Table \ref{table:poincareconst} we present the maximal discrete eigenvalues $C_{P,h}^2$ 
for our test problem obtained for different values of the damping parameter $\alpha$ and for a sequence of uniform refinements of the spatial mesh.

\begin{table}[ht!]
\renewcommand{\arraystretch}{1.1}
\begin{center}
\small
\begin{tabular}{l||c|c|c|c|c|c}
$h \setminus \alpha$  
        & $10^{-3}$ & $10^{-2}$ & $10^{-1}$ & $10^{0}$ & $10^{1}$ & $10^{2}$ \\
\hline
\hline
$0.1   $ & $338.53$ & $33.853$ & $3.3853$ & $0.3385$ & $1.0049$ & $1.0049$ \\ 
\hline
$0.05  $ & $338.53$ & $33.853$ & $3.3853$ & $0.3385$ & $1.0111$ & $1.0111$ \\ 
\hline
$0.025 $ & $338.53$ & $33.853$ & $3.3853$ & $0.3385$ & $1.0127$ & $1.0127$ \\
\hline
$0.0125$ & $338.53$ & $33.853$ & $3.3853$ & $0.3385$ & $1.0132$ & $1.0132$ 
\end{tabular}
\medskip
\caption{Optimal discrete  Poincar\'e constants $C_{P,h}^2$ defined by \eqref{eq:rayleighh} 
depending on the value of the damping parameter $\alpha$ and the mesh sizes $h$. }
\label{table:poincareconst}
\end{center}
\end{table}

As expected, the maximal eigenvalues $C_{p,h}^2$ are monotonically increasing when refining the mesh, 
and they converge fast towards the true eigenvalue $C_P^2$ with $h \to 0$.
As for the single pipe, we observe a dependence $C_P^2 \approx \min\{1,1/\alpha\}$ 
on the size of the damping parameter also for the network problem considered here.

\subsection{Exponential stability}

With the next tests, we would like to illustrate the uniform exponential stability and decay of the 
finite element Galerkin approximations discussed in Section~\ref{sec:fem}. 
As initial conditions, we choose $(u_0,p_0) \equiv (0,1)$, 
which corresponds to a solution of the stationary problem \eqref{eq:stat1}--\eqref{eq:stat5} 
with boundary values $p_0(v_1)=p_0(v_6)=1$. 
For the instationary problem, we set the boundary conditions to
\begin{align*}
 p(v_1,t)=p(v_6,t)=\begin{cases}
                    1-t & 0\le t< 1,\\
                    0   & 1\le t.
                   \end{cases}
\end{align*}
According to our theoretical results, the solution should quickly converge towards 
the steady state $(\bar u,\bar p) \equiv (0,0)$. 
In Table~\ref{table:stabh}, we list the values of the discrete energy 
\begin{align*}
\E_h(t):=\frac{1}{2} \Big( \|c^{1/2} u_h(t)\|_{L^2(\E)}^2+\|b^{1/2} p_h(t)\|_{L^2(\E)}^2\Big),
\end{align*}
which corresponds to the approximation of the total energy of the system.
\begin{table}[ht!]
\renewcommand{\arraystretch}{1.1}
\begin{center}
\small 
\begin{tabular}{l||c|c|c|c|c|c||c}
$h \setminus t$                     
         & $0$    & $4$       & $8$       & $12$      & $16$      & $20$       & $\gamma$ \\
\hline
\hline
$0.1000$ & $9.50$ & $1.71507$ & $0.17791$ & $0.01841$ & $0.00190$ & $0.000197$ & $0.540$ \\
\hline
$0.0500$ & $9.50$ & $1.71540$ & $0.17809$ & $0.01844$ & $0.00191$ & $0.000197$ & $0.540$ \\
\hline
$0.0250$ & $9.50$ & $1.71548$ & $0.17813$ & $0.01845$ & $0.00191$ & $0.000198$ & $0.540$ \\
\hline
$0.0125$ & $9.50$ & $1.71550$ & $0.17815$ & $0.01845$ & $0.00191$ & $0.000198$ & $0.540$ 
\end{tabular}
\medskip
\caption{Decay of the discrete energy $\E_h(t)$ for the test problem with parameter $\alpha=1$.
The parameter $\gamma$ is obtained by least-squares fit to the logarithm of the relation $\E_h(t)=C e^{-\gamma t}$ using the data for $t \ge 4$.
\label{table:stabh}}
\end{center}
\end{table}
As can clearly be seen from the results, the decay rate is more or less independent of the meshsize,
which is in perferct agreement with the proofs of Theorem~\ref{thm:stability} and \ref{thm:stabh}.

In a second series of tests, we investigate the dependence of the decay rate $\gamma$ 
on the size of damping parameter. To do so, we repeat the tests on the finest mesh with $h=0.0125$ 
for different values of $\alpha$. 
The corresponding results are displayed in Table~\ref{table:staba}.

\begin{table}[ht!]
\renewcommand{\arraystretch}{1.1}
\begin{center}
\small 
\begin{tabular}{l||c|c|c|c|c|c||c}
$\alpha \setminus t$                     
          & $0$    & $4$       & $8$       & $12$      & $16$      & $20$      & $\gamma$ \\
\hline
\hline
$10^{-3}$ & $9.50$ & $8.09215$ & $8.01978$ & $7.94957$ & $7.88278$ & $7.81723$ & $0.002$ \\
\hline
$10^{-2}$ & $9.50$ & $7.45415$ & $6.81598$ & $6.24595$ & $5.74328$ & $5.28630$ & $0.020$ \\
\hline
$10^{-1}$ & $9.50$ & $3.31009$ & $1.37764$ & $0.59730$ & $0.26706$ & $0.11968$ & $0.197$ \\
\hline
 $10^{0}$ & $9.50$ & $1.71550$ & $0.17815$ & $0.01845$ & $0.00191$ & $0.00020$ & $0.540$ \\
\hline
 $10^{1}$ & $9.50$ & $6.77561$ & $5.46847$ & $4.47603$ & $3.67318$ & $3.01598$ & $0.048$ \\
\hline
 $10^{2}$ & $9.50$ & $8.63295$ & $8.23205$ & $7.93047$ & $7.67813$ & $7.45659$ & $0.009$ 
\end{tabular}
\medskip
\caption{Decay of the discrete energy $\E_h(t)$ for the test problem depending on the parameter $\alpha$.
The decay rate $\gamma$ is obtained by least-squares fit to the logarithm of the relation $\E_h(t)=C e^{-\gamma t}$ using the data for $t \ge 4$.
\label{table:staba}}
\end{center}
\end{table}

By a careful inspection of the proofs of Theorem~\ref{thm:stability} and \ref{thm:stabh}, one would 
expect a behaviour of the decay rate as $\gamma \approx \min\{\alpha,1/\alpha\}$; see \cite{CoxZuazua94,EggerKugler15}
for detailed estimates concerning a single pipe. One would thus expect a reduction in the decay rate for small and large damping parameter $\alpha$, which is exactly what can be observed in our tests. 

\subsection{Error estimates}

Let us finally also study the convergence of the finite element method towards the solution with respect to the meshsize $h$. 
We take the boundary conditions from the previous example and repeat the 
tests for a sequence of uniformly refined meshes and different damping factors $\alpha$.
We use 
\begin{align*}
 e_h=\max_{0 \le t^n \le T} \|u_h^n-u_{2h}^n\|_{L^2}^2+\|p_h^n-p_{2h}^n\|_{L^2}^2
\end{align*}
as a computably measure for the discretization error.
The resulting convergence results are presented in Table \ref{table:errorestimates}.
%
%
\begin{table}[ht!]
\renewcommand{\arraystretch}{1.1}
\small 
\begin{center}
\begin{tabular}{l||c|c|c|c|c|c||c}
$\alpha \setminus h$
          & $0.1 \cdot 2^{-1}$  & $0.1 \cdot 2^{-2}$  & $0.1 \cdot 2^{-3}$  & $0.1 \cdot 2^{-4}$  & $0.1 \cdot 2^{-5}$  & $0.1 \cdot 2^{-6}$  & rate \\
\hline
\hline
$10^{-3}$ & $0.35940$ & $0.05463$ & $0.01541$ & $0.00410$ & $0.00093$ & $0.00020$ & $2.109$ \\ 
\hline
$10^{-2}$ & $0.22003$ & $0.03773$ & $0.00974$ & $0.00257$ & $0.00059$ & $0.00013$ & $2.109$ \\
\hline
$10^{-1}$ & $0.03134$ & $0.00773$ & $0.00192$ & $0.00048$ & $0.00012$ & $0.00003$ & $2.018$ \\
\hline
 $10^{0}$ & $0.02498$ & $0.00611$ & $0.00153$ & $0.00038$ & $0.00010$ & $0.00002$ & $2.006$ \\
\hline
 $10^{1}$ & $0.05493$ & $0.01426$ & $0.00359$ & $0.00090$ & $0.00022$ & $0.00006$ & $1.991$ \\
\hline
 $10^{2}$ & $0.10155$ & $0.03752$ & $0.01062$ & $0.00274$ & $0.00069$ & $0.00017$ & $1.999$
\end{tabular}
\medskip
\caption{
Convergence of the discrete energy error $e_h$ with respect to the mesh size $h$. 
The rates are estimated by least-squares fit to $\log e_h$ for the last two refinement steps.
}
\label{table:errorestimates}
\end{center}
\end{table}
As predicted by the error analysis for the finite element Galerkin method presented in Section~\ref{sec:fem}, 
we can observe second order convergence for the error independent of the size of the damping parameter.

\section{Discussion} \label{sec:disc} 

In this paper, we investigated a linear damped hyperbolic system defined on a one dimensional network. 
Exponential stability and decay estimates could be derived under generic assumptions on 
the network topology and the coefficients of the problem. 
Our analysis relies on a few basic ingredients: an appropriate choice of function spaces;
a variational characterization of solutions; a Poincar\'e type estimate for the network; 
and careful energy estimates. 
The basic steps of our analysis are generic and allow us to analyse very easily also 
the systematic discretization in space by Galerkin methods. 
The analysis can also be extended to time discretization by certain one-step methods.
All important properties of the evolution system derived on the continuous level can be 
preserved on the semi-discrete and fully discrete level.

While our results cover relatively general network topologies and also non-constant coefficients, 
the case of degenerate damping requires different arguments; 
we refer to \cite{BanksItoWang91,ErvedozaZuazua09,Fabiano01} for details concerning the analysis and numerical approximation
in that case.

The main arguments used in our analysis however seem to be appropriate also for other applications; 
examples can be found in \cite{DagerZuazua06,GoettlichHertySchillen15,LagneseLeugeringSchmidt}.
Also the extension to related semi- and quasilinear problems seems feasible without much difficulty 
by the usual perturbation arguments; see e.g. \cite{GattiPata06,Zuazua88} for some results in this direction.

\section*{Acknowledgements}
The authors are grateful for financial support by the German Research Foundation (DFG) via grants IRTG~1529 and TRR~154,
and by the ``Excellence Initiative'' of the German Federal and State Governments via the Graduate School of Computational Engineering GSC~233 at Technische Universität Darmstadt.


\begin{thebibliography}{10}

\bibitem{BabinVishik83}
A.~V. Babin and M.~I. Vishik.
\newblock Regular attractors of semigroups and evolution equations.
\newblock {\em J. Math. Pures Appl.}, 62:441--491, 1983.

\bibitem{BanksItoWang91}
H.~T. Banks, K.~Ito, and C.~Wang.
\newblock Exponentially stable approximations of weakly damped wave equations.
\newblock In {\em Estimation and Control of Distributed Parameter Systems},
  volume 100 of {\em Internat. Ser. Numer. Math.}, pages 6--33. Birkh\"auser,
  Basel, 1991.

\bibitem{Below88}
J.~V. Below.
\newblock Classical solvability of linear parabolic equations on networks.
\newblock {\em J. Diff. Equat.}, 72:316--337, 1988.

\bibitem{Berge}
C.~Berge.
\newblock {\em Graphs. 2nd rev.}
\newblock North-Holland, Amsterdam, New~York, Oxford, 1985.

\bibitem{Boffi10}
D.~Boffi.
\newblock Finite element approximation of eigenvalue problems.
\newblock {\em Acta Numer.}, 19:1--120, 2010.

\bibitem{Boffi08}
D.~Boffi, F.~Brezzi, L.~F. Demkowicz, R.~G. Dur{\'a}n, R.~S. Falk, and
  M.~Fortin.
\newblock {\em Mixed finite elements, compatibility conditions, and
  applications}, volume 1939 of {\em Lecture Notes in Mathematics}.
\newblock Springer-Verlag, Berlin; Fondazione C.I.M.E., Florence, 2008.

\bibitem{BoffiBrezziFortin13}
D.~Boffi, F.~Brezzi, and M.~Fortin.
\newblock {\em Mixed finite element methods and applications}, volume~44 of
  {\em Springer Series in Computational Mathematics}.
\newblock Springer, Heidelberg, 2013.

\bibitem{Brezzi74}
F.~Brezzi.
\newblock On the existence, uniqueness and approximation of saddle-point
  problems arising from lagrangian multipliers.
\newblock {\em RAIRO Anal. Numer.}, 2:129--151, 1974.

\bibitem{BrouwerGasserHerty11}
J.~Brouwer, I.~Gasser, and M.~Herty.
\newblock Gas pipeline models revisited: Model hierarchies, non-isothermal
  models and simulations of networks.
\newblock {\em Multiscale Model. Simul.}, 9:601--623, 2011.

\bibitem{CowsarDupontWheeler96}
L.~C. Cowsar, T.~F. Dupont, and M.~F. Wheeler.
\newblock A priori estimates for mixed finite element approximations of
  second-order hyperbolic equations with absorbing boundary conditions.
\newblock {\em SIAM J. Numer. Anal.}, 33:492--504, 1996.

\bibitem{CoxZuazua94}
S.~Cox and E.~Zuazua.
\newblock The rate at which energy decays in a damped string.
\newblock {\em Comm. Part. Diff. Equat.}, 19:213--243, 1994.

\bibitem{DagerZuazua06}
R.~D{\'a}ger and E.~Zuazua.
\newblock {\em Wave propagation, observation and control in {$1\text{-}d$}
  flexible multi-structures}, volume~50 of {\em Math\'ematiques \& Applications
  (Berlin) [Mathematics \& Applications]}.
\newblock Springer-Verlag, Berlin, 2006.

\bibitem{DL5}
R.~Dautray and J.-L. Lions.
\newblock {\em Mathematical Analysis and Numerical Methods for Science and
  Technology. {V}ol. 5. Evolution Problems {I}}.
\newblock Springer-Verlag, Berlin, 1992.

\bibitem{EggerKugler15}
H.~Egger and T.~Kugler.
\newblock Uniform exponential stability of {G}alerkin approximations for damped
  wave systems.
\newblock {\em arXive:1511.08341}, 2015.

\bibitem{ErvedozaZuazua09}
S.~Ervedoza and E.~Zuazua.
\newblock Uniformly exponentially stable approximations for a class of damped
  systems.
\newblock {\em J. Math. Pures Appl.}, 91:20--48, 2009.

\bibitem{Evans98}
L.~C. Evans.
\newblock {\em Partial Differential Equations}.
\newblock American Mathematical Society, 1998.

\bibitem{Fabiano01}
R.~H. Fabiano.
\newblock Stability preserving {G}alerkin approximations for a boundary damped
  wave equation.
\newblock {\em Nonlin. Anal.}, 47:4545--4556, 2001.

\bibitem{GattiPata06}
S.~Gatti and V.~Pata.
\newblock A one-dimensional wave equation with nonlinear damping.
\newblock {\em Glasgow Math. J.}, pages 419--430, 2000.

\bibitem{Geveci88}
T.~Geveci.
\newblock On the application of mixed finite element methods to the wave
  equations.
\newblock {\em RAIRO Model. Math. Anal. Numer.}, 22:243--250, 1988.

\bibitem{GoettlichHertySchillen15}
S.~G\"ottlich, M.~Herty, and P.~Schillen.
\newblock Electric transmission lines: Control and numerical discretization.
\newblock {\em Optim. Control Appl. Meth.}, 2015.

\bibitem{GuentherFeldmannTerMaten05}
M.~G\"unther, W.~Feldmann, and J.~ter Maten.
\newblock Modelling and discretization of circuit problems.
\newblock In {\em Handbook of Numerical Analysis. {Vol. XIII}}, pages 523--659.
  Elsevier, 2005.

\bibitem{Lagnese83}
J.~Lagnese.
\newblock Decay of solutions of wave equations in a bounded region with
  boundary dissipation.
\newblock {\em J. Diff. Equat.}, 50:163--182, 1983.

\bibitem{LagneseLeugeringSchmidt}
L.~E. Lagnese, G.~Leugering, and E.~J. P.~G. Schmidt.
\newblock {\em Modeling, Analysis and Control of Dynamic Elastic Multi-Link
  Structures}.
\newblock Systems \& Control: Foundations \& Applications. Springer
  Science+Business Media, New~York, 1994.

\bibitem{MehmetiBelowNicaise}
F.~A. Mehmeti, J.~von Below, and S.~Nicaise, editors.
\newblock {\em Partial differential equations on multistructures}, New~York,
  Basel, 2001. Marcel Dekker Inc.

\bibitem{Mugnolo14}
D.~Mugnolo.
\newblock {\em Semigroup methods for evolution equations on networks}.
\newblock Springer, Cham, 2014.

\bibitem{Pazy83}
A.~Pazy.
\newblock {\em Semigroups of Linear Operators and Applications to Partial
  Differential Equations}.
\newblock Springer-Verlag, New York, 1983.

\bibitem{RauchTaylor74}
J.~Rauch and M.~Taylor.
\newblock Exponential decay of solutions to hyperbolic equations in bounded
  domains.
\newblock {\em Ind. Univ. Math. J.}, 24:79--86, 1974.

\bibitem{RinconCopetti13}
M.~A. Rincon and M.~I.~M. Copetti.
\newblock Numerical analysis for a locally damped wave equation.
\newblock {\em J. Appl. Anal. Comput.}, 3:169--182, 2013.

\bibitem{Tartar07}
L.~Tartar.
\newblock {\em An Introduction to Sobolev Spaces and Interpolation Spaces}.
\newblock Springer, Berlin, Heidelberg, 2007.

\bibitem{TebouZuazua03}
L.~R.~T. Tebou and E.~Zuazua.
\newblock Uniform exponential long time decay for the space semi-discretization
  of a locally damped wave equation via an artificial numerical viscosity.
\newblock {\em Numer. Math.}, 95:563--598, 2003.

\bibitem{Varga}
R.~S. Varga.
\newblock {\em Functional Analysis and Approximation Theory in Numerical
  Analysis}.
\newblock CBMS-NSF Regional Conference Series in Applied Mathematics. SIAM,
  Philadelphia, 1971.

\bibitem{Wheeler73}
M.~F. Wheeler.
\newblock A priori ${L_2}$ error estimates for {G}alerkin approximations to
  parabolic partial differential equations.
\newblock {\em SIAM J. Numer. Anal.}, 10:723--759, 1973.

\bibitem{Zuazua88}
E.~Zuazua.
\newblock Stability and decay for a class of nonlinear hyperbolic problems.
\newblock {\em Asymptotic Anal.}, 1:161--185, 1988.

\bibitem{Zuazua05}
E.~Zuazua.
\newblock Propagation, observation, and control of waves approximated by finite
  difference methods.
\newblock {\em SIAM Rev.}, pages 197--243, 2005.

\end{thebibliography}

\end{document}